\documentclass[11pt]{article}

\pdftrailerid{}
\usepackage[T1]{fontenc}
\usepackage[utf8]{inputenc}
\usepackage{lmodern,amsmath,amssymb,amsthm,mathtools,mathrsfs}
\usepackage[a4paper,margin=1.02in]{geometry}
\usepackage{booktabs,longtable,array,microtype}
\usepackage{xcolor}
\usepackage[colorlinks=true,linkcolor=blue!50!black,
  citecolor=blue!50!black,urlcolor=blue!50!black]{hyperref}
\usepackage{enumitem,needspace}
\setlist[itemize]{leftmargin=1.5em,itemsep=2pt,topsep=4pt}
\setlist[enumerate]{leftmargin=1.8em,itemsep=2pt,topsep=4pt}
\allowdisplaybreaks
\numberwithin{equation}{section}
\newtheorem{theorem}{Theorem}[section]
\newtheorem{proposition}[theorem]{Proposition}
\newtheorem{lemma}[theorem]{Lemma}
\newtheorem{corollary}[theorem]{Corollary}
\theoremstyle{definition}

\theoremstyle{remark}
\newtheorem{remark}[theorem]{Remark}
\newcommand{\R}{\mathbb R}

\newcommand{\Z}{\mathbb Z}
\newcommand{\N}{\mathbb N}
\newcommand{\dd}{\,\mathrm d}
\newcommand{\Res}{\operatorname*{Res}}
\newcommand{\AC}{\operatorname{AC}}
\newcommand{\sgn}{\operatorname{sgn}}
\newcommand{\cP}{\mathcal P}
\newcommand{\cA}{\mathcal A}

\newcommand{\sbinom}[2]{\begin{bmatrix}#1\\#2\end{bmatrix}_{h}}
\hypersetup{
  pdftitle={Shifted poles and chamber cancellation for classical Witten zeta functions},
  pdfauthor={Jonas Matuzas},
  pdfsubject={Shifted poles on the positive real axis, chamber cancellation, and Laurent coefficients of classical Witten zeta functions},
  pdfkeywords={Witten zeta function, shifted pole, root system, Selberg integral,
    chamber cancellation, Laurent coefficient}}
\title{Shifted poles and chamber cancellation\\for classical Witten zeta functions}
\author{Jonas Matuzas\\
\href{mailto:jonas.matuzas@gmail.com}{\nolinkurl{jonas.matuzas@gmail.com}}}
\date{}
\begin{document}
\maketitle
\begin{abstract}
We determine two infinite families of poles on the positive real axis for classical single-variable Witten zeta functions.  In type $A_r$, for $r\geq5$, the point
\[
 q_r^A=\frac{2(r-4)}{r^2+r-4}
\]
is a simple pole except in ranks $12$ and $20$, where it is a double pole.  In type $D_r$, for $r\geq4$, the point
\[
 q_r^D=\frac{r-3}{r(r-1)-1}
\]
is a simple pole except at $D_8$, where it is double.  In root-product normalization, we express the simple residues and the leading coefficients of these three double poles in terms of gamma, trigonometric, and Riemann zeta values.

For $r\geq4$, the functions of types $B_r$ and $C_r$ are holomorphic at $q_r^{BC}=(r-3)/(r^2-1)$ except possibly in ranks $7$ and $11$, where any pole is simple. These pole and holomorphy statements arise from quadratic normal Taylor coefficients. For every fixed higher even normal degree, we also determine exactly when the associated continued $B/C$ chamber sum is nonzero; this auxiliary result does not by itself classify poles of the full Witten function. The proofs combine exhaustive support classifications, explicit integration-by-parts identities, and finite chamber relations. No numerical nonvanishing estimate is used.
\end{abstract}

\noindent\textbf{Generative-AI disclosure and author responsibility.}
OpenAI's ChatGPT 5.6 Pro was used for mathematical exploration, proof auditing,
drafting, and preparation of verification programs. The author is responsible
for the final manuscript.

\medskip
\noindent\textbf{2020 Mathematics Subject Classification.}
Primary 11M41; Secondary 11M32, 17B22, 33C67.

\medskip
\noindent\textbf{Keywords.}
Witten zeta function; shifted pole; root system; Selberg integral;
chamber cancellation; Laurent coefficient.

\section{Introduction}
A candidate pole of a multivariable root-system zeta function may disappear when all exponents are set equal. We determine when this happens for three families of quadratic terms along parabolic faces. In types $A$ and $D$ the candidate values are poles in every rank under consideration. In types $B$ and $C$ most of these candidates are removable.

Let $\Phi$ be an irreducible crystallographic root system of rank $r$, and let $\Psi^+=\Phi^{\vee,+}$ be its set of positive coroots. Put $N=|\Psi^+|$ and write $\beta=\sum_i b_i(\beta)\beta_i$ in the simple-coroot basis. Throughout, $\N=\{1,2,\ldots\}$ and $\zeta$ denotes the Riemann zeta function.  We use the root-product normalization
\begin{equation}\label{eq:def-xi}
 \xi_\Phi(s)=\sum_{m_1,\ldots,m_r\geq1}
 \prod_{\beta\in\Psi^+}\left(\sum_{i=1}^r b_i(\beta)m_i\right)^{-s}.
\end{equation}
For type $A_r$ this is
\begin{equation}\label{eq:xi-A}
 \xi_{A_r}(s)=\sum_{m_1,\ldots,m_r\geq1}
 \prod_{1\leq i\leq j\leq r}(m_i+\cdots+m_j)^{-s}.
\end{equation}
The series is initially defined in its half-plane of absolute convergence; elsewhere $\xi_\Phi$ denotes its meromorphic continuation. The ordinary representation zeta function differs from \eqref{eq:def-xi} by a positive constant raised to the power $s$, so pole locations and orders are unchanged.

The three quadratic candidate values studied here are
\begin{equation}\label{eq:three-q}
 q_r^A=\frac{2(r-4)}{r^2+r-4},\qquad
 q_r^D=\frac{r-3}{r(r-1)-1},\qquad
 q_r^{BC}=\frac{r-3}{r^2-1}.
\end{equation}
The following theorems determine which of these candidates are poles.

\begin{theorem}[Type $A$]\label{thm:main}
For every $r\geq5$, $\xi_{A_r}$ has a pole at $q_r^A$.  The pole is simple for $r\notin\{12,20\}$ and double for $r\in\{12,20\}$.  In the simple cases its residue is the explicit formula \eqref{eq:A-residue}.  At the two exceptional ranks,
\begin{align}
 [(s-2/19)^{-2}]\xi_{A_{12}}(s)
 &=\frac{\Res_{s=2/19}\xi_{A_6}(s)}{228}\,
       \cP_{12,6}^{\mathrm{sum}}(2/19)>0,\label{eq:A12-main}\\
 [(s-1/13)^{-2}]\xi_{A_{20}}(s)
 &=\frac{\Res_{s=1/13}\xi_{A_5}(s)}{3120}\,
       \cP_{20,5}^{\mathrm{sum}}(1/13)<0.\label{eq:A20-main}
\end{align}
Here $[(s-q)^{-2}]f$ denotes the coefficient of $(s-q)^{-2}$ in the Laurent expansion of $f$. The positive face-period sums $\cP_{R,L}^{\mathrm{sum}}$ are defined in \eqref{eq:outer-period} and evaluated in Theorem~\ref{thm:outer-sum}.
\end{theorem}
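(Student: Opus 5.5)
We would argue through the Mellin--Barnes/Laplace integral representation of $\xi_{A_r}$. The cone $\{m_i>0\}$ is decomposed according to faces. Poles of $\xi_\Phi$ on the positive real axis come from parabolic faces: for a sub-root-system $\Psi_L\subset\Psi^+$ of rank $L$, localize near the face where the coordinates orthogonal to $\Psi_L$ are small. The normal Taylor expansion of the remaining factors $\prod_{\beta\notin\Psi_L}(\cdots)^{-s}$ then produces candidate poles at
\[
 s=\frac{L+k}{|\Psi^+|-|\Psi_L^+|},
\]
with $k$ the normal degree. The quadratic candidates $q_r^A$ in \eqref{eq:three-q} correspond to $k=2$ and to Levi faces of type $A_{r-1}$ (or $A_{r-2}$). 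We would verify this by a direct count: with $N=r(r+1)/2$, the equation $2(r-4)/(r^2+r-4)=q$ should match the face data.

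\textbf{Step 1: exhaustive support classification.} We determine every face $F$ (Levi type $A_{L_1}\times\cdots$) and every normal degree $k$ whose candidate value equals $q_r^A$. Solving $(\mathrm{rank}+k)/(\text{normal root count})=2(r-4)/(r^2+r-4)$ is a Diophantine problem in $(r,$ face data$,k)$. Generically only the quadratic codimension-one faces contribute. At $r=12$ the value $2/19$ coincides with the leading pole of an inner $A_6$ subsystem, and at $r=20$ the value $1/13$ coincides with that of an $A_5$ subsystem. These coincidences are what produce $\Res\xi_{A_6}(2/19)$ and $\Res\xi_{A_5}(1/13)$ in \eqref{eq:A12-main}--\eqref{eq:A20-main}. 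We would check the finitely many small $r$ by hand and bound the rest by a size argument.

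\textbf{Step 2: compute the simple residue.} The quadratic Taylor coefficient is a second-order differential operator applied in the normal directions. Integration by parts converts it into a Selberg-type integral over the face, with exponent depending on $s$. This yields \eqref{eq:A-residue} as a product of gamma and trigonometric factors times zeta values, and we then show this expression is nonzero. Nonvanishing should follow from the sign of a single rational polynomial factor $c(r)$, read off after the integration-by-parts identity. We expect $c(r)\neq0$ for all $r\ge5$: the gamma factors are finite and nonzero at $q_r^A$, and the trigonometric factor $\sin(\pi\cdot)$ vanishes only when $q_r^A$ times an integer is integral. Ruling that out is a congruence check on $r^2+r-4$.

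\textbf{Step 3: the exceptional ranks.} At $r=12$ and $r=20$ two pole sources coalesce: the outer face expansion and the inner subsystem pole. The product of two simple singularities gives a double pole, whose leading coefficient is the inner residue times the outer face-period sum $\cP^{\mathrm{sum}}_{R,L}$, with normalizing constants $228=12\cdot19$ and $3120=240\cdot13$ coming from the Jacobian. The signs then follow from the positivity of $\cP^{\mathrm{sum}}$ in Theorem~\ref{thm:outer-sum} together with the known sign of $\Res\xi_{A_L}$ at its leading pole, which we would compute from the same Selberg formula.

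\textbf{Main obstacle.} The hardest step is Step 1 combined with showing that no chamber cancellation kills the residue in type $A$. Several faces can share the value $q$, and their contributions must be summed. We would try to show that in type $A$ the distinct contributing faces are related by the Weyl symmetry $i\mapsto r+1-i$ and add with equal sign, in contrast with the $B/C$ case where they cancel.
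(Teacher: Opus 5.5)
There is a genuine gap, and it sits where you placed the ``main obstacle''. The residue at $q_r^A$ is not one Selberg integral. It is a sum over all $\binom{r-1}{2}$ supports, i.e.\ over all placements $(i,j)$ of the two collapsed pairs among the $r-1$ blocks, and each placement contributes a chamber period $J_{i,j}$. Only the chamber with every single block between the two double blocks is an ordinarily convergent, positive Selberg integral. The others are mixed chambers, in general defined only by meromorphic continuation. They are tied to one another by contour relations whose coefficients $\omega_{i,j}$ contain the factor $(-1)^{j-i-1}$. Already for $r=5$, where all sine factors are positive, one has $\omega_{1,2}=1>0>\omega_{1,3}$.

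So the contributions do not add with a common sign. The involution $i\mapsto r+1-i$ only pairs chambers and gives no control of the total. To prove nonvanishing you need three ingredients.
\begin{enumerate}
\item A closed evaluation of $\Sigma_r=\sum_{i<j}\omega_{i,j}$. The paper obtains $\Sigma_r<0$ from a finite $q$-binomial and difference-operator identity.
\item A way to remove the quadratic insertion from \emph{every} chamber with one common constant: a pointwise divergence identity giving the same $\kappa_n$. This is what lets the relations for uninserted periods transfer to the inserted ones.
\item The centering argument, showing that the shift from original to centered gaps has zero period and that the mixed $ab$ Taylor term has zero period. Otherwise a $\zeta(s-1)^2$ contribution appears.
\end{enumerate}
Your Step~2 ``congruence check'' also points the wrong way. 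The coefficient $\Omega_r$ linking the reference chamber to the others does vanish, precisely when $19q_{12}=2$ and $26q_{20}=2$. That vanishing is the mechanism behind the exceptional ranks, so your argument must detect it rather than rule it out.

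Several identifications are also wrong.
\begin{enumerate}
\item The generic contributing supports have normal complement $A_1\sqcup A_1$: two nonadjacent normal nodes with total Taylor degree $2$. This gives $(|S|-\ell)/N(S)=(r-4)/(\binom{r+1}{2}-2)$. Your formula $(L+k)/(|\Psi^+|-|\Psi_L^+|)$ adds the Taylor degree instead of subtracting it. Neither Levi faces of type $A_{r-1}$ or $A_{r-2}$ nor singleton complements produce $q_r^A$, so the Diophantine problem of Step~1 is posed for the wrong equation.
\item At $r=12,20$ the inner values are not leading poles; those of $\xi_{A_6}$ and $\xi_{A_5}$ lie at $2/7$ and $1/3$. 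Rather, $2/19=q_6^A$ and $1/13=q_5^A$, so the inner residues are the simple residues $R_6>0$ and $R_5<0$ of the same family. This recursion is what fixes the signs of the double-pole coefficients.
\item The multipliers are $p_0/(n_0(n_0+p_0))$ from the two-step flag formula, with $(n_0,p_0)=(57,19)$ and $(195,13)$. They are not Jacobians.
\end{enumerate}
Finally, you still have to exclude flags of length three, since that is what makes the exceptional poles exactly double.
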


\begin{theorem}[Type $D$]\label{thm:D-family}
For every $r\geq4$, $\xi_{D_r}$ has a pole at $q_r^D$. It is simple for $r\ne8$ and double for $r=8$. The simple residue is given by \eqref{eq:D-singleton-residue}; it is positive for $r=4$ and $r\geq9$, and negative for $5\leq r\leq7$. Moreover,
\begin{equation}\label{eq:D8-main}
 [(s-1/11)^{-2}]\xi_{D_8}(s)
 =\frac{\Res_{s=1/11}\xi_{D_4}(s)}{10560}
 S_3\!\left(\frac3{22},\frac{10}{11},-\frac1{22}\right)>0.
\end{equation}
Here $S_3$ is the Selberg gamma product \eqref{eq:Selberg}.
\end{theorem}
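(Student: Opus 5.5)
We would prove Theorem~\ref{thm:D-family} by reading the pole at $q_r^D$ off the Mellin--Barnes (or Euler--Maclaurin) continuation of $\xi_{D_r}$ along parabolic faces. The first step is to write $\xi_{D_r}(s)$, near real $s$, as a sum over faces $F$ of the positive chamber. Each face contributes a term of the form
\[
(s-q_F)^{-1}\,\bigl(\text{Taylor coefficient in normal directions}\bigr)\times\bigl(\text{zeta function of the face Levi}\bigr).
\]
For a face whose Levi factor has semisimple part $D_L$, integrating out the $r-L$ transverse coordinates produces a homogeneity condition
\[
N_{D_r}s-N_{D_L}s-(\text{normal degree})=\dim,
\]
which must be solved for $s$. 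With normal degree $2$, i.e.\ the quadratic Taylor coefficient, together with the singleton face, one checks that the solution is $q_r^D=(r-3)/(r(r-1)-1)$, using $N_{D_r}=r(r-1)$. So the first task is an exhaustive support classification: we list every face and every normal Taylor degree whose pole location equals $q_r^D$. We expect this to give the singleton (rank-one complement) term for all $r\ge4$. The only coincidence with a second pole source should occur when the Levi piece contributes its own pole at the same point; the natural candidate is $D_4$ sitting inside $D_8$, since $q_8^D=5/55=1/11=q_4^D$. We would confirm that no other coincidence $q_r^D=q_L^D$, and no coincidence with the type-$A$ Levi values $q^A$, occurs. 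This reduces to a finite Diophantine check on the rational functions of $r$ and $L$.

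Second, for $r\ne8$ the pole is at most simple. Its residue is the singleton face term: a product of a gamma/Beta-type integral, obtained by explicit integration by parts of the quadratic normal coefficient, and the value at $q_r^D$ of the Levi zeta function. We would evaluate this in closed form to obtain \eqref{eq:D-singleton-residue}, and then determine its sign factor by factor. Gamma factors at small positive arguments and trigonometric factors such as $\sin(\pi\cdot)$ have sign determined by the integer parts of linear expressions in $q_r^D$. Zeta values $\zeta(k q_r^D-m)$ have known sign on the negative axis and on $(0,1)$. Tracking these signs gives positivity for $r=4$ and $r\ge9$ and negativity for $5\le r\le7$. Since the residue is a product of nonzero explicit factors, no numerical estimate is needed.

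Third, for $D_8$ we would combine the face term with the $D_4$ Levi. That face term carries a factor $\xi_{D_4}$ evaluated at $s$, which has a simple pole at $1/11$, multiplied by a normal integral that itself has a simple pole there. The coefficient of $(s-1/11)^{-2}$ is therefore $\Res_{s=1/11}\xi_{D_4}$ times the leading coefficient of the outer normal integral. We would compute the latter as a Selberg integral over the four-dimensional transverse simplex/cone, producing $S_3(3/22,10/11,-1/22)$ with the normalizing constant $1/10560$ coming from the quadratic Taylor coefficient and the degree count. Positivity then follows from the sign of $\Res\xi_{D_4}$ at $1/11$ (positive, by the $r=4$ case) and the sign of the Selberg gamma product, checked factor by factor. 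The negative parameter $-1/22$ requires using the continued Selberg formula.

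We expect the main obstacle to be the exhaustive support classification in the first step. We must show that every other face-and-degree combination either has a pole location different from $q_r^D$ or has a vanishing Taylor coefficient by symmetry or chamber cancellation. Without this, the residue could acquire extra contributions, and the order of the pole at $r=8$ could not be pinned down. We would attack it by bounding normal degrees via the pole-location formula and then enumerating Levi types $D_L\times A_{k}$ finitely.
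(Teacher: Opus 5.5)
Your support census and the identification of the $D_8$ coincidence ($q_8^D=q_4^D=1/11$) agree with the paper. (One slip: the homogeneity condition should read $N(S)s=|S|-\ell$. With your sign the singleton candidate would be $(r+1)/(r(r-1)-1)$.) Two steps, however, would fail as written.

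\emph{The simple residue.} For $r\neq8$ the residue is not a single face term. All $r$ supports $I\setminus\{j\}$ give the same candidate, so the residue is a sum of $r$ quadratic wall periods. After centering and squaring, the $r-2$ nonfork walls give the mixed Selberg chambers $K_n,\dots,K_1$ ($n=r-2$), and the two fork walls together give $K_0$. By the mixed-chamber recurrence each $K_p$ is $K_0^D$ times a product of sine ratios. The residue is therefore $K_0^D(q)F_0(q)$, where $F_0$ is a terminating $q$-hypergeometric connection sum. It must be evaluated, via a $q$-difference equation and $q$-Chu--Vandermonde, to obtain the nonzero sine product \eqref{eq:D-nonzero}. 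This is not a formality: the identical construction in types $B/C$ gives $F_1(q)=0$, and the candidate disappears. So ``the residue is a product of nonzero explicit factors'' is exactly what needs proof, and your plan has no mechanism for it. You also need two further checks:
\begin{itemize}
\item the comparison between the quadratic coefficient in the original simple-root gaps and the centered one (an exact twisted primitive, as in Lemma~\ref{lem:centering}), before the integration-by-parts removal of the insertion applies;
\item that $K_0^D$ is regular and nonzero for $r\neq8$.
\end{itemize}

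\emph{The $D_8$ coefficient.} It is not $\Res_{s=1/11}\xi_{D_4}$ times the polar coefficient $\cP_U(1/11)/N(U)$ of $\cP_U(s)\xi_{D_4}(s)/(N(U)s-|U|)$. That degree-zero factorization is valid only where the outer coordinates dominate. At order $-2$ the contribution comes from the four nested flags $U\subsetneq U\cup T$. Each has denominator $N(U)N(U\cup T)=44\cdot55$ and numerator $\cP_U(q)c_T$, where $\Res\xi_{D_4}=\sum_Tc_T/11$. This gives the multiplier $11/(44\cdot55)=1/220$, not $1/44$. With $\cP_U=\frac1{48}S_3(3/22,10/11,-1/22)$ (a factor $\frac1{3!}$ for the ordered chamber and $2^{-3}$ from squaring), one gets $1/10560$. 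Your mechanism gives $1/2112$, off by the factor $(n_0+p_0)/p_0=5$.

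A toy check shows the effect. For $\int_{[1,\infty)^2}x^{-s}(x+y)^{-s}y^{-2s}\dd x\dd y$ at $s=1/2$, the naive product predicts $a_{-2}=\frac12\cdot\frac12=\frac14$. Splitting into $x\ge y$ and $x\le y$ gives instead $\frac18=\frac{2}{2\cdot4}\cdot\frac12$.

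In particular, $1/10560$ cannot come from a quadratic Taylor coefficient, because the outer face has Taylor degree zero. Also, $S_3(3/22,10/11,-1/22)$ is an ordinarily convergent positive integral, since $-1/22>-3/44$, so no continuation is needed for its sign.
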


\begin{theorem}[Types $B$ and $C$]\label{thm:BC-family}
For every $r\geq4$ with $r\notin\{7,11\}$, both $\xi_{B_r}$ and $\xi_{C_r}$ are holomorphic at $q_r^{BC}$. At ranks $7$ and $11$ their pole order is at most one. The theorem does not decide whether $\xi_{B_7},\xi_{C_7},\xi_{B_{11}}$, or $\xi_{C_{11}}$ is holomorphic at the indicated point.
\end{theorem}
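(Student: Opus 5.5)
We would follow the general mechanism behind all three theorems: the poles of $\xi_\Phi$ off the standard lattice arise from integrating the Mellin--Barnes (or Euler--Maclaurin) expansion of the summand near a parabolic face. Concretely, we fix the maximal parabolic face associated with removing the end node (for $B_r$/$C_r$, the face whose Levi is of type $A_{r-1}$, or equivalently $B_{r-1}$/$C_{r-1}$ depending on the node, whichever produces the exponent $q_r^{BC}$). Splitting the variables into a radial variable $t$ transverse to the face and normalized face coordinates, the summand factors as $t^{-N_\perp s}$ times a face function. Taylor expanding the face function in the normal directions to quadratic order, the quadratic normal coefficient contributes a term $\int t^{-N s + r - 1 + 2}\,dt$-type singularity. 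This locates a candidate simple pole at the value where $N s - (\text{dimension}) - 2 = 0$ on the appropriate sub-face. We will check that this candidate value equals $q_r^{BC}=(r-3)/(r^2-1)$. Its residue is then a finite sum over the chambers (Weyl-group orderings of the normal coordinates) of continued integrals of the quadratic Taylor coefficient against the face function.

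The key step is to show that this chamber sum vanishes for $r\notin\{7,11\}$. We would write the quadratic normal coefficient explicitly as a sum over positive coroots $\beta$ not in the Levi of $-s\,b(\beta)\ldots$-type terms: second derivatives of $\prod \langle\beta,m\rangle^{-s}$ with respect to the normal variable. Integration by parts on the face (the explicit identities the abstract mentions) then reduces each term to a common Selberg-type integral times a rational function of $s$ and $r$. In types $B$/$C$, the sign-flip symmetry $m_i\mapsto -m_i$ of the chamber system pairs chambers, so the continued chamber sum becomes a rational multiple of a single nonzero Selberg product $S_{r-1}$. That multiple is a polynomial $P_r(s)$, and we evaluate it at $s=q_r^{BC}$. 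Factoring $P_r(q_r^{BC})$ as a polynomial in $r$, we expect roots exactly at $r=7,11$ among integers $r\ge4$. Away from these roots the quadratic pole should be cancelled by the contribution from the other relevant faces (the $B$/$C$ cancellation of the abstract). At $r=7,11$ only a residue survives, so the order is at most one.

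Next comes the support classification. We must check that no other face or higher-order Taylor term produces a singularity at $q_r^{BC}$, so that the Laurent principal part at this point comes only from the quadratic normal terms identified above. Here we would enumerate all parabolic subsystems $\Phi_L\subset\Phi$ and all normal degrees $k$ with candidate pole $s=(\dim+k)/N_\perp(L)$. We would show, by a Diophantine comparison with $(r-3)/(r^2-1)$, that coincidences occur only for the listed quadratic terms. Simultaneous coincidence of two faces, the only source of double poles, must also be ruled out; this keeps the order at most one. The same argument applies to $C_r$ by coroot duality, since $\Psi^+(B_r)=\Phi^+(C_r)$ up to scaling long roots, and the normalization changes only a constant factor.

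The main obstacle will be the cancellation identity: proving that the combined continued chamber sum at $q_r^{BC}$ is exactly zero rather than merely small. I would first try to derive it as a finite chamber relation. Summing over the two sign-chambers of the extra node, the integration-by-parts boundary terms telescope. The remaining quadratic coefficient then becomes a total derivative whose integral equals a factor $(Ns - \text{const})$ evaluated at $q_r^{BC}$, with the exceptional ranks $7,11$ appearing where an auxiliary Selberg parameter hits a pole of a gamma factor.
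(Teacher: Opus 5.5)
There are genuine gaps. The most serious concern the cancellation mechanism and the ranks $7$ and $11$.

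\textbf{Setup.} The candidate $q_r^{BC}$ comes from the $r$ supports whose normal complement is a single node. These are the $r-1$ difference walls $t_i=t_{i+1}$ and the terminal wall $t_r=0$, with $|S|=r-1$, $N(S)=r^2-1$, Taylor degree $\ell=2$, and $q=(|S|-\ell)/N(S)$. The Taylor degree lowers the numerator. So neither your formula $(\dim+k)/N_\perp$ nor a Levi of type $A_{r-1}$ or $B_{r-1}/C_{r-1}$ yields this value.

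\textbf{Cancellation.} Your picture is inverted. At non-exceptional ranks there are no other contributing faces. A multiple $P_r(q_r)$ that is nonzero away from $r=7,11$ would therefore just produce a pole. In the paper, centering plus an integration-by-parts identity replaces the insertion by $\kappa_{BC}\neq0$ times the uninserted period. Each difference wall then contributes exactly one mixed Selberg chamber $K_p$, and their sum is $K_0(s)F_1(s)$.

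The factor $F_1$ comes from the Forrester--Warnaar contour-rotation recurrences, whose ratios $\rho_p(s)$ are quotients of sines and cosines of multiples of $\pi s/2$. A sign-flip pairing of chambers cannot produce these, and $F_1$ is not a polynomial in $s$. Its closed form (Proposition~\ref{prop:BC-meromorphic-product}) contains $\cos(M\theta-n\pi/2)$ with $M=r^2-1$ and $n=r-2$. Since $Mq_r^{BC}=n-1$, $F_1$ has a simple zero at $q_r^{BC}$ in every rank. The terminal wall is killed separately by an exact twisted differential.

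The exceptional ranks are where the other factor misbehaves. There the Selberg reference $K_0$ has a simple pole, because one gamma argument $-\tfrac12+\tfrac{(2n+3-j)q}{2}$ vanishes, and this pole cancels the zero of $F_1$. Your last paragraph gestures at this, but it contradicts your earlier claim that $7,11$ are roots of $P_r$.

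\textbf{Ranks $7$ and $11$.} Your plan to rule out simultaneous coincidences fails precisely there, because the coincidences really occur. For $B_7/C_7$ at $1/12$ there is an extra degree-zero support with connected complement $B_5/C_5$ ($|S|=2$, $N(S)=24$). For $B_{11}/C_{11}$ at $1/15$ there is one with complement $B_4/C_4$ ($|S|=7$, $N(S)=105$). These are nested with the singleton supports in the tail and give five, respectively four, flags of length two. So a double pole is a priori possible, and the single-support residue rule no longer applies.

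The missing idea is the two-step flag formula (Lemma~\ref{lem:two-step}). The aggregate $(s-q)^{-2}$ coefficient equals a root-count multiplier times the outer face period times $\Res_{s=1/12}\xi_{B_5}$ (resp.\ $\Res_{s=1/15}\xi_{B_4}$, and likewise for $C$). Since $q_5^{BC}=1/12$ and $q_4^{BC}=1/15$, these lower-rank residues vanish by the non-exceptional case of the theorem itself. That recursion, together with the absence of length-three flags, is what gives order at most one. Determining the simple coefficient would then require the full flag expansion, and the paper leaves it undecided.
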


\paragraph{Main steps of the proof.}
There are two points at which a list of candidate hyperplanes is insufficient. First, the residue formula is expressed in the original simple-root gaps, whereas the chamber integrals are most easily evaluated after centering the collapsing blocks. Lemma~\ref{lem:centering} proves that this change of coordinates preserves the relevant projective period, including the mixed quadratic term. Proposition~\ref{prop:common-insertion} then removes the quadratic insertion from every type-$A$ chamber by an explicit integration-by-parts identity.

Second, a finite chamber coefficient may vanish at the same parameter at which its reference Selberg integral has a pole. The product must be evaluated from both Laurent expansions. Section~\ref{sec:cancellation} determines these orders, and Section~\ref{sec:actual-singletons} then adds every support that contributes to the full Witten coefficient.

\paragraph{Organization.}
Section~\ref{sec:local} states the residue formulas and fixes the integral conventions. Sections~\ref{sec:A-count}--\ref{sec:A-evaluation} prove Theorem~\ref{thm:main}, including the double-pole coefficients in Section~\ref{sec:cumulative}. Sections~\ref{sec:cancellation}--\ref{sec:actual-singletons} prove Theorems~\ref{thm:D-family} and~\ref{thm:BC-family}.

\subsection{Relation to earlier work}
Meromorphic continuation and the root-system framework are due to Matsumoto--Tsumura and Komori--Matsumoto--Tsumura \cite{MT,KMT}; related polynomial-Dirichlet continuation results include \cite{Lichtin,Essouabri}.

Au proved that $\xi_{A_3}$ is holomorphic at $s=1/5$ and that $\xi_{B_3},\xi_{C_3}$ are holomorphic at $s=1/8$ \cite[Theorems~9.2, 9.4, and 9.6]{AuRank}. In Remark~9.3, he anticipated that many other candidates obtained by diagonal restriction would not be poles. These observations motivate the cancellation problem considered here.

The two examples just cited have a singleton normal complement and Taylor degree $\ell=1$, in the notation of Section~\ref{sec:local}. Remark~\ref{rem:first-normal} gives the elementary first-order symmetry argument within this paper. Theorem~\ref{thm:BC-family} addresses Au's cancellation question for the quadratic singleton family, $\ell=2$. The results for higher even degrees in Section~\ref{sec:cancellation} classify the associated chamber sums, not the poles of the full Witten functions.

The local residue formulas are taken from \cite[Theorems 2.7, 3.2, 5.3, and 5.4]{Generic}. They apply to proper supports and to nested chains of supports. The comparison between their coefficients and the centered coordinates used here is a separate step, supplied by Lemma~\ref{lem:centering}.

The Selberg evaluation, mixed-chamber recurrences, and terminating basic-hypergeometric identities are classical \cite{FW,DLMF,Mimachi}. Here they are applied to the quadratic coefficients specified in \eqref{eq:three-q}. The new assertions are the two all-rank pole families, their three exceptional double-pole coefficients, and the $B/C$ quadratic holomorphy and endpoint criteria. The leading- and second-pole results \cite{Leading,Second,Walls} are earlier results used for comparison.

\section{Local residue formula and notation}\label{sec:local}
Let $I=\{1,\ldots,r\}$. A nonempty set $S\subseteq I$ specifies the simple-root coordinates that are scaled together. Define
\[
 R(S)=\{\beta\in\Psi^+: b_i(\beta)>0\text{ for some }i\in S\},
 \qquad N(S)=|R(S)|.
\]
The complementary nodes $J=I\setminus S$ generate the parabolic subsystem $\Phi_J$. We call it the normal complement: its coordinates are the normal variables in the Taylor expansion below. Thus
\begin{equation}\label{eq:count}
 N(S)=N-|\Psi_J^+|,
 \qquad q(S,\ell)=\frac{|S|-\ell}{N(S)}.
\end{equation}
The integer $\ell\geq0$ is the total Taylor degree in the normal variables. A candidate is \emph{shifted} when $\ell>0$. Its coefficient may vanish, so a candidate value need not be a pole. Positive candidates satisfy $0\leq\ell<|S|$. A \emph{singleton complement} means $|J|=1$, not $|S|=1$. In type $A$, two disjoint pairs correspond to two nonadjacent nodes of $J$.

Write $K_\Phi=\prod_{\beta\in\Psi^+}\operatorname{ht}(\beta)$, where $\operatorname{ht}(\beta)=\sum_i b_i(\beta)$ is the coroot height. The ordinary Witten normalization is $\zeta_\Phi(s)=K_\Phi^s\xi_\Phi(s)$. At a double pole $q$, coefficients $a_{-2},a_{-1}$ in root-product normalization become
\begin{equation}\label{eq:normalization}
 K_\Phi^q a_{-2},\qquad K_\Phi^q(a_{-1}+a_{-2}\log K_\Phi).
\end{equation}
In particular, a normalization change affects the simple coefficient beneath a double pole.

\Needspace{8\baselineskip}
\subsection{The analytic input}
We use four results of \cite{Generic}, specifically arXiv:2607.18945v2, in the normalization \eqref{eq:def-xi}. Theorem~2.7 gives the local flag expansion, Theorem~3.2 gives the residue on a proper support, Theorem~5.3 gives the recursive flag residue, and Theorem~5.4 gives the aggregate Laurent coefficients. Near a positive candidate $q$, the resulting expansion has the form below. Poles of lower-dimensional boundary integrals have already been assigned to longer support chains:
\begin{equation}\label{eq:local-form}
 \xi_\Phi(s)=H(s)+\sum_{\mathfrak f}
 \frac{C_{\mathfrak f}(s)}{(s-q)^{|\mathfrak f|}\prod_{S\in\mathfrak f}N(S)}.
\end{equation}
Here $H$ and each $C_{\mathfrak f}$ are holomorphic near $q$. A \emph{flag} $\mathfrak f$ is a strict chain $S_1\subsetneq\cdots\subsetneq S_k$ occurring in one resolved chart; its length is $|\mathfrak f|=k$. The sum includes the flags whose Taylor degrees satisfy $N(S_j)q-|S_j|+\ell_j=0$ at every step. An individual numerator may depend on the partition used in the resolution, but the sum at each Laurent order is independent of that choice.

The denominators in \eqref{eq:local-form} arise as follows. A resolved chart has factors
\[
 \prod_{j=1}^k\rho_j^{N(S_j)s-|S_j|-1}F(s,\rho,y),
 \qquad S_1\subsetneq\cdots\subsetneq S_k.
\]
Integrating a Taylor monomial of degree $\ell_j$ in $\rho_j$ gives the denominator $N(S_j)s-|S_j|+\ell_j$. A sufficiently high Taylor remainder is integrable, together with the parameter derivatives, on any fixed compact parameter set. If a boundary integral has a pole, its polar term contributes to a longer flag. This subtraction is what makes the remaining numerators in \eqref{eq:local-form} holomorphic.

\begin{proposition}[Laurent coefficients from support flags]\label{prop:Laurent}
If the longest incident chain has length $d$, then the order at $q$ is at most $d$, and
\begin{equation}\label{eq:Laurent}
 [(s-q)^{-m}]\xi_\Phi(s)=
 \sum_{|\mathfrak f|\geq m}
 \frac{C_{\mathfrak f}^{(|\mathfrak f|-m)}(q)}
 {(|\mathfrak f|-m)!\prod_{S\in\mathfrak f}N(S)},
 \qquad 1\leq m\leq d.
\end{equation}
\end{proposition}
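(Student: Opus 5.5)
The plan is to read Proposition~\ref{prop:Laurent} off the local form \eqref{eq:local-form} by Taylor expanding each numerator. Fix a small punctured disc around $q$ on which $H$ and all $C_{\mathfrak f}$ are holomorphic; by the finiteness of the resolution, only finitely many flags occur. For each flag $\mathfrak f$ of length $k=|\mathfrak f|$, expand
\[
 C_{\mathfrak f}(s)=\sum_{j\geq0}\frac{C_{\mathfrak f}^{(j)}(q)}{j!}(s-q)^j .
\]
Dividing by $(s-q)^k\prod_{S\in\mathfrak f}N(S)$ gives a Laurent series whose coefficient of $(s-q)^{-m}$ is $C_{\mathfrak f}^{(k-m)}(q)/\bigl((k-m)!\prod_{S\in\mathfrak f}N(S)\bigr)$ when $k\geq m$, and zero otherwise. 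The holomorphic part $H$ contributes nothing to negative powers. Summing over the finitely many flags gives \eqref{eq:Laurent}, since Laurent coefficients are additive.

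For the order bound, every flag in \eqref{eq:local-form} is a strict chain of supports incident to $q$, so it has length at most $d$. Hence no term has a pole of order exceeding $d$, and $[(s-q)^{-m}]\xi_\Phi=0$ for $m>d$. This is why the range $1\leq m\leq d$ suffices.

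The only substantive point is the legitimacy of \eqref{eq:local-form} itself. This requires that the $C_{\mathfrak f}$ be genuinely holomorphic at $q$, with the poles of boundary integrals already transferred to longer flags, and that the sum be independent of the chosen resolution. I would cite Theorems~2.7, 5.3 and 5.4 of \cite{Generic} for this, as the paper sets up. I would also note that although individual $C_{\mathfrak f}$ depend on the partition used in the resolution, each Laurent coefficient of $\xi_\Phi$ is intrinsic, so the displayed sum is well defined. I expect this bookkeeping to be the main obstacle, specifically verifying that the subtraction leaves holomorphic numerators uniformly on a neighbourhood of $q$. Once it is granted, the proposition is a direct coefficient extraction.
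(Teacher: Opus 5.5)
Your proposal is correct and follows the paper's proof: expand each holomorphic numerator $C_{\mathfrak f}$ in \eqref{eq:local-form} in powers of $s-q$, take its term of degree $|\mathfrak f|-m$ as the only contribution to order $-m$, and sum over flags. You also spell out the order bound (every flag has length at most $d$), and, like the paper, you take the local form itself as given from the cited results of \cite{Generic}.
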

\begin{proof}
Expand each holomorphic numerator in \eqref{eq:local-form} in powers of $s-q$. Its term of degree $|\mathfrak f|-m$ is the unique term contributing to order $-m$. Summing gives \eqref{eq:Laurent}.
\end{proof}
Two supports that are not nested cannot occur in the same flag. Even if they give the same candidate value, they do not supply a product of denominators in one chart. A multiple pole therefore requires both nesting and a nonzero total coefficient.

\subsection{The coefficient attached to a single support}
For a proper support $S$, write $u=(u_i)_{i\in S}$ and $z=(z_j)_{j\in J}$, and expand
\begin{equation}\label{eq:Taylor-support}
 \prod_{\beta\in R(S)}
 \left(\sum_{i\in S}b_i(\beta)u_i
       +\sum_{j\in J}b_j(\beta)z_j\right)^{-s}
 =\sum_{\nu\in\Z_{\geq0}^{J}}F_{S,\nu}(u;s)z^\nu.
\end{equation}
Here $\nu$ is a multi-index, $|\nu|=\sum_{j\in J}\nu_j$, and $F_{S,\nu}$ includes the Taylor factor $1/\nu!$. Define the weighted complementary zeta function by
\[
 Z_{J,\nu}(s)=\AC\sum_{z\in\N^J}z^\nu
       \prod_{\beta\in\Psi_J^+}
       \left(\sum_{j\in J}b_j(\beta)z_j\right)^{-s}.
\]
The projective coefficient $\cP_{S,\nu}(s)$ is the meromorphic integral of $F_{S,\nu}$ on the simplex $\sum_{i\in S}u_i=1$, with its proper boundary poles subtracted as in \eqref{eq:local-form}. At $q=(|S|-\ell)/N(S)$, provided no longer incident flag contributes, the support contribution is
\begin{equation}\label{eq:single-support-rule}
 \frac1{N(S)}\sum_{|\nu|=\ell}
       \cP_{S,\nu}(q)Z_{J,\nu}(q).
\end{equation}
This is the diagonal form of \cite[Theorem 3.2]{Generic}; $\AC$ denotes meromorphic continuation. For the simple-pole calculations below, the displayed factors are regular at $q$. When a longer flag contributes, the product must instead be evaluated through the full expansion \eqref{eq:local-form}.

\subsection{Integral conventions}\label{sec:integral-conventions}
Let $f(u)$ be homogeneous of degree $-d$ in $d$ positive variables, and let $E=\sum u_i\partial_{u_i}$ be the radial vector field. Its projective integral is the integral of $f\,\iota_E(\dd u_1\wedge\cdots\wedge\dd u_d)$ over a section meeting each positive ray once. Here $\iota_E$ denotes contraction with $E$. Homogeneity makes the choice of section immaterial. On $\sum u_i=1$, we orient the section so that the measure is the positive coordinate measure obtained by eliminating one variable. This is not Euclidean surface measure. A zero-dimensional simplex has measure one.

For an integral, $\AC$ means meromorphic continuation in the exponents of its linear factors, followed by restriction to the stated parameter family. We call such a continued integral a \emph{period}. A \emph{loaded chamber} is an oriented chamber together with a branch of the multivalued density. At nonresonant parameters it can be regularized to a closed twisted cycle. An exact twisted differential integrates to zero on that cycle, and the resulting identity continues meromorphically; see \cite[\S1.1, arXiv version]{Mimachi} for the regularization map and its genericity requirement. At singular parameters we continue the entire identity before evaluating it; we do not multiply separately assigned finite parts.

The full-cube Selberg normalization is
\begin{equation}\label{eq:Selberg}
 S_n(a,b,c)=\prod_{j=0}^{n-1}
 \frac{\Gamma(a+jc)\Gamma(b+jc)\Gamma(1+(j+1)c)}
 {\Gamma(a+b+(n+j-1)c)\Gamma(1+c)},\qquad S_0=1.
\end{equation}
In its convergence region this equals the integral of
$\prod_i x_i^{a-1}(1-x_i)^{b-1}\prod_{i<j}|x_i-x_j|^{2c}$ over $(0,1)^n$. The integral over one ordered chamber is $S_n/n!$. Outside that region \eqref{eq:Selberg} is its meromorphic continuation \cite{FW}.

\subsection{The first normal Taylor coefficient}
\begin{remark}[The first normal coefficient]\label{rem:first-normal}
Choose a Weyl-invariant inner product on the real weight space $V$ and write $V=X_J\oplus V_J$, where $X_J$ is fixed by the parabolic Weyl group $W_J$ and $V_J=X_J^\perp$. For $x\in X_J$ away from the smaller flats, factor the diagonal density as $F_s(x+y)=F_{J,s}(y)G_s(x,y)$, with the normal root factors in $F_{J,s}$. The remaining factor is analytic at $y=0$ and satisfies $G_s(x,wy)=G_s(x,y)$. Its linear coefficient lies in $(V_J^*)^{W_J}=0$, because an invariant linear functional vanishes on the normals of the simple reflections, which span $V_J$.

The original simple-root gaps may differ from these centered coordinates by an exterior translation $u\mapsto u+c(y)$. At degree one the induced difference is $D_cF_0$, and at the critical homogeneity
\[
 D_cF_0\,\iota_E\mathrm{vol}
 =-\dd\bigl(\iota_E(F_0\,\iota_{D_c}\mathrm{vol})\bigr),
 \qquad \mathrm{vol}=\dd u_1\wedge\cdots\wedge\dd u_d.
\]
The independent-exponent Stokes argument of Lemma~\ref{lem:centering} makes this projective period zero. Hence the first normal coefficient also vanishes in \eqref{eq:single-support-rule}; higher degrees and derivatives at multiple intersections are not covered by this argument.
\end{remark}

\section{Type \texorpdfstring{$A$}{A}: support classification}\label{sec:A-count}
For the remainder of the type-$A$ argument, put
\begin{equation}\label{eq:q-main}
 M_r=\binom{r+1}{2}-2,\qquad q_r=q_r^A=\frac{r-4}{M_r},\qquad h_r=\frac{\pi q_r}{2}.
\end{equation}
The positive roots of $A_r$ are intervals. If the connected components of $J=I\setminus S$ have lengths $\lambda_1,\ldots,\lambda_t$, then
\begin{equation}\label{eq:A-count}
 N(S)=\binom{r+1}{2}-\sum_{a=1}^t\binom{\lambda_a+1}{2}.
\end{equation}
The normal subsystem $A_1\sqcup A_1$ with Taylor order two gives $q_r$ in \eqref{eq:q-main}.

\begin{theorem}[Supports at the type-$A$ candidate]\label{thm:complete-A}
Every support satisfying $q(S,\ell)=q_r$ has normal complement $A_1\sqcup A_1$ and Taylor degree two, with the following additional degree-zero complements:
\[
 r=12:\quad J\simeq A_6;\qquad
 r=20:\quad J\simeq A_5.
\]
There are $\binom{r-1}{2}$ supports of the first kind. Rank $12$ has seven additional supports and seventy flags of length two. Rank $20$ has sixteen additional supports and ninety-six flags of length two. No flag at $q_r$ has length three.
\end{theorem}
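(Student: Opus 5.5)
The plan is to reduce the condition $q(S,\ell)=q_r$ to a Diophantine equation in the complement data, solve it, and then read off the flag structure. Let $J=I\setminus S$ have components of lengths $\lambda_1,\ldots,\lambda_t$. Put $T=\sum_a\lambda_a=|J|$ and $P=\sum_a\binom{\lambda_a+1}{2}$. Then $|S|=r-T$, and by \eqref{eq:A-count} $N(S)=M_r+2-P$. Clearing denominators in $(r-T-\ell)M_r=(r-4)(M_r+2-P)$ gives the master equation
\[
 M_r\,(4-T-\ell)=(r-4)(2-P).
\]
The side constraints are:
\begin{itemize}
 \item $\ell\geq0$ and $\ell<|S|$;
 \item $T\leq P\leq\binom{T+1}{2}$, since every $\lambda_a\geq1$;
 \item $P-2<M_r$, since $N(S)>0$;
 \item $S\neq\emptyset$, so $T\leq r-1$.
\end{itemize}
I will split into three cases according to the sign of $m=T+\ell-4$.

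\emph{Case $m<0$.} Here $T\leq3$, so $P\leq6$ and $|(r-4)(2-P)|\leq4(r-4)$. The left side has absolute value at least $M_r$, and $M_r=(r^2+r-4)/2>4(r-4)$ for all $r\geq5$. So this case is empty.

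\emph{Case $m=0$.} The equation forces $P=2$. Since each component contributes at least $1$, and the contribution $1$ occurs only for $\lambda_a=1$, the only possibility is $J\simeq A_1\sqcup A_1$ with $T=2$ and $\ell=2$. Choosing two nonadjacent nodes among $r$ gives $\binom{r-1}{2}$ supports.

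\emph{Case $m\geq1$.} This is the main obstacle. Here $(r-4)(P-2)=mM_r$ with $1\leq m<r-4$, the upper bound coming from $P-2<M_r$.
\begin{enumerate}
 \item Since $r^2+r-4\equiv16\pmod{r-4}$, $g=\gcd(M_r,r-4)$ divides $16$. Hence $m=j(r-4)/g$ with $1\leq j<g\leq16$, and $P-2=jM_r/g$.
 \item The growth bound $P\leq\binom{T+1}{2}\leq\binom{m+5}{2}$ compares a quantity of size about $j^2r^2/(2g^2)$ with one of size about $jr^2/(2g)$. Since $j<g$, this fails for all $r$ beyond an explicit bound.
 \item For the finitely many remaining $r$ and finitely many pairs $(g,j)$, I would check directly which $P$ are realizable as $\sum\binom{\lambda_a+1}{2}$ with $\sum\lambda_a=T\leq m+4$ and $\ell=m+4-T\geq0$. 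This is a small partition search, which I would organize by hand or with a short verification program.
\end{enumerate}
The expected survivors come out as follows. For $r=12$ we have $M_r=76$ and $(r-4)=8$, so the equation reads $2(P-2)=19m$. Then $m=2$ gives $P=21=\binom72$, which forces $J\simeq A_6$ with $\ell=0$. The next value $m=4$ requires $P=40>\binom92$, which is impossible. For $r=20$ we have $M_r=208$, and $m=2$ gives $P=15+2\cdot0$; more precisely $(16)(P-2)=208m$ forces $P=13m+2$, so $m=1$, $P=15=\binom62$, $T=5$, giving $J\simeq A_5$ with $\ell=0$. I must confirm that no other rank survives the bound and the search; this is where the real work lies.

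Counting and flags come next. A connected complement $A_6$ in a path of $12$ nodes has $7$ positions, and $A_5$ in a path of $20$ nodes has $16$ positions. A flag $S_1\subsetneq S_2$ requires $J_2\subsetneq J_1$. Two degree-zero supports of the same size are never nested, so each length-two flag pairs an exceptional $S$ (complement $A_k$) with a singleton-pair support whose two nonadjacent complement nodes lie inside that $A_k$. A path of $k$ nodes contains $\binom k2-(k-1)$ nonadjacent pairs, namely $10$ for $k=6$ and $6$ for $k=5$. This gives $7\cdot10=70$ and $16\cdot6=96$ flags. A flag of length three would need three mutually nested supports at $q_r$. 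The candidate supports come in only two sizes, and supports of equal size are not nested, so no such chain exists. Finally I would verify that the nested pairs do occur in one resolved chart with compatible Taylor degrees, namely $\ell=0$ on the outer step and $\ell=2$ on the inner step, as required by \eqref{eq:local-form}.
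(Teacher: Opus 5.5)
Your plan is correct and matches the paper's proof essentially step for step. The shared steps are the reduction with $\gcd(r-4,M_r)\mid16$ and the parametrization of the reduced fraction: your $j$ is the paper's $g-t$, and your separate size estimate for $m<0$ replaces its bound $t\le g$. The proofs also share the growth inequality from $P\le\binom{|J|+1}{2}$ and the counting of placements, nonadjacent pairs, and the two support sizes.

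To close the step you left open, note that your growth inequality reduces exactly to $(j/g)(r-4)^2\le16$, hence $r\le20$. The only survivors are then $r=8,12,20$. The case $r=8$ (with $P=19$, $T\le6$) passes this bound and is eliminated only by your realizability search: $P=21$ for $A_6$ and $P\le16$ for every other complement of rank at most six.
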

\begin{proof}
Write $u=r-4$, $M=M_r$, $j=|J|$, and $N_J=|\Psi_J^+|$. The candidate equation is
\begin{equation}\label{eq:integer-candidate}
 \frac{r-j-\ell}{\binom{r+1}{2}-N_J}=\frac{u}{M}.
\end{equation}
Let $g=\gcd(u,M)$. Since $2M=u(u+9)+16$, one has $g\mid16$. The reduced fraction in \eqref{eq:integer-candidate} implies
\[
 \binom{r+1}{2}-N_J=tM/g,
 \qquad r-j-\ell=tu/g
\]
for a positive integer $t$. Here $M/g>2$ for $r\geq5$, so the upper bound $N(S)\leq M+2$ implies $t\leq g$.

If $t=g$, then $N_J=2$. The only type-$A$ parabolic subsystem with two positive roots is $A_1\sqcup A_1$; hence $j=2$ and $\ell=2$.

Suppose $t<g$ and set $x=(g-t)/g$. Then
\[
 N_J=2+xM,\qquad j+\ell=4+xu.
\]
Among type-$A$ parabolic subsystems of rank at most $j$, the largest positive-root count is $j(j+1)/2$. Therefore
\[
 2+\frac{x}{2}\{u(u+9)+16\}\leq
 \frac{(4+xu)(5+xu)}2,
\]
which reduces to $xu^2\leq16$. As $x\geq1/g$ and $g\leq16$, it follows that $u\leq16$. Testing the divisors of $16$ in this finite range leaves only
\[
\begin{array}{c|c|c|c}
r&g&N_J&j\text{ at most}\\\hline
8&2&19&6\\
12&4&21&6\\
20&16&15&5.
\end{array}
\]
The first row is impossible: a rank-six irreducible subsystem has $21$ positive roots, whereas a reducible subsystem of rank at most six has at most $16$. The last two rows force $A_6$ and $A_5$, respectively, with $\ell=0$.

Two nonadjacent nodes in a path of length $r$ can be chosen in $\binom{r-1}{2}$ ways. A block of six consecutive nodes in $A_{12}$ has seven placements and contains ten nonadjacent pairs. A block of five consecutive nodes in $A_{20}$ has sixteen placements and contains six such pairs. These give $70$ and $96$ chains. There are only two support sizes at either exceptional point, which excludes a chain of length three.
\end{proof}

\section{Type \texorpdfstring{$A$}{A}: chamber periods}\label{sec:A-geometry}
The cumulative sums of the simple-root gaps give $r+1$ ordered points. Collapsing two disjoint pairs leaves $m=r-1$ blocks: two contain two points, and the others contain one. We call these \emph{double blocks} and \emph{single blocks}. There are $d=m-1=r-2$ gaps between their centers. For the ordered centers $z_1<\cdots<z_m$ and block sizes $w_i$, put
\[
 D_w(z)=\prod_{i<j}(z_j-z_i)^{w_iw_j}.
\]
The degree of $D_w$ is $M_r$. Passing to the successive center gaps removes translation. If the double blocks have centers $X,Y$, define the rational insertion
\begin{equation}\label{eq:insertion}
 H(z)=\frac4{(Y-X)^2}+
 \sum_{z_i\ne X,Y}\left\{\frac1{(X-z_i)^2}+\frac1{(Y-z_i)^2}\right\}.
\end{equation}
At $s=q_r$, the density $D_w^{-s}H$ has degree $-d$. We therefore integrate it projectively, using the coordinate measure on $\sum v_i=1$ defined in Section~\ref{sec:local}.

The residue formula \eqref{eq:single-support-rule} uses the original simple-root gaps. The next lemma shows that its quadratic coefficient has the same period as the centered expression.

\begin{lemma}[Centered quadratic coefficient]\label{lem:centering}
For a fixed two-pair support, the projective period of the quadratic Taylor coefficient in the original gaps equals that of
\begin{equation}\label{eq:centered-jet}
 \frac{s}{4}D_w^{-s}
 \left(a^2\sum_{B\ne X}\frac{w_B}{(X-z_B)^2}
       +b^2\sum_{B\ne Y}\frac{w_B}{(Y-z_B)^2}\right)
\end{equation}
at $M_rs=d-2$. Here $a,b$ are the gaps inside the double blocks. In each sum, $B$ ranges over the other blocks, with center $z_B$ and size $w_B$. The equality holds for the meromorphic period of each coefficient in $a,b$; in particular, the mixed coefficient $ab$ has zero period.
\end{lemma}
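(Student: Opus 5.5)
The plan is to reduce the lemma to a pointwise Taylor computation in centered coordinates, plus the exactness argument of Remark~\ref{rem:first-normal} carried one order further. Label the $r+1$ ordered points by their positions. The two normal variables are the gaps $a,b$ inside the two collapsing pairs. The support factors $R(S)$ are the differences between points in different blocks; the two factors $a,b$ themselves belong to $\Psi_J^+$ and are excluded.

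\emph{Step 1: centered Taylor expansion.} In centered coordinates the double blocks are $\{X\pm a/2\}$ and $\{Y\pm b/2\}$. Each factor is then $(x+\delta)^{-s}$, with $x$ a difference of centers and $\delta$ linear in $(a,b)$.
\begin{itemize}
\item For a single block $z$ against the block $X$, the two factors have $\delta=\pm a/2$. The linear terms cancel, and the quadratic term is $\frac{s}{4}\cdot\frac{2a^2}{2(X-z)^2}$, i.e.\ $\frac s4 a^2 w_B/(X-z)^2$ with $w_B=1$.
\item For the four factors between the two double blocks, $\delta\in\{\pm a/2\mp b/2,\ \pm a/2\pm b/2\}$. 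Expanding $\prod_i(1+\delta_i/x)^{-s}$ to second order gives
\[
 -s\frac{\sum\delta_i}{x}+\frac{s}{2}\frac{\sum\delta_i^2}{x^2}+\frac{s^2}{2}\frac{(\sum\delta_i)^2}{x^2}.
\]
Here $\sum\delta_i=0$ and $\sum\delta_i^2=a^2+b^2$, with the $ab$ terms cancelling.
\end{itemize}
Hence, pointwise, the linear coefficient in $(a,b)$ of the centered density vanishes, and the quadratic coefficient is exactly \eqref{eq:centered-jet}, where the term for the other double block carries $w_B=2$. In particular, the mixed coefficient $ab$ is identically zero in centered coordinates.

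\emph{Step 2: comparison with the original gaps.} Passing from centered coordinates to the original simple-root gaps translates the exterior points by a vector $c(a,b)=a\,c_a+b\,c_b$. This vector is linear in $(a,b)$ and constant in the support variables $u$. Thus $F^{\mathrm{orig}}(u;a,b)=F^{\mathrm{cent}}(u+c(a,b);a,b)$. Since the centered linear coefficient vanishes, the quadratic coefficients differ by
\[
 \tfrac12 D_c^2F_0=\tfrac12\bigl(a^2D_{c_a}^2+2ab\,D_{c_a}D_{c_b}+b^2D_{c_b}^2\bigr)F_0 .
\]
Here $F_0=D_w^{-s}$ has degree $-M_rs=-(d-2)$. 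Each term has the form $D_{c'}G$ with $G=D_{c''}F_0$ homogeneous of degree $-(d-1)$. For a constant vector field $c'$, put $\omega=G\,\iota_{c'}\mathrm{vol}$. Then $d\omega=(D_{c'}G)\mathrm{vol}$, and the Lie derivative satisfies $L_E\omega=(\deg G+d-1)\omega=0$. Cartan's formula therefore gives
\[
 (D_{c'}G)\,\iota_E\mathrm{vol}=-\dd(\iota_E\omega),
\]
exactly as in Remark~\ref{rem:first-normal}. So each coefficient of $a^2$, $ab$, $b^2$ in the difference is an exact projective form.

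\emph{Step 3: vanishing of the exact periods.} This is the main obstacle: showing that these exact forms have zero meromorphic period. I would replace $D_w^{-s}$ by $\prod(z_j-z_i)^{-s_{ij}}$ with independent exponents, subject only to the single linear constraint that fixes the total degree at the critical value. The identities of Step 2 are algebraic in the exponents, so they persist under this deformation.

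For exponents in an open region where the boundary factors vanish to high order on all faces of the simplex and at the collision loci, the Stokes boundary terms vanish. Equivalently, the loaded chamber is regularized to a closed twisted cycle as in Section~\ref{sec:integral-conventions}. The period of each exact form is then zero on that region, hence identically zero after meromorphic continuation. Restricting afterwards to the diagonal $M_rs=d-2$ is legitimate by the convention of continuing the whole identity before specializing.

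One must also check that the boundary-pole subtractions defining $\cP_{S,\nu}$ are compatible with this argument. They are taken from the same continued integral, so they are compatible. Hence the periods agree coefficientwise in $a,b$, and the $ab$ period is zero.
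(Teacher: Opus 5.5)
Your proposal follows the paper's proof essentially step for step. It uses the centered expansion, with linear terms cancelling within each pair and the $ab$ term cancelling among the four cross factors, and the identity $F(u)=F^c(u+c)$ giving the difference $\tfrac12 D_c^2F_0$. It then uses the Cartan primitive at critical homogeneity, and Stokes' theorem at independent exponents followed by meromorphic continuation; splitting $D_c^2$ into its $a^2$, $ab$, $b^2$ pieces is only a cosmetic variant of the paper's single primitive $\eta=\tfrac12(D_cF_0)\,\iota_{D_c}\mathrm{vol}$.

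You assert, without showing it, that the hyperplane of total degree $2-d<0$ contains an open set of exponents where the boundary terms vanish. The paper supplies this with the explicit witness $\prod_j u_j^4/(u_1+\cdots+u_d)^{5d-2}$. It works because the full-length factor is identically $1$ on the simplex and can absorb the negative degree, so you should include such a witness to complete Step~3.
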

\begin{proof}
Let $u_j$ be the gap between the last point of block $j$ and the first point of block $j+1$. Give the double blocks widths $a,b$ and the single blocks width zero; denote these widths by $\delta_j$. The corresponding center gaps are exactly
\begin{equation}\label{eq:gap-change}
 v_j=u_j+c_j,\qquad c_j=\frac{\delta_j+\delta_{j+1}}2.
\end{equation}
The Jacobian with respect to the exterior gaps is one. We use this change only to compare Taylor coefficients; the original lattice variables remain unchanged.

Let $F(u,a,b;s)$ be the exterior root product to the power $-s$, with the two normal root factors omitted. Let $F^c(v,a,b;s)$ be the same product written with the pairs centered at their midpoints, and put $F_0(v;s)=D_w(v)^{-s}$. Equation~\eqref{eq:gap-change} gives the exact identity
\[
 F(u,a,b;s)=F^c(u+c,a,b;s).
\]
Within each pair, the displacements are $\pm a/2$ or $\pm b/2$. Their linear terms cancel. Expanding the logarithm to degree two and then exponentiating gives \eqref{eq:centered-jet}; the mixed $ab$ term cancels when the four roots between the two pairs are added. Consequently, if $D_c=\sum_jc_j\partial_{u_j}$ and a subscript denotes total Taylor degree, then
\begin{equation}\label{eq:jet-difference}
 F_2-F^c_2=\frac12D_c^2F_0.
\end{equation}

It remains to show that $D_c^2F_0/2$ has zero projective period. We exhibit a primitive. Write $\mathrm{vol}=\dd u_1\wedge\cdots\wedge\dd u_d$ and
\[
 \eta=\frac12(D_cF_0)\,\iota_{D_c}\mathrm{vol}.
\]
Then $\dd\eta=\tfrac12D_c^2F_0\,\mathrm{vol}$. On the critical hyperplane, $F_0$ has degree $2-d$, so $\mathcal L_E\eta=0$. Cartan's formula therefore gives the explicit projective primitive
\begin{equation}\label{eq:projective-primitive}
 \frac12D_c^2F_0\,\iota_E\mathrm{vol}
   =-\dd(\iota_E\eta).
\end{equation}
The identity holds on every branch of the density, hence also in the twisted de Rham complex.

To justify integration of this identity, first vary the exterior exponents independently. Keep them equal on each orbit of the two pair reflections and require their sum to be $d-2$. Give each adjacent-gap form $u_j$ total exponent $-4$, the sum $u_1+\cdots+u_d$ total exponent $5d-2$, and all other collapsed forms exponent zero. These totals are obtained by equal assignments within each root orbit. The resulting density is
\begin{equation}\label{eq:Stokes-witness}
 F_0^*(u)=\frac{\prod_{j=1}^{d}u_j^4}{(u_1+\cdots+u_d)^{5d-2}},
 \qquad \deg F_0^*=2-d.
\end{equation}
On the simplex, $u_1+\cdots+u_d=1$. After two derivatives, every coordinate still occurs to a positive power. The primitive therefore vanishes on every boundary face. The same bounds hold in a sufficiently small relative neighborhood of these exponents, since only finitely many linear forms occur and each is a sum of adjacent gaps. Stokes' theorem gives zero there. Meromorphic continuation along the hyperplane of total exponent $d-2$ gives the coefficientwise identity at the Witten parameters.

Together with \eqref{eq:jet-difference}, this proves the lemma. In the simple-pole cases, Theorem~\ref{thm:complete-A} excludes longer flags at $q_r$. At the exceptional ranks we use the flag formula to compute the leading coefficient.
\end{proof}

Let $J_{i,j}$ be the continued projective integral of $D_w^{-q_r}H$ when the double blocks occupy positions $i<j$. The original normal factors are $a^{-s}b^{-s}$. Summing the two quadratic terms in Lemma~\ref{lem:centering} over $a,b\geq1$ gives $\zeta(s-2)\zeta(s)$ and $\zeta(s)\zeta(s-2)$. The mixed term would give $\zeta(s-1)^2$, but its period is zero by that lemma. The remaining zeta factors are regular at $q_r$. Formula~\eqref{eq:single-support-rule}, whose diagonal denominator has slope $M_r$, gives
\begin{equation}\label{eq:raw-A-residue}
 \Res_{s=q_r}\xi_{A_r}(s)=
 \frac{q_r\zeta(q_r)\zeta(q_r-2)}{4M_r}
 \sum_{1\leq i<j\leq m}J_{i,j},\qquad r\notin\{12,20\}.
\end{equation}
Each summand corresponds to one nonadjacent pair of normal nodes, so no additional multiplicity is required. The double-pole cases are treated separately in Section~\ref{sec:cumulative}.

\subsection{Coordinates with the double blocks at zero and one}
Put $n=r-3$, the number of single blocks. Translate and scale the double-block centers to $0$ and $1$. Let $\mathcal D_{i,j}\subset\R^n$ consist of the ordered single-block coordinates: $i-1$ lie below zero, $j-i-1$ lie between zero and one, and $m-j$ lie above one. All the periods can now be written with the same density:
\begin{equation}\label{eq:fixed-heavy-integral}
 J_{i,j}=\AC\int_{\mathcal D_{i,j}}
 W_{q_r}(x)\left[4+\sum_{a=1}^n\left(\frac1{x_a^2}+\frac1{(1-x_a)^2}\right)\right]\dd x,
\end{equation}
where
\[
 W_s(x)=\prod_{a=1}^n|x_a|^{-2s}|1-x_a|^{-2s}
        \prod_{a<b}|x_a-x_b|^{-s}.
\]
The coordinates are ordered, so \eqref{eq:fixed-heavy-integral} has no factorial prefactor. To check the measure, let $t$ be the separation of the double-block centers before scaling. The coordinate Jacobian is $t^n$, and contraction with the radial vector field gives $t^{n+1}$. The density has the inverse power because $M_rq_r+2=n+1$. Thus the measure agrees with that in \eqref{eq:raw-A-residue}.

\Needspace{21\baselineskip}
\subsection{Removing the quadratic insertion by integration by parts}
Let $I_{i,j}(s)$ be the continued integral of $W_s$ on $\mathcal D_{i,j}$. We show that every inserted period is the same scalar multiple of the corresponding $I_{i,j}$. The key step is the following rational identity.

\begin{proposition}[Quadratic insertion identity]\label{prop:common-insertion}
Put $n=r-3$, $q=q_r$, and
\[
 p=\sum_{a=1}^n\frac1{x_a},\qquad
 \bar p=\sum_{a=1}^n\frac1{1-x_a},\qquad
 \kappa_n=\frac{8(n^2+5n+10)}{5(n^2+11n+4)}.
\]
For $\nabla_iV_i=\partial_iV_i+V_i\partial_i\log W_q$, define
\begin{equation}\label{eq:common-vector}
 V_i=-\frac{6(1-2x_i)
 +4\bigl((1-x_i)/x_i-x_i/(1-x_i)\bigr)
 +(1-x_i)p-x_i\bar p}{5(1+2q)}.
\end{equation}
Then, for $x_i\ne0,1$ and $x_i\ne x_j$ when $i\ne j$,
\begin{equation}\label{eq:common-divergence}
 4+\sum_i\left(\frac1{x_i^2}+\frac1{(1-x_i)^2}\right)-\kappa_n
 =\sum_i\nabla_iV_i.
\end{equation}
Consequently, whenever the continued periods are regular at the displayed parameters,
\begin{equation}\label{eq:common-period}
 J_{i,j}=\kappa_n I_{i,j}(q),\qquad 1\leq i<j\leq r-1.
\end{equation}
\end{proposition}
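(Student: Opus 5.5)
The proof has two parts: verify the rational identity \eqref{eq:common-divergence} by direct computation, then convert it into \eqref{eq:common-period} with the twisted Stokes principle of Section~\ref{sec:integral-conventions}.

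\textbf{Step 1: the divergence identity.} I would start from the logarithmic derivative
\[
 \partial_i\log W_q=-\frac{2q}{x_i}+\frac{2q}{1-x_i}-q\sum_{j\ne i}\frac{1}{x_i-x_j},
\]
and split $\sum_i\nabla_iV_i$ into three parts.

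The \emph{one-body part} is $\partial_iV_i$ together with $V_i\,(-2q/x_i+2q/(1-x_i))$. The derivatives of $p$ and $\bar p$ appearing in $\partial_iV_i$ contribute $-(1-x_i)/x_i^2-x_i/(1-x_i)^2$; this is the one place where diagonal terms from $p$ mix with the explicit $4((1-x_i)/x_i-x_i/(1-x_i))$ term.

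The \emph{two-body part} is $-q\sum_{i\ne j}V_i/(x_i-x_j)$. I would symmetrize it to
\[
 -\tfrac q2\sum_{i\ne j}\frac{V_i-V_j}{x_i-x_j}.
\]
Each difference quotient is explicit:
\begin{itemize}
\item $6(1-2x_i)$ contributes the constant $-12$;
\item $(1-x_i)p-x_i\bar p$ contributes $-(p+\bar p)$;
\item $(1-x_i)/x_i-x_i/(1-x_i)=1/x_i-1/(1-x_i)$ contributes $-1/(x_ix_j)-1/((1-x_i)(1-x_j))$.
\end{itemize}
The products $1/(x_ix_j)$ combine with the products $p/x_i$ from the one-body part into $p^2$ and $\sum_i x_i^{-2}$. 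The same happens for $1-x_i$, with $\bar p$ in place of $p$.

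I then collect the result in the basis
\[
 \Bigl\{1,\ \sum_i x_i^{-2},\ \sum_i (1-x_i)^{-2},\ p^2,\ \bar p^2,\ p,\ \bar p,\ p\bar p\Bigr\}.
\]
Mixed terms such as $\sum_i 1/(x_i(1-x_i))$ are rewritten as $\sum_i x_i^{-1}+\sum_i (1-x_i)^{-1}$ before collecting. The coefficients of $p^2$, $\bar p^2$, $p\bar p$, $p$ and $\bar p$ must vanish, and those of the two power sums must equal $1$ after dividing by $5(1+2q)$.

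The remaining constant is $-\kappa_n+4$. Here one substitutes the relation $q=2(n-1)/((n+3)^2+(n+3)-4)$, which comes from $r=n+3$. I expect this substitution to be what produces the rational function $\kappa_n$; it is a finite symbolic check.

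\textbf{Step 2: integrating the identity.} The $(n-1)$-form
\[
 \omega=\sum_i(-1)^{i-1}V_iW\,\dd x_1\wedge\cdots\widehat{\dd x_i}\cdots\wedge\dd x_n
\]
satisfies $\dd\omega=\sum_i\nabla_iV_i\,W\,\dd x$ on every branch. So the right-hand side of \eqref{eq:common-divergence} is twisted-exact.

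For $\mathcal D_{i,j}$, regarded as a loaded chamber, the argument follows Lemma~\ref{lem:centering}. I would regularize to a closed twisted cycle at nonresonant parameters, following \cite{Mimachi}; the exact form then integrates to zero there. Meromorphic continuation of the resulting identity $J_{i,j}-\kappa_nI_{i,j}=0$ then gives the statement at $q$, whenever both periods are regular there as assumed.

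Unbounded chambers, for $i>1$ or $j<m$, are handled in the same way after including $\infty$ as a divisor. The integrand decays like $|x|^{-2q\cdot 2-\cdots}$ only in continued form, so this must also be a regularized statement.

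\textbf{Main obstacle.} The identity \eqref{eq:common-divergence} is valid only at the specific exponent $q$, through $\kappa_n$ and the relation between $q$ and $n$. By contrast, the Stokes argument requires a parameter region with vanishing boundary terms, or a nonresonant point.

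I would first try to embed the identity in a family. Replace $W_q$ by the general Selberg-type density with exponents $-2s$ at $0$ and $1$, $-2s$ at infinity, and $-s$ between points. Then find a vector field $V_i(s)$ of the same shape with free coefficients, depending on $s$, whose divergence equals the insertion minus a constant $\kappa(s)$ on an open set of $s$. Twisted Stokes and continuation then apply directly, and specializing at $s=q$ recovers \eqref{eq:common-vector}.

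If no such family exists, the fallback is different. Observe that $q$ is nonresonant for the local system on each $\mathcal D_{i,j}$, since the exponents $-2q$ and $-q$ are not integers. Regularization then applies directly at $q$, and the exact form integrates to zero without any deformation.
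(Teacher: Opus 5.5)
\textbf{Overall strategy.} Your architecture matches the paper's: a pointwise divergence identity, read as a twisted-exact form, whose period vanishes on a regularized loaded chamber at generic parameters, followed by meromorphic continuation to $q$. Your ``first try'' of embedding the identity in a parameter family is exactly the paper's route.

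\textbf{The gap.} You leave the existence of that family open, and that existence is the whole content of the passage from \eqref{eq:common-divergence} to \eqref{eq:common-period}. Step~1 verifies the identity only at the single value $q_r$. That yields no identity at nearby nonresonant parameters, so there is nothing to continue.

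\textbf{The fallback does not close it.} Non-integrality of $-2q$ and $-q$ is not the genericity condition for the regularization. The exponents along all strata of the resolved arrangement met by the chamber also matter, including collision strata and strata at infinity. At the special values these can be integral on divergent strata. For example, take $r=20$, so $n=17$ and $q=1/13$. In $\mathcal D_{1,2}$, let $15$ single blocks tend to infinity together. The radial exponent, with the measure included, is
\[
15-\tfrac1{13}\bigl(4\cdot15+\tbinom{15}{2}+15\cdot2\bigr)=0 .
\]
This is a logarithmically divergent, resonant stratum, so direct regularization at $q$ is unavailable there. That is also why the statement carries the proviso ``whenever the continued periods are regular''.

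\textbf{How the paper supplies the family.} The missing step is elementary. Take
\[
W=\prod_i|x_i|^A|1-x_i|^B|\Delta(x)|^{2c}
\]
with independent $A,B,c$, and put $h_0=A+B+1+c(n-1)$ and $p_j=\sum_i x_i^{-j}$. The fields $U_i=1-x_i$, $Y_i=(1-x_i)/x_i$ and $T_i=(1-x_i)p_1$ have divergences
\[
 Ap_1-nh_0,\qquad (A-1+c)p_2-(A+B)p_1-cp_1^2,\qquad Ap_1^2-p_2-(nh_0-1)p_1 .
\]
Set
\[
\beta=\frac{A}{(A-1)(A+c)},\qquad \gamma=\frac{c}{(A-1)(A+c)},\qquad \alpha=\frac{A(A+B)+c(nh_0-1)}{A(A-1)(A+c)} .
\]
Then $\alpha U+\beta Y+\gamma T$ has divergence $p_2-\alpha nh_0$. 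Add its image under $x_i\mapsto 1-x_i$, $A\leftrightarrow B$. This gives the insertion identity at generic parameters, where twisted Stokes applies and the resulting period identity continues meromorphically.

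\textbf{Specializing to $q$.} On the line $A=B=-2s$, $c=-s/2$, one has $\beta=-4/(5(1+2s))$ and $\gamma=-1/(5(1+2s))$ for every $s$. Only $\alpha$ uses the relation $q(n^2+7n+8)=2(n-1)$. That relation turns $\alpha$ into $-6/(5(1+2q))$ and gives $4+2\alpha nh_0=\kappa_n$. So your worry that the identity ``is valid only at the specific exponent'' concerns only the coefficient $6$ and the constant $\kappa_n$, not the shape of the field. Once the generic identity is established, your Step~1 is a specialization of it and needs no separate check.
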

\begin{proof}
We derive three divergence identities at independent Selberg parameters and then eliminate the reciprocal sums. Write the density as $W=\prod |x_i|^A|1-x_i|^B|\Delta(x)|^{2c}$, where $\Delta(x)=\prod_{i<j}(x_i-x_j)$, put
$h_0=A+B+1+c(n-1)$, and let $p_j=\sum_i x_i^{-j}$. The divergences of the three vector fields with components
\[
 U_i=1-x_i,\qquad Y_i=(1-x_i)/x_i,\qquad T_i=(1-x_i)p_1
\]
are, respectively,
\begin{equation}\label{eq:three-divergences}
\begin{aligned}
 \sum_i\nabla_iU_i&=Ap_1-nh_0,\\
 \sum_i\nabla_iY_i&=(A-1+c)p_2-(A+B)p_1-cp_1^2,\\
 \sum_i\nabla_iT_i&=Ap_1^2-p_2-(nh_0-1)p_1.
\end{aligned}
\end{equation}
For example, the interaction in the second line is
$-2c\sum_{i<j}(x_ix_j)^{-1}=-c(p_1^2-p_2)$; the other pair terms follow by the same two-term grouping.

Set
\[
 \alpha=\frac{A(A+B)+c(nh_0-1)}{A(A-1)(A+c)},\qquad
 \beta=\frac{A}{(A-1)(A+c)},\qquad
 \gamma=\frac{c}{(A-1)(A+c)}.
\]
Multiplying the three lines of \eqref{eq:three-divergences} by $\alpha,\beta,\gamma$ and adding gives
\[
 \sum_i\nabla_i(\alpha U_i+\beta Y_i+\gamma T_i)
 =p_2-\alpha nh_0.
\]
For $A=B=-2q$, $c=-q/2$, and
$q=2(n-1)/(n^2+7n+8)$, the coefficients become
\[
 (\alpha,\beta,\gamma)=-\frac{(6,4,1)}{5(1+2q)}.
\]
Apply the generic identity a second time with $x_i$ replaced by $1-x_i$ and $A,B$ interchanged. The reflected vector field has an additional minus sign from differentiation. At $A=B=-2q$ the two fields sum to \eqref{eq:common-vector}, and their divergences add to $\sum_i\{x_i^{-2}+(1-x_i)^{-2}\}-2\alpha nh_0$. Finally
\[
 4+2\alpha nh_0=\kappa_n.
\]
This proves \eqref{eq:common-vector}--\eqref{eq:common-divergence}.

These rational identities hold at independent parameters. Integrate them on a regularized closed chamber, where each divergence has zero period, and then specialize the resulting meromorphic identity. The equality $A=B$ is used only at the final step. This proves \eqref{eq:common-period} without requiring a common convergence region for the unbounded real integrals.
\end{proof}

\subsection{Relations between the chamber integrals}
For $k\geq0$, define
\[
 [k]_h!=\prod_{\nu=1}^{k}\sin(\nu h),\qquad [0]_h!=1.
\]
\begin{lemma}[Relations among the type-$A$ chamber periods]\label{lem:chamber-ratios}
For $r\notin\{12,20\}$ and $h=h_r$, the two-pair chamber relations are
\begin{align}\label{eq:omega}
 J_{i,j}&=\omega_{i,j}^{(m)}(h)J_{1,2},\notag\\
 \omega_{i,j}^{(m)}(h)&=(-1)^{j-i-1}
 \frac{[3]_h![m+j-i+3]_h![m-2]_h![m+1]_h!}
 {[m+4]_h![j-i-1]_h![m-j]_h![m-i+2]_h![j+1]_h![i-1]_h!}.
\end{align}
The finite expressions $\omega_{i,j}^{(m)}$ are also defined at $r=12,20$, but the displayed period identity is used only outside those ranks. The double-pole coefficients are computed from flags in Section~\ref{sec:cumulative}.
\end{lemma}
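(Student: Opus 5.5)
By Proposition~\ref{prop:common-insertion}, $J_{i,j}=\kappa_n I_{i,j}(q_r)$. The plan is therefore to prove the relation $I_{i,j}(q)=\omega_{i,j}^{(m)}(h)I_{1,2}(q)$ for the plain Selberg-type chamber periods of $W_s$, and then multiply by $\kappa_n$. The density $W_s$ is a Selberg density with exponents $A=B=-2s$ and $c=-s/2$, written with $n=m-2$ free points and two fixed points at $0$ and $1$. It is convenient to view it as a density on $m$ points on the real line: two heavy points, of weight $2$, and $n$ light points. Adjacent light--light pairs carry the phase $e^{\pm\pi i c}$ from $|x_a-x_b|^{-s}$, which gives the parameter $h=\pi q/2$. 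Light--heavy crossings carry the phase $e^{\pm 2\pi i s}$. The chambers $\mathcal D_{i,j}$ are indexed by the positions $i<j$ of the heavy points in this ordered line.

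First, I would derive the standard twisted-cycle (contour-pushing) relations at generic $s$, in the regularized homology framework of \cite{Mimachi}. Take the chamber in which a given point is adjacent to a neighbour, and deform that coordinate into the upper and lower half-planes. Since each closed loaded cycle integrates exact forms to zero, each deformation gives a linear relation among adjacent chambers with trigonometric coefficients. Taking imaginary parts of the combination, as in Aomoto's derivation of the Selberg ratios, yields recurrences of the form
\[
 \sin(\cdot)\,I_{i,j}=\sin(\cdot)\,I_{i+1,j}+\cdots .
\]
These let one move a heavy point one step past a group of light points. Concretely, moving the block of light points lying between the heavy points across one heavy point gives a $q$-binomial-type factor $\sbinom{\cdot}{\cdot}$ in $\sin(\nu h)$. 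Iterating expresses $I_{i,j}$ as a product of such factors times $I_{1,2}$. The sign $(-1)^{j-i-1}$ comes from the orientation and branch choice of the $j-i-1$ light points trapped between $0$ and $1$.

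Second, I would verify the closed form by checking that $\omega_{i,j}^{(m)}$ satisfies the same recurrences in $i$ and in $j$, together with $\omega_{1,2}=1$. This is a finite identity among products $[k]_h!$, and it reduces to sine addition formulas. As a cross-check, at the summed or $q\to$ Selberg specialization, the full-line integral should factor through $S_n$ via \eqref{eq:Selberg}.

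Third, I would specialize the identity from generic $s$ to $s=q_r$. It holds meromorphically, so it suffices that $I_{1,2}$ and the $I_{i,j}$ be regular at $q_r$ and that no denominator $[k]_h!$ with $k\le m+4$ vanish. The latter requires $\nu h\notin\pi\Z$, that is, $\nu q_r/2\notin\Z$ for $\nu\le m+4$. I expect the main obstacle to be exactly this degeneration: at $r=12,20$ the reference periods acquire poles, since they correspond to the degree-zero $A_6$ and $A_5$ supports of Theorem~\ref{thm:complete-A}, and numerator sines vanish there as well. This is why the statement excludes those ranks.

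For the remaining ranks, I would check regularity using Theorem~\ref{thm:complete-A}. A pole of a chamber period at $q_r$ would correspond to a collapsing subcluster of points whose candidate value equals $q_r$. Such a collapse would give an additional support, and for $r\notin\{12,20\}$ Theorem~\ref{thm:complete-A} shows there is none. The relations then specialize to \eqref{eq:omega}.
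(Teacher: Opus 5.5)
Your outline follows the paper's main line. You reduce to the uninserted periods $I_{i,j}$ by Proposition~\ref{prop:common-insertion} and push one light coordinate into the upper and lower half-planes. You then combine the conjugate three-term relations (the paper's real-kernel step, your ``imaginary parts'') into the ratios $I_{a,b+1}/I_{a,b}$ and $I_{a+1,b+1}/I_{a,b}$, and check by substitution that $\omega_{i,j}^{(m)}$ satisfies them with $\omega_{1,2}^{(m)}=1$.

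The step you handle differently is the specialization at $s=q_r$. You get regularity of each chamber period from Theorem~\ref{thm:complete-A}. This is valid, but the translation you only assert needs care. In the slice with the double blocks at $0$ and $1$, a collision stratum in which gaps $T\subset S$ vanish gives a pole at $q_r$ only if $q(S\setminus T,2+\ell')=q_r$ for some $\ell'\geq0$. The shift by two occurs because $W_{q_r}$ has degree $-(d-2)$ rather than the critical $-d$. An escape of points to infinity gives $q(U,\ell)=q_r$ for the support $U$ of the escaping gaps. Theorem~\ref{thm:complete-A} covers every Taylor degree and has no proper sub-support of $S$ outside ranks $12,20$, so your argument closes.

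The paper needs no boundary analysis here. $J_{1,r-1}$ is a convergent Selberg integral and $\Omega_r(h_r)\neq0$, so $J_{1,2}=J_{1,r-1}/\Omega_r$ is finite, and then so is every $J_{i,j}=\omega_{i,j}J_{1,2}$. The paper's route is shorter and detects the exceptional ranks algebraically: a numerator factor $\sin(kh_r)$ of $\Omega_r$, with $k=19$ or $26$, vanishes. Yours explains them geometrically: the $A_6$ and $A_5$ outer supports make $J_{1,2}$ singular.

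Three small corrections:
\begin{enumerate}
\item The light--light crossing phase is $e^{\mp2\pi i c}=e^{\pm i\pi s}=T^{\pm2}$ with $T=e^{ih}$, not $e^{\pm\pi i c}$. Only the former produces sines of multiples of $h=\pi q/2$.
\item The denominators $[k]_h!$ with $k\leq r+3$ vanish for no $r\geq5$, since $(r+3)q_r<2$. You state this condition without checking it. The degeneration at $r=12,20$ lies entirely in numerator factors with indices between $r+4$ and $2r$, together with the pole of $I_{1,2}$.
\item The sign $(-1)^{j-i-1}$ is not an orientation or branch effect, because each $I_{i,j}$ is the continued integral of the positive density $W_s$. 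It is the minus sign in $I_{a,b+1}/I_{a,b}=-\sin((m+b-a+4)h)\sin((m-b)h)/\bigl(\sin((b-a)h)\sin((b+2)h)\bigr)$.
\end{enumerate}
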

\begin{proof}
By Proposition~\ref{prop:common-insertion}, it suffices to prove the relations for the uninserted periods $I_{i,j}$. Use the upper- and lower-half-plane contour relations for the Selberg density, as in \cite[equations (2.31)--(2.33), arXiv numbering]{FW}. These are relations between regularized loaded chambers. They hold at nonresonant parameters and continue meromorphically after their regularization denominators are cleared.

Remove one labeled single block. Suppose the double blocks occupy positions $a<b$ in the remaining ordered sequence, and put $T=e^{ih}$. Moving one single block past another gives phase $T^{-2}$; moving it past a double block gives phase $T^{-4}$. Insert the removed block before, between, or after the double blocks. The contour relation is
\[
 C_0^-I_{a+1,b+1}+C_1^-I_{a,b+1}+C_2^-I_{a,b}=0,
\]
where summing the phases over the possible positions of the single block gives:
\begin{align*}
 C_0^-&=T^{-(a-1)}\frac{\sin(ah)}{\sin h},&
 C_1^-&=T^{-(a+b+1)}\frac{\sin((b-a)h)}{\sin h},&
 C_2^-&=T^{-(b+m+3)}\frac{\sin((m-b)h)}{\sin h}.
\end{align*}
The lower-half-plane relation is the conjugate relation. Their real kernel has coordinate ratios
\[
 -\frac{\sin((m-a+2)h)}{\sin(ah)}:
 \frac{\sin((m+b-a+4)h)}{\sin((b-a)h)}:
 -\frac{\sin((b+2)h)}{\sin((m-b)h)}.
\]
The two recurrences connect adjacent positions in the array $1\leq i<j\leq m$. Starting with $(1,2)$ determines each row and then the next row. Substitution verifies that the displayed $\omega_{i,j}^{(m)}$, with $\omega_{1,2}^{(m)}=1$, satisfies both recurrences. This proves \eqref{eq:omega}. Its denominator factors have indices at most $r+3$, and
\[
 (r+3)q_r<2.
\]
They are consequently nonzero at the stated points. The quantity $\Omega_r$ in \eqref{eq:Sigma-Omega} is the coefficient relating the positive reference chamber $J_{1,r-1}$ to $J_{1,2}$; it is nonzero outside ranks $12$ and $20$, as checked in Section~\ref{sec:A-evaluation}. Multiplication by the common $\kappa_n$ returns the relations for $J_{i,j}$.
\end{proof}
Write
\begin{equation}\label{eq:Sigma-Omega}
 \Sigma_r(h)=\sum_{i<j}\omega_{i,j}^{(r-1)}(h),\qquad
 \Omega_r(h)=\omega_{1,r-1}^{(r-1)}(h)
 =(-1)^{r-3}\frac{[2r]_h![3]_h!}{[r+3]_h![r]_h!}.
\end{equation}
Removing the insertion in the chamber $J_{1,r-1}$, where all single blocks lie between the double blocks, gives a convergent reference integral.

\section{Type \texorpdfstring{$A$}{A}: residue evaluation}\label{sec:A-evaluation}
\subsection{A finite sine identity}
\begin{theorem}[Sum of the type-$A$ chamber weights]\label{thm:sine}
For $r\geq5$ and $h=h_r$,
\begin{equation}\label{eq:Sigma-product}
 \Sigma_r(h)=
 -2^{r-2}\frac{[r-3]_h!\sin^3(2h)\sin(3h)}
 {\sin((r+1)h)\sin((r+2)h)\sin((r+3)h)}<0.
\end{equation}
\end{theorem}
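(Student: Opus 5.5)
We will prove \eqref{eq:Sigma-product} by evaluating the finite double sum $\Sigma_r(h)=\sum_{1\le i<j\le m}\omega^{(m)}_{i,j}(h)$, $m=r-1$, as an identity valid for \emph{generic} $h$. We then specialize to $h=h_r$. The first step is to rewrite $\omega^{(m)}_{i,j}$ in terms of the symmetric sine binomials $\sbinom{n}{k}=[n]_h!/([k]_h![n-k]_h!)$. With $k=j-i-1$ (the number of single blocks between the double blocks), $i-1$ below and $m-j$ above, the weight becomes
\[
 \omega^{(m)}_{i,j}=(-1)^{k}\,c_m(h)\,
 \frac{[m+k+4]_h!}{[k]_h!\,[m-j]_h!\,[m-i+2]_h!\,[j+1]_h!\,[i-1]_h!},
\]
where $c_m(h)$ is independent of $i,j$. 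We sum first over the position of the block of $k$ interior points, with $k$ fixed. This inner sum over $i$ with $j=i+k+1$ is a terminating sum of products of two sine binomials in $i$. We evaluate it by the $q$-Chu--Vandermonde identity, using $T=e^{ih}$ and $\sin(\nu h)=(T^\nu-T^{-\nu})/2i$, so that $[n]_h!$ is a $T^2$-factorial up to explicit powers of $T$ and $2i$. The inner sum should collapse to a single product of sines depending on $k$.

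The outer sum over $k$ is then an alternating terminating sum. We plan to recognise it as a balanced ${}_3\phi_2$ or a $q$-Pfaff--Saalschütz sum in base $T^2$, or to telescope it directly. As a fallback, we can avoid closed-form summation altogether. The contour relations of Lemma~\ref{lem:chamber-ratios} are linear in the array, and summing the upper-half-plane relation over all $(a,b)$ for $m-1$ points yields a linear relation between $\Sigma$ at ranks $r$ and $r-1$. Both recurrences hold identically in $h$ because $\omega$ satisfies them formally. Using this, we would prove \eqref{eq:Sigma-product} by induction on $r$ as a rational identity in $T$, checking the base case $r=5$ ($m=4$, six terms) by hand. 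In either route, the identity is an equality of trigonometric polynomials after clearing denominators. It therefore holds for all $h$ and, in particular, at $h_r$, where none of the denominators vanish because $(r+3)q_r<2$.

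We expect the main obstacle to be this summation step: matching the alternating sum to a standard basic-hypergeometric evaluation with the correct normalization of powers of $T$. The closed form has the unusual factor $\sin^3(2h)\sin(3h)$ and a prefactor $2^{r-2}$, which we expect to arise from converting $T^2$-Pochhammers back to sines. We would first try the inductive route. To test it, we would confirm numerically (symbolically in $T$) the cases $r=5,6,7$ and make sure the conjectured recurrence reproduces the ratio of consecutive right-hand sides. That ratio is $2\sin((r-3)h)\sin((r+1)h)/\sin((r+4)h)\cdots$ with $h$ also varying with $r$. The dependence of $h$ on $r$ is the reason to prove the generic-$h$ identity first.

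The sign claim is elementary. At $h=h_r=\pi q_r/2$ we have $(r+3)h<\pi$. Hence every factor $\sin(\nu h)$ with $1\le\nu\le r+3$ is positive, so $[r-3]_h!$, $\sin^3(2h)\sin(3h)$ and the three denominator sines are all positive. Combined with the prefactor $-2^{r-2}$, this gives $\Sigma_r(h_r)<0$.
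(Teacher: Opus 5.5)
There is a genuine gap: the closed form \eqref{eq:Sigma-product} is \emph{not} an identity in $h$, so it cannot be proved for generic $h$ and then specialized. Both of your routes, direct summation and induction on $r$, aim at a statement that is false.

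A parity count shows this. Each weight $\omega^{(m)}_{i,j}(h)$ has $3m+j-i+5$ sine factors in both numerator and denominator, so $\Sigma_r$ is an even function of $h$. The right-hand side has $r-2$ net sine factors, so it is odd in $h$ when $r$ is odd. Concretely, for $r=5$ the six weights give
\[
 \Sigma_5(h)=2+\frac{\sin 9h\,\sin 10h}{\sin 4h\,\sin 5h}
 +\frac{\sin 2h\,(\sin 5h-2\sin 9h)}{\sin h\,\sin 4h}
 =-8h^2+O(h^4),
\]
while the right-hand side is $-\tfrac87h^3+O(h^5)$. The two sides agree at $h_5=\pi/26$, where both are about $-2.746\times10^{-3}$, but not identically. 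Your proposed symbolic check in $T$ for $r=5,6,7$ would therefore fail.

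Separately, summing the three-term contour relation over $(a,b)$ does not relate $\Sigma$ at ranks $r$ and $r-1$. All chambers in that relation have the same rank, and its coefficients depend on $(a,b)$.

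The missing idea is that the arithmetic of $h_r$, namely $(r^2+r-4)h_r=\pi(r-4)$, must enter the evaluation. The paper first shows, for all $h$, that $\Sigma_r$ equals $-\frac{[3]_h![r-3]_h!}{[r+3]_h![r]_h!}$ times the value at $z=-1$ of $\prod_{\ell=-2}^{r}\frac{t^{\ell}\tau-t^{-\ell}\tau^{-1}}{2i}$ applied to $R(z)=B_r(z)B_r(z^{-1})$. Since $B_r(-t^{2p-r-3})=0$ for $2\le p\le r+1$, only four terms survive. Their coefficients are built from $t^{\pm U}$ and $t^{\pm(U-r+2)}$, with $U=(r-2)(r+3)/2$. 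Only the relation $Uh_r=\pi(r-4)/2-h_r$ turns these into multiples of $\sin h$ and $\sin((r-1)h)$. The identity $\sin^2((r+1)h)-\sin((r+3)h)\sin((r-1)h)=\sin^2 2h$ then gives the product.

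Your inner summation step also fails as stated. At fixed $j-i$, the sum over $i$ is, up to an $i$-independent factor, the symmetric convolution $\sum_a\sbinom{r}{a}\sbinom{r}{a+k}$ with $a=i-1$ and $k=j-i+2$. Writing $\sbinom{n}{b}=Q^{-b(n-b)/2}\binom{n}{b}_Q$ with $Q=T^2$ leaves an extra factor $Q^{\pm ra}$ relative to $q$-Chu--Vandermonde. Such convolutions are not sine products in general; for instance, $\sum_a\sbinom{2}{a}^2=4+2\cos 2h$.

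Your sign argument from $(r+3)h_r<\pi$ is correct.
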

\begin{proof}
The proof has three steps: rewrite the double chamber sum as a convolution, encode that convolution in a Laurent polynomial, and apply a finite difference operator that leaves only four terms. Put $t=e^{ih}$ and
\[
 B_r(z)=\sum_{a=0}^r\frac{[r]_h!}{[a]_h![r-a]_h!}z^a
       =\prod_{\nu=0}^{r-1}(1+zt^{2\nu-r+1}).
\]
The product formula is the finite binomial theorem after symmetrizing its phases. Define
\[
 C_k=\sum_{a=0}^{r-k}
 \frac{[r]_h!}{[a]_h![r-a]_h!}
 \frac{[r]_h!}{[a+k]_h![r-a-k]_h!}.
\]
In the sum of \eqref{eq:omega}, put $a=i-1$ and $k=j-i+2$. This gives the explicit convolution
\begin{equation}\label{eq:sine-convolution}
 \Sigma_r(h)=\frac{[3]_h![r-3]_h!}{[r+3]_h![r]_h!}
 \sum_{k=3}^r(-1)^{k-3}\frac{[r+k]_h!}{[k-3]_h!}C_k.
\end{equation}
Let $\tau f(z)=f(tz)$ and $R(z)=B_r(z)B_r(z^{-1})$. The coefficient of $z^k$ in $R$ is $C_{|k|}$. Each operator factor below multiplies $z^k$ by $\sin((k+\ell)h)$. The product therefore annihilates every monomial with $-r\leq k\leq2$. Evaluating the remaining monomials at $z=-1$ rewrites \eqref{eq:sine-convolution} as
\begin{equation}\label{eq:finite-operator}
 \Sigma_r(h)=-\frac{[3]_h![r-3]_h!}{[r+3]_h![r]_h!}
 \left[\prod_{\ell=-2}^{r}
 \frac{t^{\ell}\tau-t^{-\ell}\tau^{-1}}{2i}\,R\right](-1).
\end{equation}
All sums and products are finite, so this rearrangement requires no convergence argument.

Set $Q=t^2$ and use $(a;Q)_p=\prod_{j=0}^{p-1}(1-aQ^j)$, with $(a;Q)_0=1$. The coefficient of $\tau^{2p-r-3}$ in the operator is
\[
 d_p=\frac{(-1)^{r+3+p}t^{-U+p(r-2)}}{(2i)^{r+3}}
       \frac{[r+3]_h!}{[p]_h![r+3-p]_h!},
 \qquad U=\frac{(r-2)(r+3)}2.
\]
For $2\leq p\leq r+1$,
\[
 B_r(-t^{2p-r-3})=(Q^{p-r-1};Q)_r=0.
\]
Only $p=0,1,r+2,r+3$ remain. The two paired values of $R$ are
\[
 4^r\left(\frac{[r+1]_h!}{\sin h}\right)^2,
 \qquad 4^r([r]_h!)^2.
\]
Using $(r^2+r-4)h=\pi(r-4)$, their paired coefficients simplify to
\[
 d_0+d_{r+3}=2^{-r-2}\sin h,
 \qquad d_1+d_{r+2}=-2^{-r-2}
 \frac{\sin((r+3)h)\sin((r-1)h)}{\sin h}.
\]
Substitution in \eqref{eq:finite-operator}, followed by
\[
 \sin^2((r+1)h)-\sin((r+3)h)\sin((r-1)h)=\sin^2(2h),
\]
gives \eqref{eq:Sigma-product}. All its sine factors have arguments in $(0,\pi)$, so the sign is negative.
\end{proof}

\subsection{Evaluation of the reference integral}
\begin{proposition}[The reference chamber integral]\label{prop:endpoint}
Put $n=r-3$. The inserted period with all single blocks between the double blocks is
\begin{equation}\label{eq:Jend}
 J_{1,r-1}=\frac{\kappa_n}{n!}
 S_n(1-2q_r,1-2q_r,-q_r/2)>0.
\end{equation}
\end{proposition}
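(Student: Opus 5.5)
We combine Proposition~\ref{prop:common-insertion} with the Selberg integral. In the chamber $\mathcal D_{1,r-1}$ the double blocks occupy the first and last positions, so all $n=r-3$ single-block coordinates lie in $(0,1)$ and are ordered $0<x_1<\cdots<x_n<1$. Formula \eqref{eq:common-period} gives $J_{1,r-1}=\kappa_n I_{1,r-1}(q_r)$, provided the uninserted period is regular at $q_r$; we check this below.

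To evaluate $I_{1,r-1}(q_r)$, we identify it as an ordered Selberg integral. On $(0,1)^n$ the absolute values in $W_s$ are the ordinary ones, and $W_s(x)=\prod_a x_a^{-2s}(1-x_a)^{-2s}\prod_{a<b}|x_a-x_b|^{-s}$. This is the Selberg density with $a-1=-2s$, $b-1=-2s$ and $2c=-s$, that is, $a=b=1-2s$ and $c=-s/2$. By the convention after \eqref{eq:Selberg}, the integral over one ordered chamber is $S_n/n!$. This gives $I_{1,r-1}(s)=S_n(1-2s,1-2s,-s/2)/n!$, and setting $s=q_r$ yields \eqref{eq:Jend}.

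The main obstacle is to show that the integral converges absolutely at $s=q_r$, so that the continued period equals the genuine integral and no regularization subtlety arises. Since $0<q_r<2/r<1/2$, the boundary exponents satisfy $a=b=1-2q_r>0$. The pair exponent is $2c=-q_r$, so we also need the collision conditions $1+jc>0$ for all sub-collisions. Aomoto's criterion requires $\operatorname{Re}c>-\min\{1/n,\,\operatorname{Re}a/(n-1),\,\operatorname{Re}b/(n-1)\}$. Here $q_r/2<1/n$ holds because $q_r n=2(r-4)(r-3)/(r^2+r-4)<2$. The condition $q_r/2<(1-2q_r)/(n-1)$ reduces to $q_r(n+3)<2$, that is, $q_r r<2$, which holds since $2r(r-4)<2(r^2+r-4)$. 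These also cover the collisions of points with the endpoints together with a cluster. Hence the integral converges and equals the product \eqref{eq:Selberg}.

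For positivity, the integrand is a positive function on the open chamber and the integral converges, so $I_{1,r-1}(q_r)>0$. As a consistency check, every Gamma argument in \eqref{eq:Selberg} is positive under the inequalities above. Finally, $\kappa_n=8(n^2+5n+10)/(5(n^2+11n+4))>0$. The product is therefore positive, and regularity at $q_r$ is automatic because the integral converges on a neighbourhood of $q_r$.
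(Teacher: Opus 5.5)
Your proof is correct and follows the paper's own argument. You use Proposition~\ref{prop:common-insertion} to reduce to the ordered Selberg integral $S_n/n!$ on $0<x_1<\cdots<x_n<1$, and you check convergence through the same two conditions, $a+(n-1)c>0$ and $1+nc>0$; these are your inequalities $q_r r<2$ and $q_r n<2$. Positivity then follows from the positive density and $\kappa_n>0$, and your remark that convergence persists on a neighbourhood of $q_r$ makes explicit the regularity needed to invoke \eqref{eq:common-period}.
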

\begin{proof}
In this chamber $0<x_1<\cdots<x_n<1$. Removing the insertion leaves the Selberg integral $S_n/n!$. Proposition~\ref{prop:common-insertion} gives \eqref{eq:Jend}; the inserted integral is interpreted by meromorphic continuation.

To check ordinary convergence of the uninserted integral, set
$a=b=1-2q_r$, $c=-q_r/2$, and $D=n^2+7n+8$. Then
\[
 a+(n-1)c=b+(n-1)c=\frac{5n+11}{D}>0,
 \qquad 1+nc=\frac{8(n+1)}D>0.
\]
For $c<0$ these inequalities imply all endpoint and interior collision conditions for Selberg's integral. Its density is positive in the interior. Since $\kappa_n>0$, the continued inserted period is positive as well.
\end{proof}

\subsection{Simple poles and the exceptional ranks}
When $\Omega_r(h_r)\ne0$, equation~\eqref{eq:omega} gives
\[
 \sum_{i<j}J_{i,j}=\frac{\Sigma_r(h_r)}{\Omega_r(h_r)}J_{1,r-1}.
\]
Combining this identity with \eqref{eq:raw-A-residue} and \eqref{eq:Jend} gives
\begin{equation}\label{eq:A-residue}
 \boxed{\displaystyle
 R_r:=\Res_{s=q_r}\xi_{A_r}(s)=
 \frac{q_r\zeta(q_r)\zeta(q_r-2)}{4M_r}
 \frac{\Sigma_r(h_r)}{\Omega_r(h_r)}
 \frac{\kappa_{r-3}}{(r-3)!}
 S_{r-3}(1-2q_r,1-2q_r,-q_r/2).}
\end{equation}
The two zeta factors are negative for $0<q_r<1$, so their product is positive. The endpoint and $\Sigma_r$ are nonzero. A zero of $\Omega_r$ can occur only when $kq_r=2$ for $r+4\leq k\leq2r$. Since
\[
 \frac{2}{q_r}=r+5+\frac{16}{r-4},
\]
the only valid ranks are $r=12$ and $r=20$; the vanishing sine factors have indices $19$ and $26$. Theorem~\ref{thm:complete-A} then proves the simple-pole assertion in every other rank.

At ranks $12$ and $20$, formula~\eqref{eq:A-residue} has a zero denominator and cannot be specialized. We compute the double-pole coefficients from the flags in Section~\ref{sec:cumulative}.

\subsection{Face integrals for a connected normal complement}\label{sec:cumulative}
The double-pole coefficients in \eqref{eq:A12-main}--\eqref{eq:A20-main} require the following convergent face integrals. We evaluate them before applying the two-step residue formula.

Let $R>L\geq1$, put $k=R-L$, and in this subsection take $I=\{1,\ldots,R\}$. Define
\[
 J_{L,j}=\{j,j+1,\ldots,j+L-1\}\subseteq\{1,\ldots,R\},
 \qquad U_{L,j}=I\setminus J_{L,j},\quad 1\leq j\leq k+1.
\]
For $u_i>0$ with $\sum_{i\in U_{L,j}}u_i=1$, set
\begin{align}
 Q_{R,L,j}(u)&=
 \prod_{\substack{1\leq a\leq b\leq R\\{}[a,b]\not\subseteq J_{L,j}}}
 \left(\sum_{i\in[a,b]\cap U_{L,j}}u_i\right),\label{eq:outer-Q}\\
 \cP_{R,L,j}(s)&=\int_{\sum u_i=1}Q_{R,L,j}(u)^{-s}\dd u,
 \qquad \cP_{R,L}^{\mathrm{sum}}=\sum_{j=1}^{k+1}\cP_{R,L,j}.
 \label{eq:outer-period}
\end{align}
The simplex measure is the one obtained by eliminating one gap coordinate; a zero-dimensional simplex has measure one. The number of factors in \eqref{eq:outer-Q} is
\[
 n_{R,L}=\binom{R+1}{2}-\binom{L+1}{2}
 =\frac{(R-L)(R+L+1)}2.
\]
The critical exponent is consequently $q=2/(R+L+1)$. For $T\subseteq I$, let $N_R(T)$ denote the number of positive roots of $A_R$ whose support meets $T$.

\begin{lemma}[Convergence of the outer face integrals]\label{lem:exposure}
At this $q$, every integral $\cP_{R,L,j}(q)$ is ordinarily convergent and strictly positive.
\end{lemma}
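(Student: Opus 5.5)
The plan is to treat $\cP_{R,L,j}(q)$ as an ordinary projective integral of a positive homogeneous density and to verify the standard power-counting criterion for products of linear forms with nonnegative coefficients. First, a degree check: the integrand $Q_{R,L,j}^{-q}$ lives on the $k$ gap variables $u_i$, $i\in U_{L,j}$, and has degree $-q\,n_{R,L}=-k$. So the integral over $\sum u_i=1$ with the coordinate measure is a well-defined projective integral in the sense of Section~\ref{sec:integral-conventions}. Every linear form in \eqref{eq:outer-Q} is a sum of gap coordinates with coefficients $0$ or $1$, and it is nonzero because $[a,b]\not\subseteq J_{L,j}$. Hence the integrand is strictly positive in the open simplex, and positivity follows once convergence is known.

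For convergence we will use the usual criterion for such monomial-type integrals, obtained by a blow-up along the coordinate flats (as in the resolved charts behind \eqref{eq:local-form}). The integral converges provided that, for every nonempty proper subset $T\subsetneq U_{L,j}$, the number $c(T)$ of forms that vanish identically when $u_i=0$ for all $i\in T$ satisfies $q\,c(T)<|T|$. Such a form corresponds to an interval $[a,b]\not\subseteq J_{L,j}$ with $[a,b]\cap U_{L,j}\subseteq T$. Equivalently, $[a,b]$ is an interval inside $T\cup J_{L,j}$ that meets $T$. Only proper $T$ matter, because the full set is the projective direction already removed by homogeneity. Interior collisions cause no trouble, since all coefficients are nonnegative and the forms only vanish on coordinate faces.

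The key step is the count. Decompose $T\cup J_{L,j}$ into maximal consecutive runs in $\{1,\ldots,R\}$. At most one run contains the block $J_{L,j}$; say it has $t_1$ nodes from $T$. Any other run has $t_i$ nodes, all from $T$. The intervals inside the first run that meet $T$ number $\binom{L+t_1+1}{2}-\binom{L+1}{2}=t_1(t_1+2L+1)/2$. A run without $J$ contributes $\binom{t_i+1}{2}=t_i(t_i+1)/2$, which is at most $t_i(t_i+2L+1)/2$. Summing over runs and using $t_i\leq|T|$ gives
\[
 c(T)\leq \sum_i \frac{t_i(t_i+2L+1)}{2}\leq \frac{|T|(|T|+2L+1)}2 .
\]
Since $|T|\leq k-1=R-L-1$, we have $|T|+2L+1\leq R+L<R+L+1$, hence $q\,c(T)=2c(T)/(R+L+1)<|T|$.

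The main obstacle is stating the convergence criterion rigorously, because several faces can degenerate simultaneously. I would handle this by covering the simplex with the standard nested-coordinate charts, one for each ordering of the $u_i$. In such a chart each form is comparable to its largest variable, so the integrand is bounded by a monomial in the successive ratios. The exponent attached to the chain of sets $T_1\subsetneq T_2\subsetneq\cdots$ is then exactly $|T_m|-q\,c(T_m)-1>-1$ for each member of the chain, which gives integrability chart by chart.
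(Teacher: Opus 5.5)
Your proof is correct and takes the paper's route. The paper also reduces convergence to the boundary condition for each proper subset of gap coordinates, handles nested faces by ordered gap sectors, and bounds the number of vanishing factors by the intervals in a set of $L+|T|$ nodes minus those inside $J_{L,j}$, which is your estimate $c(T)\le |T|(|T|+2L+1)/2$. The difference is only bookkeeping: the paper indexes the face by the surviving coordinates and bounds $N_R$ of that set from below via $\binom{c+1}{2}$ for its complement, whereas you count intervals through the maximal runs of $T\cup J_{L,j}$ and write out the sector exponents explicitly.
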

\begin{proof}
For a proper nonempty $T\subsetneq U_{L,j}$, let $c=R-|T|>L$ be the size of its complement. The complement has at most $\binom{c+1}{2}$ positive roots. Hence
\[
 \frac{|T|}{N_R(T)}\leq\frac{2}{R+c+1}<\frac{2}{R+L+1}=q.
\]
Indeed, let $V=U_{L,j}\setminus T$ be the coordinates tending to zero at that boundary. Exactly $N_R(U_{L,j})-N_R(T)$ restricted root factors vanish there. Since $qN_R(U_{L,j})=|U_{L,j}|$, the displayed inequality is equivalent to $q\{N_R(U_{L,j})-N_R(T)\}<|V|$. This is the boundary integrability condition. Checking every proper $T$ also checks nested boundary faces, for instance by ordered gap sectors. The density is strictly positive in the interior.
\end{proof}

\begin{theorem}[Explicit sum of connected-complement face periods]\label{thm:outer-sum}
Let $q=2/(R+L+1)$, $h=\pi q/2$, and $k=R-L$. Then
\begin{equation}\label{eq:outer-individual}
 \cP_{R,L,j}(q)=\sbinom{k}{j-1}E_{R,L},
 \qquad
 E_{R,L}=\frac{S_{k-1}(1-q,(k-1)q/2,-q/2)}{(k-1)!}.
\end{equation}
In particular,
\begin{equation}\label{eq:outer-sum}
 \boxed{\displaystyle
 \cP_{R,L}^{\mathrm{sum}}(q)=
 \frac{2^k S_{k-1}(1-q,(k-1)q/2,-q/2)}{(k-1)!}
 \prod_{\nu=0}^{k-1}\cos\!\left(\frac{(2\nu-k+1)\pi q}{4}\right).}
\end{equation}
Here $S_0=1$ and $\sbinom{k}{a}=[k]_h!/([a]_h![k-a]_h!)$. Every displayed factor in the period sum is positive.
\end{theorem}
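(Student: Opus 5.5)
The plan is to identify each face integral $\cP_{R,L,j}(q)$ with a one-heavy-block chamber period of the same kind as in Section~\ref{sec:A-geometry}. Then I would compute one reference chamber as a Selberg integral, get the others from the same contour relations used in Lemma~\ref{lem:chamber-ratios}, and sum them with the finite binomial theorem from the proof of Theorem~\ref{thm:sine}.

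\textbf{Step 1: chamber form.} On the face $U_{L,j}$ the $L$ gaps in $J_{L,j}$ vanish. The $R+1$ cumulative points therefore collapse into one heavy block of size $L+1$ and $k$ single blocks. There are $j-1$ singles to the left of the heavy block and $k+1-j$ to its right. The factors in \eqref{eq:outer-Q} are the pairwise differences of these blocks. A single–single pair contributes exponent $1$ and a single–heavy pair contributes exponent $L+1$. The total degree is $k(k-1)/2+k(L+1)=n_{R,L}$.

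For the reference chamber $j=1$, translate the heavy block to $0$ and scale so that the farthest single sits at $1$. This leaves $k-1$ ordered singles $0<x_1<\dots<x_{k-1}<1$ with density
\[
 \prod_a x_a^{-(L+1)q}(1-x_a)^{-q}\prod_{a<b}|x_a-x_b|^{-q}.
\]
The measure check is the same as in the $J_{i,j}$ reduction: the coordinate Jacobian and the radial contraction together give $t^{k}$, and the density supplies the inverse power because $qn_{R,L}=k$.

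The relation $q(k+2L+1)=2$ gives $1-(L+1)q=(k-1)q/2$. Hence the Selberg parameters are $a=(k-1)q/2$, $b=1-q$, $c=-q/2$. Using $S_{k-1}(a,b,c)=S_{k-1}(b,a,c)$, this yields $\cP_{R,L,1}=E_{R,L}$. By the reflection $x\mapsto1-x$, which reverses the block order, the same holds for $j=k+1$. Lemma~\ref{lem:exposure} guarantees that these are ordinary convergent integrals, so no continuation subtleties arise at this step.

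\textbf{Step 2: chamber relations.} I would apply the upper- and lower-half-plane contour relations of \cite{FW}, exactly as in the proof of Lemma~\ref{lem:chamber-ratios}, now with a single heavy block. Remove one single block and let the heavy block occupy a given position among the remaining $k-1$ singles. Sum the phases obtained by inserting the removed single in each gap. Passing another single gives phase $e^{\pm i\pi q}=e^{\pm2ih}$, and passing the heavy block gives $e^{\pm i\pi(L+1)q}$.

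Taking the real combination of the two conjugate relations should give a two-term recurrence of the form
\[
 \cP_{R,L,j+1}\sin((j)h)=\cP_{R,L,j}\sin((k-j+1)h).
\]
Here $(L+1)q=1-(k-1)q/2$ is used to convert the heavy-block phase into a shift of the sine argument. This recurrence is solved by $\sbinom{k}{j-1}E_{R,L}$. The sine denominators are nonzero because $kh<\pi$, which follows from $kq<2$.

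I expect Step 2 to be the main obstacle. It requires getting the exact phase bookkeeping right so that the heavy-block phase cancels to leave the $h$-binomial. If the direct kernel computation is messy, I would instead verify the candidate $\sbinom{k}{j-1}$ by substitution into both conjugate relations, as is done for $\omega^{(m)}_{i,j}$. Because all $\cP_{R,L,j}(q)$ converge, the relations can be used at $q$ itself after clearing regularization denominators.

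\textbf{Step 3: summation and positivity.} Summing over $j$ gives $E_{R,L}\sum_a\sbinom{k}{a}=E_{R,L}B_k(1)$. By the product formula in the proof of Theorem~\ref{thm:sine},
\[
 B_k(1)=\prod_{\nu=0}^{k-1}\bigl(1+t^{2\nu-k+1}\bigr),\qquad t=e^{ih}.
\]
Pairing $\nu$ with $k-1-\nu$ cancels the phases, and $1+e^{i\theta}=2\cos(\theta/2)e^{i\theta/2}$ then gives $2^k\prod_\nu\cos((2\nu-k+1)\pi q/4)$. This is \eqref{eq:outer-sum}.

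For positivity, each cosine argument has modulus at most $(k-1)\pi q/4<\pi/2$, since $(k-1)q<2$. Each sine factor in $\sbinom{k}{a}$ has argument in $(0,\pi)$, since $kh<\pi$. Finally, $E_{R,L}>0$ because it is a convergent integral of a positive density.
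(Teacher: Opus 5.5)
Your proposal is correct and follows the paper's proof step for step. It uses the same Selberg evaluation of the endpoint chamber (with the symmetry $S_{k-1}(a,b,c)=S_{k-1}(b,a,c)$ and reflection for $j=k+1$), the same recurrence $\cP_{R,L,j+1}\sin(jh)=\cP_{R,L,j}\sin((k-j+1)h)$ from the Forrester--Warnaar chamber relations, and the same finite $h$-binomial summation and positivity checks.

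The only difference is that the paper quotes Forrester--Warnaar's ready-made mixed-chamber recurrence (2.33), after fixing the rightmost single at $1$ and substituting $x\mapsto1-x$, whereas you rederive it from the contour relation. Your derivation goes through if you fix the heavy block at $0$ and a labeled single at $1$: each insertion region is then the same face chamber in another section, so its period depends only on how many singles lie left of the heavy block. A single upper-half-plane relation then suffices, because the relative phase between the two chamber coefficients is $e^{-i(k+2L+1)h}=-1$.
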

\begin{proof}
The collapsed block contains $L+1$ points; each of the other $k$ blocks contains one. First place the collapsed block at the left endpoint, fix its center at $0$, and fix the rightmost single block at $1$. This coordinate choice gives the simplex measure in \eqref{eq:outer-period}. The other $k-1$ single-block centers have density
\[
 \prod_i x_i^{-(L+1)q}(1-x_i)^{-q}
 \prod_{i<j}(x_j-x_i)^{-q},\qquad 0<x_1<\cdots<x_{k-1}<1.
\]
Since $1-(L+1)q=(k-1)q/2$, symmetry of the Selberg product proves the endpoint value in \eqref{eq:outer-individual}.

We next compare the possible positions of the collapsed block. Write $n=k-1$ and
\[
 a=1-q,\qquad b=nq/2,\qquad c=-q/2.
\]
Fix the collapsed-block center at zero. Unless it is the rightmost block, fix the rightmost single-block center at one. Replacing each remaining coordinate by $1-x$ gives a mixed Selberg chamber with $p$ variables in $(0,1)$ and $n-p$ in $(1,\infty)$. Normalize its integral by $p!(n-p)!$. The mixed-chamber recurrence \cite[(2.33), arXiv numbering]{FW} is
\[
 \frac{K_p}{K_{p-1}}=
 \frac{\sin\pi(n-p+1)c\,\sin\pi(a+b+(n+p-2)c)}
 {\sin\pi pc\,\sin\pi(a+(p-1)c)}
 =\frac{\sin((n-p+1)h)}{\sin((p+1)h)}.
\]
Successive positions of the collapsed block therefore have the ratios of
$\sbinom{k}{j-1}$. Reflection supplies the final endpoint, whose period equals the first. This proves \eqref{eq:outer-individual}. The recurrence is a meromorphic identity; at the present parameters all the actual face integrals converge by Lemma~\ref{lem:exposure} and no sine denominator vanishes.

To sum the ratios, put $Q=e^{2ih}$. The finite $q$-binomial identity gives
\[
 \sum_{a=0}^{k}\sbinom{k}{a}
 =\prod_{\nu=0}^{k-1}\bigl(1+e^{i(2\nu-k+1)h}\bigr)
 =2^k\prod_{\nu=0}^{k-1}\cos((2\nu-k+1)h/2).
\]
Indeed, the coefficient of degree $a$ in the first product is
$e^{i(a^2-ak)h}\genfrac{[}{]}{0pt}{}{k}{a}_{Q}=\sbinom{k}{a}$, so no infinite-series identity at a root of unity is involved. This proves \eqref{eq:outer-sum}.

Finally, $kh<\pi$ and $|(2\nu-k+1)h/2|<\pi/2$. The Selberg integral is positive and convergent: its most restrictive endpoint parameter is $b+(n-1)c=q/2>0$, and its interior collision bound is $1+nc>0$. The case $k=1$ gives two point periods, as required.
\end{proof}

The case $L=1$ recovers the second-wall sum in \cite{Walls}. Here we need the connected complements $A_6$ and $A_5$ that occur in ranks $12$ and $20$.

\Needspace{15\baselineskip}
\subsection{A two-step residue formula}
The relative-face formula \cite[Theorems 5.3--5.4]{Generic} gives the following coefficient. The hypothesis is an upper bound on the pole order, so the complementary residue is allowed to vanish.

\begin{lemma}[Coefficient of a two-step flag]\label{lem:two-step}
Let $U$ be a degree-zero outer support with $N(U)=n_0$ and connected normal complement $\Phi_J$. Assume its face period $\cP_U(q)$ converges. Suppose $\xi_{\Phi_J}$ has pole order at most one at $q$, and all its possibly contributing supports $T$ have the same root count $p_0$. Write $c_T$ for the numerator attached to $T$. Assume the relative-face formula gives $\cP_U(q)c_T$ as the numerator for the corresponding ambient flag. Then the sum of the length-two coefficients over $U$ is
\begin{equation}\label{eq:cumulative-simple}
 a_{-2}^{(U)}=\frac{p_0}{n_0(n_0+p_0)}
     \cP_U(q)\Res_{s=q}\xi_{\Phi_J}(s).
\end{equation}
The identity holds also when $\Res_q\xi_{\Phi_J}=0$. It gives the full double-pole coefficient after summing over all outer faces, provided no other flag of length at least two contributes.
\end{lemma}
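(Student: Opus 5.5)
The plan is to read off the length-two Laurent coefficient directly from Proposition~\ref{prop:Laurent}, applied once in the ambient system and once in $\Phi_J$, and to match the two through the assumed relative-face factorization. For a possibly contributing support $T$ of $\Phi_J$, the ambient flag attached to $U$ and $T$ is the chain $U\subsetneq U\cup T$. I first record the two root counts. The first is $N(U)=n_0$. For the second, a positive root of $\Phi$ meets $U\cup T$ exactly when it either meets $U$ or lies in $\Psi_J^+$ and meets $T$. Hence
\[
 N(U\cup T)=n_0+N_J(T)=n_0+p_0,
\]
and this count is the same for every $T$ by hypothesis. Both steps satisfy the candidate equation: $U$ has degree zero, so $n_0q=|U|$, and the inner step reduces to $p_0q-|T|+\ell_T=0$ in $\Phi_J$.

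Next I apply Proposition~\ref{prop:Laurent} with $m=2$ in the ambient system. A flag of length exactly two contributes only through the value of its numerator at $q$, namely
\[
 \frac{C_{\mathfrak f}(q)}{n_0(n_0+p_0)}.
\]
By the hypothesis on the relative-face formula \cite[Theorems 5.3--5.4]{Generic}, this numerator is $C_{\mathfrak f}(q)=\cP_U(q)c_T$. The convergence of $\cP_U(q)$ is what allows the outer factor to be evaluated as an honest number rather than a continued finite part. Summing over all $T$ gives
\[
 a_{-2}^{(U)}=\frac{\cP_U(q)}{n_0(n_0+p_0)}\sum_T c_T.
\]

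The remaining step is to identify $\sum_T c_T$ inside $\Phi_J$. Because $\xi_{\Phi_J}$ has pole order at most one at $q$, the expansion \eqref{eq:local-form} for $\Phi_J$ contains no flag of length two or more with nonzero total. Proposition~\ref{prop:Laurent} with $m=1$ then gives
\[
 \Res_{s=q}\xi_{\Phi_J}=\frac{1}{p_0}\sum_T c_T.
\]
This holds because every contributing support has denominator $N_J(T)=p_0$, and the residue is read off from the numerators at $q$. No division by the residue is involved, so the identity remains valid when the residue is zero. Substituting gives \eqref{eq:cumulative-simple}. The final clause then follows from Proposition~\ref{prop:Laurent} once more: if no other flag of length at least two is incident at $q$, the $(s-q)^{-2}$ coefficient is exactly the sum of the $a_{-2}^{(U)}$ over the outer faces $U$.

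I expect the main obstacle to be the consistency of normalizations. The $c_T$ appearing in the ambient numerator must be the same numerators, with the same boundary subtractions, that appear in the local expansion of $\xi_{\Phi_J}$ itself. Individual numerators depend on the chosen resolution, and only their sums at each Laurent order are canonical. My plan here is to use a single resolution for $\Phi_J$ and induce the ambient charts from it by the relative-face construction, so that the two sums are taken over identical index sets. I would also use the hypothesis of a common $p_0$ to pull the inner denominator out of the sum; without it, the sum would not collapse to a multiple of the residue.
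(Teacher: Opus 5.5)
Your proposal is correct and follows the paper's proof essentially step for step: the root counts $n_0$ and $n_0+p_0$ for the chain $U\subsetneq U\cup T$, the assumed numerator $\cP_U(q)c_T$, the sum over $T$, and the identification $\sum_T c_T=p_0\Res_{s=q}\xi_{\Phi_J}(s)$ without dividing by the residue. One justification should be corrected. The pole-order bound does not by itself exclude length-two flags in $\Phi_J$, since such flags would still contribute derivative terms $C_{\mathfrak f}'(q)$ to the residue even when their leading total vanishes; what excludes them is the common root count $p_0$, because strictly nested supports have strictly increasing root counts, so no two contributing supports $T$ can lie in one flag.
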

\begin{proof}
Within the complementary subsystem, the coefficient attached to $T$ is $c_T/p_0$. Thus
$\Res_q\xi_{\Phi_J}=\sum_T c_T/p_0$; individual terms may cancel. The corresponding ambient flag is $U\subsetneq U\cup T$. Its root counts are $n_0$ and $n_0+p_0$: the second count still includes all roots meeting $U$. The relative-face residue numerator is $\cP_U(q)c_T$. Hence its diagonal coefficient is
\[
 \frac{\cP_U(q)c_T}{n_0(n_0+p_0)}.
\]
Summing this equality proves \eqref{eq:cumulative-simple}, without dividing by the complementary residue. In the applications below the outer Taylor order is zero: its residue factors as the ordinary outer period times the complementary root-system function. Taking a residue in that complementary function gives the numerator factorization assumed in the lemma.
\end{proof}

\begin{proof}[Completion of the proof of Theorem~\ref{thm:main}]
For rank $12$, the complement is $A_6$. The root counts are $n_0=78-21=57$ and $p_0=21-2=19$. Lemma~\ref{lem:two-step} gives the multiplier $1/228$.

For rank $20$, the complement is $A_5$. Here $n_0=210-15=195$ and $p_0=15-2=13$, so the multiplier is $1/3120$.

Theorem~\ref{thm:complete-A} lists all flags at these points. Equation~\eqref{eq:cumulative-simple} therefore gives \eqref{eq:A12-main} and \eqref{eq:A20-main}, and Theorem~\ref{thm:outer-sum} evaluates their outer periods.

Formula~\eqref{eq:A-residue} gives $R_6>0$ and $R_5<0$. Since the outer periods are positive, both double-pole coefficients are nonzero. No flag has length three, so both poles have order exactly two.
\end{proof}

\section{Chamber sums for singleton complements}\label{sec:cancellation}
We now evaluate the chamber sums associated with one normal coordinate in types $B$, $C$, and $D$. Each sum factors into a reference Selberg integral and a finite connection sum. A zero of the second factor may cancel a pole of the first. We compute both orders before evaluating their product; Section~\ref{sec:actual-singletons} then identifies the resulting Witten coefficients.

\subsection{Mixed Selberg integrals and the reference period}
Put $n=r-2$, and let $\varepsilon=1$ for types $B,C$ and $\varepsilon=0$ for type $D$. The Selberg parameters for this calculation are
\[
 a_\varepsilon(s)=\frac{1-\varepsilon s}{2},\qquad
 b(s)=1-2s,\qquad c(s)=-s/2.
\]
For $0\leq p\leq n$, define
\begin{equation}\label{eq:mixed-cube}
 K_p(s)=\frac1{p!(n-p)!}\AC\int_{(0,1)^p\times(1,\infty)^{n-p}}
 \prod_i x_i^{a_\varepsilon(s)-1}|1-x_i|^{b(s)-1}
 \prod_{i<j}|x_i-x_j|^{2c(s)}\dd x.
\end{equation}
Thus $K_p$ is the integral over one chamber with the coordinates ordered separately in the two intervals. We suppress $\varepsilon$ in $K_p$; it is always the same as in $F_\varepsilon$. Let $\widetilde K_p=p!(n-p)!K_p$. In \cite[(2.33), arXiv numbering]{FW}, the recurrence for $\widetilde K_p/\widetilde K_{p-1}$ contains the factor $p/(n-p+1)$. Since
\[
 \frac{K_p}{K_{p-1}}=\frac{n-p+1}{p}\,
 \frac{\widetilde K_p}{\widetilde K_{p-1}},
\]
that combinatorial factor cancels in our ordered-chamber normalization. The recurrence is therefore
\begin{equation}\label{eq:singleton-ratio}
 \rho_p(s):=\frac{K_p(s)}{K_{p-1}(s)}
 =-\frac{\sin((n-p+1)\theta)\cos((n+p+2+\varepsilon)\theta)}
 {\sin(p\theta)\cos((p-1+\varepsilon)\theta)},\qquad \theta=\pi s/2.
\end{equation}
Use $K_0$, the chamber with all variables in $(1,\infty)$, as the reference period. Substituting $x_i\mapsto1/x_i$ gives
\begin{equation}\label{eq:singleton-endpoint}
 K_0(s)=\frac1{n!}
 S_n\!\left(-\frac12+(n+1+\varepsilon/2)s,1-2s,-s/2\right).
\end{equation}
The sum over these chambers is therefore the meromorphic product
\begin{equation}\label{eq:KF}
 \sum_{p=0}^{n}K_p(s)=K_0(s)F_\varepsilon(s),
 \qquad F_\varepsilon(s)=\sum_{p=0}^{n}\prod_{j=1}^{p}\rho_j(s).
\end{equation}
We call $F_\varepsilon$ the finite connection factor. The subscript records whether the parameters are those of type $D$ ($\varepsilon=0$) or of types $B,C$ ($\varepsilon=1$).

\subsection{Evaluation of the normalized \texorpdfstring{$B/C$}{B/C} sum}
Evaluating $F_1$ before specializing $s$ will let us determine its order of vanishing.

\begin{proposition}[Closed form of the $B/C$ connection factor]\label{prop:BC-meromorphic-product}
For $n\geq2$, set $M=(n+1)(n+3)$ and $\theta=\pi s/2$. The function $F_1$ in \eqref{eq:KF} satisfies
\begin{equation}\label{eq:BC-meromorphic-product}
 F_1(s)=\frac12\cos(M\theta-n\pi/2)\,
 \frac{\sin((n+2)\theta)\prod_{j=2}^{n}\sin(j\theta)}
 {\cos((n+3)\theta)\prod_{j=1}^{n+1}\cos(j\theta)}.
\end{equation}
The equality is meromorphic in $s$.
\end{proposition}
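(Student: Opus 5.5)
The plan is to evaluate the finite sum $F_1(s)=\sum_{p=0}^{n}\prod_{j=1}^{p}\rho_j(s)$ in closed form for generic $s$ and then continue meromorphically. Since every ingredient is a trigonometric polynomial ratio, it suffices to prove the identity for $\theta$ in an open set avoiding all zeros of denominators. First I will write the partial products explicitly. From \eqref{eq:singleton-ratio} with $\varepsilon=1$, the products telescope:
\[
 \prod_{j=1}^{p}\rho_j=(-1)^p\frac{\prod_{j=1}^{p}\sin((n-j+1)\theta)}{\prod_{j=1}^{p}\sin(j\theta)}\cdot
 \frac{\prod_{j=1}^{p}\cos((n+j+3)\theta)}{\prod_{j=1}^{p}\cos(j\theta)}.
\]
The first factor is a sine-binomial $\sbinom{n}{p}$ with $h=\theta$. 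The second factor is a ratio of cosine products, which after the substitution $\cos x=\sin(x+\pi/2)$ becomes a shifted $q$-Pochhammer ratio. With $t=e^{i\theta}$ and $Q=t^2$, the summand is therefore a terminating ${}_2\phi_1$-type term in base $Q$: numerator parameters $Q^{-n}$ and $-Q^{n+4}$, denominator parameter $-Q$, up to explicit phase factors.

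Second, I will sum this terminating series with the $q$-Chu--Vandermonde identity. The expected evaluation is
\[
 {}_2\phi_1(Q^{-n},b;c;Q,Q)=\frac{(c/b;Q)_n}{(c;Q)_n}\,b^n,
\]
or the variant with argument $cQ^n/b$, whichever matches the phases. Here $c/b=Q^{-n-3}$ and $(c;Q)_n=(-Q;Q)_n$. Converting back, $(-Q;Q)_n$ becomes $\prod_{j=1}^{n}2\cos(j\theta)$ times a phase. Also $(Q^{-n-3};Q)_n$ becomes $\prod_{j=4}^{n+3}2\sin(j\theta)$, or equivalently a product ending at $n+3$. This should match \eqref{eq:BC-meromorphic-product} after multiplying numerator and denominator by suitable factors such as $\sin(2\theta)=2\sin\theta\cos\theta$ and $\sin((n+3)\theta)$ against $\cos((n+3)\theta)$. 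If Chu--Vandermonde does not land directly on the stated form, I would use the fallback described next.

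In that case I would proceed by induction on $n$. Both sides are explicit, so I would check a recurrence in $n$ and verify the base case $n=2$ directly; that base case is a three-term sum.

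The main obstacle is bookkeeping. The phases, the factor $\tfrac12$, and the term $\cos(M\theta-n\pi/2)$ with $M=(n+1)(n+3)$ must all come out correctly. This cosine arises as the real part of the accumulated phase $t^{\pm M}$: the Vandermonde evaluation gives a complex expression, and $F_1$ is real. So I expect to need to pair the sum with its conjugate, as in the proof of Theorem~\ref{thm:sine}, writing $F_1=\tfrac12(\Phi+\bar\Phi)$. The product structure then yields $\cos$ of the total phase $M\theta$, shifted by $n\pi/2$ from the factors $i^n$. I will track these phases carefully, then confirm the result at one sample value (for example $n=2$ symbolically) to fix any sign. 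Finally, meromorphy follows because both sides are meromorphic in $s$ and agree on an open set.
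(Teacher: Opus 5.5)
\textbf{There is a genuine gap at the summation step.} Your parameter identification is correct, but the phases are not a free adjustment. Tracking them exactly with $\sin(k\theta)=t^{-k}(Q^k-1)/(2i)$ and $\cos(k\theta)=t^{-k}(1+Q^k)/2$ gives
\[
 \prod_{j=1}^{p}\rho_j=\frac{(Q^{-n};Q)_p(-Q^{n+4};Q)_p}{(Q;Q)_p(-Q;Q)_p}\,Q^{-p}.
\]
So $F_1(s)=f(Q^{-1})$ with $f(Z)={}_2\phi_1(Q^{-n},-Q^{n+4};-Q;Q,Z)$, and the argument is fixed at $Z=Q^{-1}$.

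The two $q$-Chu--Vandermonde sums apply only at $Z=Q$ and at $Z=cQ^n/b=Q^{-3}$. They give $f(Q)=BP$ and $f(Q^{-3})=P$, where $P=(Q^{-n-3};Q)_n/(-Q;Q)_n$ and $B=(-1)^nQ^{n(n+4)}$. Neither variant ``matches.'' The target formula already signals this. Besides $P$, it contains the factor $(1-Q^2)(1-Q^3)/\bigl((1-Q^{2n+2})(1-Q^{2n+6})\bigr)$. It also contains $\cos(M\theta-n\pi/2)$, which up to a phase is the two-term quantity $1+(-1)^nQ^M=1+BQ^3$, not a product of Pochhammer factors. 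Your explanation of the cosine as a real part, obtained by pairing with the conjugate, cannot work. $F_1$ is already real for real $s$, so $\tfrac12(\Phi+\bar\Phi)=\Phi$ adds nothing. The cosine records a genuine linear combination of the two distinct Chu--Vandermonde values.

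\textbf{The missing ingredient} is a way to move the argument from the summable points to $Q^{-1}$. The paper uses the three-term $q$-difference equation
\[
 Q(Q^3Z-1)f(QZ)+(Q^{-n}-Q^{n+4})Zf(Z)+(Q-Z)f(Z/Q)=0,
\]
which is checked by comparing coefficients of $Z^p$. Evaluate it at $Z=Q^{-2},Q^{-1},1$. This gives three linear equations in $f(Q^{-1}),f(Q^{-2}),f(1)$, with the known inputs $f(Q^{-3})=P$ and $f(Q)=BP$. Eliminating $f(Q^{-2})$ and $f(1)$ yields
\[
 f(Q^{-1})=P\,\frac{(1-Q^2)(1-Q^3)(1+BQ^3)}{D^2-(Q^3-Q)^2},\qquad D=Q^{-n}-Q^{n+4},
\]
which converts to the stated sine--cosine form.

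Your fallback does not close the gap. Induction on $n$ needs an explicit recurrence for the left-hand side. Both $Q^{-n}$ and $-Q^{n+4}$ move with $n$, and the phase $M$ is quadratic in $n$, so no such recurrence is evident. Producing one is essentially the whole problem.
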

\begin{proof}
Put $Q=e^{i\pi s}$ and write $(a;Q)_p=\prod_{j=0}^{p-1}(1-aQ^j)$, with $(a;Q)_0=1$. Define
\[
 f(Z)=\sum_{p=0}^{n}
 \frac{(Q^{-n};Q)_p(-Q^{n+4};Q)_p}{(Q;Q)_p(-Q;Q)_p}Z^p,
 \qquad F_1(s)=f(Q^{-1}).
\]
The finite $q$-Chu--Vandermonde identities \cite[17.6.2--17.6.3]{DLMF} give
\[
 f(Q^{-3})=P,\qquad f(Q)=BP,
 \quad P=\frac{(Q^{-n-3};Q)_n}{(-Q;Q)_n},
 \quad B=(-1)^nQ^{n(n+4)}.
\]
No restriction $|Q|<1$ is required for these rational identities after termination. With $D=Q^{-n}-Q^{n+4}$, coefficient comparison in the finite polynomial gives
\[
 Q(Q^3Z-1)f(QZ)+DZf(Z)+(Q-Z)f(Z/Q)=0.
\]
\begin{samepage}
Evaluate this identity at $Z=Q^{-2},Q^{-1},1$. Dividing by $P$ and writing $x=f(Q^{-1})/P$, $y=f(Q^{-2})/P$, $z=f(1)/P$, the three equations are
\begin{align*}
 Q(Q-1)x+DQ^{-2}y+Q-Q^{-2}&=0,\\
 Q(Q^2-1)z+DQ^{-1}x+(Q-Q^{-1})y&=0,\\
 Q(Q^3-1)B+Dz+(Q-1)x&=0.
\end{align*}
\end{samepage}
Eliminating $y,z$ gives
\[
 x=\frac{(1-Q^2)(1-Q^3)(1+BQ^3)}{D^2-(Q^3-Q)^2}.
\]
Since
\[
 D^2-(Q^3-Q)^2=Q^{-2n}(1-Q^{2n+2})(1-Q^{2n+6}),
\]
we obtain
\begin{equation}\label{eq:BC-Q-product}
 F_1(s)=\frac{Q^{2n}(1-Q^2)(1-Q^3)}
 {(1-Q^{2n+2})(1-Q^{2n+6})}
 \frac{(Q^{-n-3};Q)_n}{(-Q;Q)_n}
 \bigl(1+(-1)^nQ^M\bigr).
\end{equation}
Converting the finite factors to sines and cosines gives \eqref{eq:BC-meromorphic-product}. The derivation holds where the denominators are nonzero and extends elsewhere by meromorphic continuation.
\end{proof}

Let $\ell=2m$ satisfy $2\leq\ell<n+1$. For each $\varepsilon\in\{0,1\}$ set
\[
 M_\varepsilon=n(n+3+\varepsilon)+1+2\varepsilon,
 \qquad q_\varepsilon=\frac{n+1-\ell}{M_\varepsilon},\qquad \theta_\varepsilon=\pi q_\varepsilon/2.
\]
\begin{theorem}[Connection factors at the shifted values]\label{thm:singleton}
The two families satisfy
\begin{align}
 F_1(q_1)&=0,\label{eq:BC-zero}\\
 F_0(q_0)&=\frac{(-1)^{m+1}}4
 \frac{\sin(2\theta_0)\prod_{j=1}^{n}\sin(j\theta_0)}
 {\prod_{j=1}^{n+2}\cos(j\theta_0)}\ne0.\label{eq:D-nonzero}
\end{align}
If the $B/C$ reference period $K_0$ is holomorphic at $q_1$, then \eqref{eq:BC-zero} makes the chamber sum zero. If $K_0$ has a pole there, one must add the orders of the two factors in \eqref{eq:KF}.
\end{theorem}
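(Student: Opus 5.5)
For the $B/C$ family we will specialize the closed form of Proposition~\ref{prop:BC-meromorphic-product} directly. The target is to show that $\cos(M\theta_1-n\pi/2)$ vanishes at $\theta_1=\pi q_1/2$ while the remaining factors stay finite. Here $M_1=n(n+4)+3=(n+1)(n+3)=M$, so $M\theta_1=\pi(n+1-\ell)/2$. Hence
\[
 M\theta_1-\frac{n\pi}{2}=\frac{(1-\ell)\pi}{2}=\frac{\pi}{2}-m\pi,
\]
and the cosine is zero because $\ell=2m$ is even. This is where the parity of $\ell$ enters.

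It then suffices to check that the denominator in \eqref{eq:BC-meromorphic-product} does not vanish. The denominator factors are $\cos(j\theta_1)$ with $j\le n+3$. Since $\ell\ge2$, we have $(n+3)q_1\le (n+3)(n-1)/M<1$, so each argument lies in $(0,\pi/2)$. Thus $F_1(q_1)=0$ and the zero is of order at least one. The closing sentences of the theorem are then the bookkeeping of \eqref{eq:KF}: if $K_0$ is holomorphic the product vanishes, and otherwise the orders add.

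For type $D$ the route is the same, but we first need a closed form for $F_0$. We will repeat the $q$-hypergeometric argument with $\varepsilon=0$. From \eqref{eq:singleton-ratio}, $F_0=f_0(Q^{-1})$, where $f_0$ is the terminating series with numerator parameters $(Q^{-n};Q)_p(-Q^{n+3};Q)_p$ over $(Q;Q)_p(-Q^{0};Q)_p$ after normalizing. The shift in the cosine arguments must be matched carefully, and $(-1;Q)_p$ contributes the factor $2$ at $p=1$; this is a routine check of $\prod\rho_j$. Two $q$-Chu--Vandermonde evaluations at nearby powers of $Q$, together with the three-term contiguity relation in $Z$, then give $F_0$ in closed form, eliminating the auxiliary values as before.

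We expect this to yield
\[
 F_0(s)=\tfrac14\bigl(\text{sine/cosine product}\bigr)\bigl(1\pm (-1)^{n}Q^{M_0}\bigr),
\]
with $M_0=n(n+3)+1$. At $q_0$ the last factor becomes $\cos\bigl(\pi(n+1-\ell)/2-\text{shift}\bigr)$. This should now be $\pm\sin$ of an integer multiple of $\pi/2$ with odd parity, namely $(-1)^{m+1}$, rather than zero. That produces \eqref{eq:D-nonzero}.

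Nonvanishing then follows from the range of the arguments. The largest is $(n+2)q_0\le (n+2)(n-1)/M_0<1$, so all $j\theta_0\in(0,\pi/2)$. Hence every sine is positive and every cosine is positive.

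The main obstacle is deriving the $\varepsilon=0$ closed form. Getting the exact contiguity relation and the two Chu--Vandermonde endpoint values right, including the degenerate $(-1;Q)_p$ base, is the delicate part. If that proves awkward, we will instead derive $F_0$ by continuity from a one-parameter deformation of the $B/C$ computation, replacing $\varepsilon$ by a continuous parameter in both the series and the elimination, and then specialize to $\varepsilon=0$.
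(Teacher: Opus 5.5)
Your $B/C$ half is correct. Specializing Proposition~\ref{prop:BC-meromorphic-product} works because $M\theta_1-n\pi/2=\pi/2-m\pi$ and every denominator argument lies in $(0,\pi/2)$. This is a valid shortcut for the paper's re-elimination at the specialized point.

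The type-$D$ half has a genuine gap: the general-$s$ closed form you expect does not exist. For $\varepsilon=0$ the contiguity relation \eqref{eq:singleton-qdiff} keeps the middle term $(Q^\varepsilon-Q)f(Z)=(1-Q)f(Z)$, which vanishes only for $\varepsilon=1$. That vanishing is what made the $B/C$ elimination collapse onto the single factor $1+BQ^3$. With $\varepsilon=0$ the Chu--Vandermonde values are $f_0(Q^{-3})=P$ and $f_0(Q)=BP$, where $B=(-1)^nQ^{n(n+3)}$. Eliminating $f_0(Q^{-2})$ and $f_0(1)$ from the relations at $Z=Q^{-2},Q^{-1},1$ expresses $f_0(Q^{-1})/P$ as a $B$-independent rational function times
\[
 (1-Q)(B+Q^{-3})+(Q^{-n}-Q^{n+3})Q^{-3}(1+QB).
\]
This does not factor through a single $1\pm(-1)^nQ^{M_0}$. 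Concretely, for $n=2$ a direct computation gives
\[
 F_0(s)=4\sin^2\theta\,(1+\cos2\theta-\cos4\theta-\cos6\theta-\cos8\theta).
\]
The second factor is not a product of sines and cosines, since as a polynomial in $Q$ it has no cyclotomic factor. So your proposed mechanism does not exist: there is no last factor that becomes $\pm1$ at $q_0$ and supplies $(-1)^{m+1}$. Your fallback, deforming the $B/C$ computation in $\varepsilon$, fails for the same reason, because that computation uses $Q^\varepsilon=Q$.

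The missing idea is to specialize before eliminating. At $s=q_0$ one has $Q^{M_0}=(-1)^{n+1}$, hence $B=-Q^{-1}$. Then $1+QB=0$, and the numerator above reduces to $(1-Q)(1-Q^2)Q^{-3}$. Only then does the elimination yield
\[
 f_0(Q^{-1})=(-1)^nQ^{-n(n+1)/2}\frac{(Q;Q)_n(1-Q^2)}{(-1;Q)_{n+3}}.
\]
Converting this to sines and cosines leaves the phase $Q^{-M_0/2}=i^{-(n+1)}(-1)^m$, which is the source of the sign $(-1)^{m+1}$. Your closing range check $(n+2)q_0<1$ then gives the nonvanishing in \eqref{eq:D-nonzero}.
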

\begin{proof}
Fix $\varepsilon$, write $q=q_\varepsilon$, and put $Q=e^{i\pi q}$. Equation~\eqref{eq:singleton-ratio} gives
\[
 F_\varepsilon(q)=f_\varepsilon(Q^{-1}),\qquad
 f_\varepsilon(Z)=\sum_{p=0}^{n}
 \frac{(Q^{-n};Q)_p(-Q^{n+3+\varepsilon};Q)_p}
 {(Q;Q)_p(-Q^\varepsilon;Q)_p}Z^p.
\]
Coefficient comparison gives the polynomial identity
\begin{align}
 &Q^\varepsilon(Q^3Z-1)f_\varepsilon(QZ)
 +\{Q^\varepsilon-Q+(Q^{-n}-Q^{n+3+\varepsilon})Z\}f_\varepsilon(Z)
 \notag\\*[-1mm]
 &\hspace{40mm}+(Q-Z)f_\varepsilon(Z/Q)=0.\label{eq:singleton-qdiff}
\end{align}
The terminating $q$-Chu--Vandermonde evaluations, together with $Q^{M_\varepsilon}=(-1)^{n+1}$, give
\[
 f_\varepsilon(Q^{-3})=P,\qquad
 f_\varepsilon(Q)=-Q^{-(1+2\varepsilon)}P,
 \qquad P=\frac{(Q^{-n-3};Q)_n}{(-Q^\varepsilon;Q)_n}.
\]
Substitution of $Z=Q^{-2},Q^{-1},1$ in \eqref{eq:singleton-qdiff} and elimination of the two intermediate values gives $f_1(Q^{-1})=0$ and
\[
 f_0(Q^{-1})=
 (-1)^nQ^{-n(n+1)/2}
 \frac{(Q;Q)_n(1-Q^2)}{(-1;Q)_{n+3}}.
\]
Converting each factor to a sine or cosine proves \eqref{eq:D-nonzero}. All manipulations are finite. The denominators are nonzero because $0<q$ and $(n+2)q<1$ in the specified range; the latter follows from $(n+2)(n-1)<M_\varepsilon$. The same inequalities prove the nonvanishing in \eqref{eq:D-nonzero}.

The final assertion follows by multiplying the Laurent expansions of $K_0$ and $F_\varepsilon$.
\end{proof}

\subsection{When the \texorpdfstring{$B/C$}{B/C} chamber sum is nonzero}
We next combine the zero of $F_1$ with the poles of the reference period $K_0$. The conclusion concerns their chamber sum. The corresponding Witten residue also requires the quadratic insertion and the other supports, treated in Section~\ref{sec:actual-singletons}.

\begin{theorem}[Nonzero values of the $B/C$ chamber sum]\label{thm:BC-complete-endpoint}
Let $n\geq2$, let $\ell=2m$ satisfy $2\leq\ell<n+1$, and put
\[
 M=(n+1)(n+3),\qquad q=\frac{n+1-\ell}{M},\qquad k=\frac1q.
\]
The connection factor $F_1$ has a simple zero at $q$. Here $K_0$ is the $B/C$ reference period, so $\varepsilon=1$ in \eqref{eq:singleton-endpoint}. The meromorphic chamber sum
$\mathcal K_n(s)=K_0(s)F_1(s)$ is holomorphic at $q$, and
\begin{equation}\label{eq:BC-exception-criterion}
 \mathcal K_n(q)\ne0
 \quad\Longleftrightarrow\quad
 k\in\Z\ \text{and}\ n+4\leq k\leq2n+3.
\end{equation}
If the condition holds, put $j_0=2n+3-k$. Then
\begin{align}
 \mathcal K_n(q)&=\bigl(\Res_{s=q}K_0(s)\bigr)F_1'(q),
 \label{eq:BC-exception-value}\\
 \sgn\mathcal K_n(q)&=(-1)^{n-j_0+m}.
 \label{eq:BC-exception-sign}
\end{align}
In every other case $\mathcal K_n$ has a simple zero at $q$.
\end{theorem}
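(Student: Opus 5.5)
The plan is to compute the order and value of each factor in $\mathcal K_n=K_0F_1$ at $q$ separately, using the closed forms already available: Proposition~\ref{prop:BC-meromorphic-product} for $F_1$, and the gamma product \eqref{eq:Selberg} for $K_0$ via \eqref{eq:singleton-endpoint}. The criterion \eqref{eq:BC-exception-criterion} should then come out exactly as the condition for $K_0$ to have a pole at $q$.

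\emph{Step 1: $F_1$ has a simple zero.} Put $\theta=\pi q/2$, so $M\theta=\pi(n+1-\ell)/2$. Then $M\theta-n\pi/2=\pi/2-m\pi$, and the factor $\cos(M\theta-n\pi/2)$ in \eqref{eq:BC-meromorphic-product} vanishes at $q$. Its $s$-derivative there is $-\tfrac{M\pi}{2}\sin(\pi/2-m\pi)=(-1)^{m+1}M\pi/2\neq0$, so this zero is simple. The remaining factors in \eqref{eq:BC-meromorphic-product} are finite and nonzero at $q$: since $q\le(n-1)/M$ and $(n+3)(n-1)<M$, we have $(n+3)q<1$. Hence every $j\theta$ with $j\le n+3$ lies in $(0,\pi/2)$. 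All sines and cosines are therefore positive, and
\[
 \sgn F_1'(q)=(-1)^{m+1}.
\]

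\emph{Step 2: poles of $K_0$.} In \eqref{eq:singleton-endpoint} the parameters are $a=-\tfrac12+(n+\tfrac32)s$, $b=1-2s$ and $c=-s/2$. I would go through the five gamma families of \eqref{eq:Selberg} at $s=q$, where $0<q<1/(n+3)$.
\begin{itemize}
\item $b+jc=1-(2+j/2)q$ and $1+(j+1)c$ lie in $(0,1)$, so these gammas are finite and positive.
\item $\Gamma(1+c)>0$.
\item The denominator arguments $a+b+(n+j-1)c=\tfrac12+\tfrac{n-j}{2}q$ are positive, so the denominators are finite and nonzero.
\item The only delicate family is $\Gamma(a+jc)$ with argument $-\tfrac12+\tfrac{(2n+3-j)q}{2}$. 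Because $(2n+3)q<1$, this argument lies in $(-\tfrac12,\tfrac12)$. It is a pole exactly when it equals $0$, that is, when $k=1/q=2n+3-j$ with $0\le j\le n-1$.
\end{itemize}
This is precisely $k\in\Z$ with $n+4\le k\le 2n+3$, at the single index $j=j_0$. Since $j_0$ is unique, the pole is simple. Otherwise $K_0(q)$ is finite and nonzero.

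\emph{Step 3: combine.} If the condition fails, $K_0$ is holomorphic and nonzero at $q$, so $\mathcal K_n$ has a simple zero there. If it holds, the simple pole times the simple zero gives a holomorphic product with value \eqref{eq:BC-exception-value}. This value is nonzero, which proves both directions of \eqref{eq:BC-exception-criterion}.

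\emph{Step 4: the sign.} The residue of $\Gamma\bigl(-\tfrac12+\tfrac{k}{2}s\bigr)$ at $s=q$ is $2/k>0$. The other factors $\Gamma(a+jc)$, $j\neq j_0$, have arguments decreasing in $j$. They are positive for $j<j_0$ and negative for $j_0<j\le n-1$, which gives $n-1-j_0$ negative factors. All other gamma factors are positive, so
\[
 \sgn\Res_{s=q}K_0=(-1)^{n-1-j_0}.
\]
Multiplying by $(-1)^{m+1}$ from Step 1 yields \eqref{eq:BC-exception-sign}.

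The main point needing care is Step 2. We must certify that no denominator gamma has a pole that could cancel the numerator pole, and that the bound $(2n+3)q<1$ really excludes every other pole; this holds because $(2n+3)(n-1)<2(n+1)(n+3)$. We must also make sure that the meromorphic continuation of \eqref{eq:Selberg} is the correct object for $K_0$ at these parameters, which I would take from \eqref{eq:singleton-endpoint} and \cite{FW}.
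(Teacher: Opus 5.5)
Your proposal is correct and follows the paper's proof essentially step for step. The simple zero of $F_1$ comes from the factor $\cos(M\theta-n\pi/2)$ in Proposition~\ref{prop:BC-meromorphic-product}, with all other trigonometric factors positive because $(n+3)q<1$. The possible pole of $K_0$ comes from the unique vanishing argument among the $\Gamma(a+jc)$, and the sign is fixed by counting the negative arguments $(j_0-j)/(2k)$ for $j>j_0$.

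One slip should be fixed. In Step~2, the bound that places these arguments in $(-\tfrac12,\tfrac12)$ is $(2n+3)q<2$, which is what your last paragraph actually verifies. The stated bound $(2n+3)q<1$ is false whenever a pole occurs, for example at $n=5$, $q=1/12$. If it held, every such argument would be negative and no pole could arise.
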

\begin{proof}
Write
\[
 A_n(\theta)=
 \frac{\sin((n+2)\theta)\prod_{j=2}^{n}\sin(j\theta)}
 {\cos((n+3)\theta)\prod_{j=1}^{n+1}\cos(j\theta)}.
\]
Since $(n+3)q\leq(n-1)/(n+1)<1$, all its displayed factors are finite and positive at $\theta_q=\pi q/2$. Moreover,
\[
 M\theta_q-n\pi/2=(1-\ell)\pi/2.
\]
Proposition~\ref{prop:BC-meromorphic-product} therefore gives the exact derivative
\begin{equation}\label{eq:BC-derivative}
 F_1'(q)=(-1)^{m+1}\frac{\pi M}{4}A_n(\theta_q)\ne0.
\end{equation}

The first numerator gamma arguments of the endpoint \eqref{eq:singleton-endpoint} are
\[
 a_j(q)=-\frac12+\frac{2n+3-j}{2}q,\qquad 0\leq j\leq n-1.
\]
They lie strictly between $-1/2$ and $1/2$. All other numerator and denominator arguments are positive: they are
\[
 1-\frac{j+4}{2}q,\quad 1-\frac{j+1}{2}q,\quad
 \frac12+\frac{n-j}{2}q,\quad 1-\frac q2.
\]
Thus $K_0$ has no zero and is regular unless exactly one $a_j(q)$ is zero. This happens precisely when $k=2n+3-j$ is an integer in $[n+4,2n+3]$. In that case $K_0$ has a simple pole, so it cancels the simple zero of $F_1$, giving \eqref{eq:BC-exception-value}. Among its other first gamma arguments exactly $n-1-j_0$ are negative, each in $(-1,0)$; its residue has sign $(-1)^{n-1-j_0}$. Combining with \eqref{eq:BC-derivative} proves \eqref{eq:BC-exception-sign}. If the endpoint is regular, both it and $A_n(\theta_q)$ are nonzero, so the zero of the product is simple.
\end{proof}

For the exceptional values, the residue of the single singular gamma factor gives
\begin{equation}\label{eq:BC-endpoint-residue}
 \Res_{s=q}K_0(s)=\frac{2}{k\,n!}
 \frac{\displaystyle
 \prod_{\substack{0\leq j\leq n-1\\j\ne j_0}}
       \Gamma\!\left(\frac{j_0-j}{2k}\right)
 \prod_{j=0}^{n-1}\Gamma\!\left(1-\frac{j+4}{2k}\right)
                    \Gamma\!\left(1-\frac{j+1}{2k}\right)}
 {\displaystyle
 \Gamma\!\left(1-\frac1{2k}\right)^n
 \prod_{j=0}^{n-1}\Gamma\!\left(\frac12+\frac{n-j}{2k}\right)}.
\end{equation}
The vanishing gamma argument has derivative $k/2$, which accounts for $2/k$. Together, \eqref{eq:BC-derivative} and \eqref{eq:BC-endpoint-residue} evaluate the nonzero chamber sum in gamma and trigonometric values.

\begin{corollary}[Finite exception list for a fixed shift]\label{cor:fixed-shift}
Fix an even integer $\ell\geq2$ and put $C_\ell=\ell(\ell+2)$. The exceptional ambient ranks in Theorem~\ref{thm:BC-complete-endpoint} are exactly
\begin{equation}\label{eq:fixed-shift-ranks}
 r=\ell+1+\frac{C_\ell}{e},\qquad e\mid C_\ell,\quad 1\leq e\leq\ell.
\end{equation}
At this rank, the sign of the continued nonterminal sum is
\begin{equation}\label{eq:fixed-shift-sign}
 \sgn\mathcal K_{r-2}(q)=(-1)^{\ell/2+e}.
\end{equation}
There are exactly $\tau(C_\ell)/2$ exceptional ranks, where $\tau$ is the positive-divisor counting function. The smallest is $2\ell+3$ and the largest is $\ell^2+3\ell+1$.
\end{corollary}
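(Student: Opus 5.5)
The plan is to restate the criterion \eqref{eq:BC-exception-criterion} of Theorem~\ref{thm:BC-complete-endpoint} in terms of the ambient rank $r=n+2$ and the shift $\ell$, and then solve a divisibility condition. With $n=r-2$ we have $M=(n+1)(n+3)=r^2-1$ and $q=(r-1-\ell)/(r^2-1)$. The standing hypothesis $2\leq\ell<n+1$ says exactly that $d:=r-1-\ell$ is a positive integer, and then $k=1/q=(r^2-1)/d$. The range $n+4\leq k\leq 2n+3$ becomes $r+2\leq k\leq 2r-1$.

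\emph{Divisibility.} Substitute $r=d+\ell+1$, so that
\[
 r^2-1=(d+\ell)(d+\ell+2)\equiv \ell(\ell+2)=C_\ell \pmod d .
\]
Hence $k\in\Z$ if and only if $d\mid C_\ell$. In that case
\[
 k=d+2\ell+2+C_\ell/d .
\]
Parametrize by the complementary divisor $e=C_\ell/d$. Then $d=C_\ell/e$, which gives $r=\ell+1+C_\ell/e$ as in \eqref{eq:fixed-shift-ranks}, and
\[
 k=e+2\ell+2+C_\ell/e .
\]

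\emph{Range conditions.} The lower bound $k\geq r+2$ reduces to $\ell+e-1\geq0$, which always holds. The upper bound $k\leq 2r-1=2\ell+1+2C_\ell/e$ reduces to $e+1\leq C_\ell/e$, that is, $e(e+1)\leq \ell(\ell+2)$. This inequality holds for $e=\ell$ and fails for $e=\ell+1$, since $(\ell+1)(\ell+2)>\ell(\ell+2)$. As $e(e+1)$ is increasing in $e$, the condition is equivalent to $e\leq\ell$. The resulting ranks satisfy $r\geq 2\ell+3\geq4$ and give $n\geq2$, so Theorem~\ref{thm:BC-complete-endpoint} applies to each of them. This proves \eqref{eq:fixed-shift-ranks}.

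\emph{Counting and extremes.} Since $C_\ell=(\ell+1)^2-1$ is not a perfect square, we have $\ell<\sqrt{C_\ell}<\ell+1$. The divisor involution $e\mapsto C_\ell/e$ therefore pairs each divisor $e\leq\ell$ with a divisor greater than $\ell$, with no fixed point. Hence exactly $\tau(C_\ell)/2$ divisors satisfy $e\leq\ell$, and distinct $e$ give distinct $r$. The choice $e=\ell$ gives the smallest rank $r=2\ell+3$, and $e=1$ gives the largest rank $r=\ell^2+3\ell+1$.

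\emph{Sign.} Here $j_0=2n+3-k$, so
\[
 n-j_0=k-n-3=k-r-1=\ell+e
\]
by the formula for $k$. With $m=\ell/2$ and $\ell$ even, \eqref{eq:BC-exception-sign} gives $(-1)^{\ell+e+m}=(-1)^{\ell/2+e}$, which is \eqref{eq:fixed-shift-sign}.

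The only step needing care is the upper-bound reduction to $e\leq\ell$, and this is elementary algebra, so no real obstacle is expected.
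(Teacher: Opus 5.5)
Your proposal is correct and follows essentially the same route as the paper. Both arguments substitute $d=n+1-\ell$, reduce $k\in\Z$ to $d\mid C_\ell$ via $k=d+2\ell+2+C_\ell/d$, and turn the range condition into $e\le\ell$; the paper writes this as $j_0\ge0\iff d(d-1)\ge C_\ell\iff d\ge\ell+2$, which is your $e(e+1)\le C_\ell$ in the other divisor. Both then pair divisors around $\sqrt{C_\ell}\in(\ell,\ell+1)$ and read off the sign from $n-j_0=\ell+e$.
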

\begin{proof}
Set $d=n+1-\ell>0$. Then
\[
 \frac1q=d+2\ell+2+\frac{C_\ell}{d},\qquad
 j_0=d-1-\frac{C_\ell}{d}.
\]
The endpoint condition is equivalent to $d\mid C_\ell$ and $d\geq\ell+2$. Indeed, the lower inequality $j_0\geq0$ is $d(d-1)\geq\ell(\ell+2)$, whose least integral solution is $d=\ell+2$; the upper inequality $j_0\leq n-1$ is then automatic. Writing $e=C_\ell/d$ gives exactly $e\mid C_\ell$ and $e\leq\ell$, and $r=n+2=d+\ell+1$. Substitution in \eqref{eq:BC-exception-sign} gives \eqref{eq:fixed-shift-sign}.

Finally, $\ell^2<C_\ell<(\ell+1)^2$. Its divisors occur in distinct complementary pairs, one not exceeding $\ell$ and the other at least $\ell+2$; $\ell+1$ does not divide $C_\ell=(\ell+1)^2-1$. Hence exactly half the divisors occur in \eqref{eq:fixed-shift-ranks}. Taking $e=\ell$ and $e=1$ gives the smallest and largest ranks.
\end{proof}

\begin{corollary}[The quadratic exceptions]\label{prop:B7}
For $\ell=2$, the only exceptional ambient ranks are $r=n+2=7$ and $11$. The continued nonterminal chamber sums satisfy
\[
 \mathcal K_5(1/12)<0,\qquad \mathcal K_9(1/15)>0.
\]
At all other ranks $r\geq4$ this quadratic chamber sum vanishes.
\end{corollary}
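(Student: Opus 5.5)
The plan is to specialize Corollary~\ref{cor:fixed-shift} to $\ell=2$ and then translate the resulting ranks, parameters and signs back into the language of Theorem~\ref{thm:BC-complete-endpoint}. No new analytic input should be needed: the corollary already classifies the exceptional ranks and their signs for every fixed even shift.

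First I check the range. For $r\geq4$ we have $n=r-2\geq2$, so the hypothesis $2\leq\ell<n+1$ of Theorem~\ref{thm:BC-complete-endpoint} holds with $\ell=2$, $m=1$ at every rank under consideration. Next, $C_2=\ell(\ell+2)=8$, whose divisors are $1,2,4,8$. The divisors with $1\leq e\leq\ell=2$ are $e=1$ and $e=2$, which agrees with $\tau(8)/2=2$. Formula~\eqref{eq:fixed-shift-ranks} then gives $r=3+8/e$, that is, $r=11$ for $e=1$ and $r=7$ for $e=2$. These are the largest value $\ell^2+3\ell+1=11$ and the smallest value $2\ell+3=7$ predicted by the corollary.

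Next I compute the parameters. With $M=(n+1)(n+3)$ and $q=(n-1)/M$, the value $n=5$ gives $q=4/48=1/12$, and $n=9$ gives $q=8/120=1/15$. As a direct check against the criterion \eqref{eq:BC-exception-criterion}: for $n=5$, $k=12\in[9,13]$ with $j_0=1$; for $n=9$, $k=15\in[13,21]$ with $j_0=6$. The signs come from \eqref{eq:fixed-shift-sign}, $(-1)^{\ell/2+e}=(-1)^{1+e}$. For $r=7$ ($e=2$) this is $-1$, so $\mathcal K_5(1/12)<0$; for $r=11$ ($e=1$) it is $+1$, so $\mathcal K_9(1/15)>0$. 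As a cross-check I would also evaluate \eqref{eq:BC-exception-sign}, $(-1)^{n-j_0+m}$. For $n=5$, $j_0=1$ this is $(-1)^{5}=-1$, and for $n=9$, $j_0=6$ it is $(-1)^{4}=+1$, so the two computations agree.

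For every other rank $r\geq4$, Theorem~\ref{thm:BC-complete-endpoint} says that $\mathcal K_n$ is holomorphic at $q$ and has a simple zero there, so the continued chamber sum vanishes. The only real obstacle is bookkeeping. I must make sure that the rank $r$ in the corollary is the ambient rank $n+2$, and that the sign in \eqref{eq:fixed-shift-sign} is taken with $e$ rather than $d=C_\ell/e$. The independent check through $j_0$ above is meant to guard against both mistakes.
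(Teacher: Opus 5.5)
Your proposal is correct and is essentially the paper's argument. The paper specializes the endpoint index directly to $j_0=n-2-8/(n-1)$, tests $n-1\in\{1,2,4,8\}$ (discarding $1,2$ because they give $j_0<0$), and reads the signs off \eqref{eq:BC-exception-sign}; you run the same divisor test through the restriction $e\le\ell$ of Corollary~\ref{cor:fixed-shift}, and your sign cross-check and your use of the simple zero in Theorem~\ref{thm:BC-complete-endpoint} for the remaining ranks are accurate.
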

\begin{proof}
For the endpoint index,
\[
 j_0=n-2-\frac8{n-1}.
\]
Thus $n-1$ must divide $8$. The divisors $1,2$ give negative indices, whereas $4,8$ give $(n,j_0)=(5,1),(9,6)$. Formula~\eqref{eq:BC-exception-sign} gives the two signs.
\end{proof}

\subsection{Poles of the reference period in type \texorpdfstring{$D$}{D}}
For type $D$, write $K_0^D$ for \eqref{eq:singleton-endpoint} with $\varepsilon=0$, and put $\mathcal K_n^D=K_0^DF_0$. Unlike $F_1$, the normalized factor $F_0$ does not vanish at the shifted values.

\begin{proposition}[Poles of the type-$D$ reference period]\label{prop:D-endpoint}
Let $n\geq2$, let $\ell=2m$ satisfy $2\leq\ell<n+1$, and put
\[
 q=\frac{n+1-\ell}{n^2+3n+1},\qquad k=1/q.
\]
The function $\mathcal K_n^D$ has a simple pole at $q$ precisely when
\begin{equation}\label{eq:D-endpoint-condition}
 k\in\Z,\qquad n+3\leq k\leq2n+2.
\end{equation}
Otherwise it is holomorphic and nonzero there. For a fixed even shift $\ell$, its exceptional ambient ranks are exactly
\begin{equation}\label{eq:D-fixed-shift}
 r=\ell+1+\frac{\ell(\ell+1)-1}{e},\qquad
 e\mid\ell(\ell+1)-1,\quad1\leq e\leq\ell-1.
\end{equation}
In particular, the only quadratic exception is $D_8$ at $1/11$.
\end{proposition}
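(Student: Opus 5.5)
The plan follows the proof of Theorem~\ref{thm:BC-complete-endpoint}, with one simplification: in type $D$ the connection factor never vanishes. By \eqref{eq:D-nonzero} of Theorem~\ref{thm:singleton}, $F_0(q)$ is finite and nonzero at every shifted value $q=q_0$. So the order of $\mathcal K_n^D=K_0^DF_0$ at $q$ is exactly the order of the reference period
\[
 K_0^D(s)=\frac1{n!}S_n\!\left(-\tfrac12+(n+1)s,\,1-2s,\,-s/2\right),
\]
and the task is to read off the gamma arguments of \eqref{eq:Selberg} at $s=q$.

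\textbf{Gamma arguments.} I will list the arguments in four groups.
\begin{itemize}
\item The first numerator arguments are $a_j(q)=-\tfrac12+\tfrac{2n+2-j}{2}q$ for $0\leq j\leq n-1$. I will show they lie strictly in $(-\tfrac12,\tfrac12)$. The lower bound is clear. The upper bound reduces to $(n+1)q<1$, which holds since $(n+1)(n-1)<n^2+3n+1$.
\item The other numerator arguments are $1-\tfrac{j+4}{2}q$ and $1-\tfrac{j+1}{2}q$. Both are positive because $(n+3)q<1$; this follows from $(n+3)(n-1)<n^2+3n+1$.
\item The denominator arguments simplify to $a+b+(n+j-1)c=\tfrac12+\tfrac{n-1-j}{2}q>0$.
\item The remaining denominator factor is $\Gamma(1-q/2)$, whose argument is positive.
\end{itemize}
Hence $K_0^D$ never vanishes at $q$. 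It has a pole only if some $a_j(q)=0$, and because the $a_j$ are distinct, at most one vanishes, which gives a simple pole. The condition $a_j(q)=0$ means $k=2n+2-j$ with $0\leq j\leq n-1$, which is exactly \eqref{eq:D-endpoint-condition}. Combined with $F_0(q)\neq0$, this proves the first two assertions.

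\textbf{Fixed shift.} I will parametrize as in Corollary~\ref{cor:fixed-shift}. Put $d=n+1-\ell\geq1$ and $C=\ell(\ell+1)-1$. Substituting $n=d+\ell-1$ gives
\[
 n^2+3n+1=d^2+(2\ell+1)d+C,\qquad\text{so}\qquad k=d+2\ell+1+\frac{C}{d}.
\]
Thus $k\in\Z$ if and only if $d\mid C$.
\begin{itemize}
\item The lower bound $k\geq n+3=d+\ell+2$ is automatic.
\item The upper bound $k\leq 2n+2=2d+2\ell$ is equivalent to $d(d-1)\geq C$. Since $\ell(\ell-1)<C$ and $(\ell+1)\ell>C$, this inequality holds exactly when $d\geq\ell+1$.
\end{itemize}
Set $e=C/d$. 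If $d\geq\ell+1$, then $e\leq C/(\ell+1)<\ell$, so $e\leq\ell-1$. Conversely, $e\leq\ell-1$ gives $d\geq C/(\ell-1)>\ell$. The exceptional rank is $r=n+2=d+\ell+1=\ell+1+C/e$, which is \eqref{eq:D-fixed-shift}.

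For $\ell=2$ we have $C=5$ and only $e=1$ is allowed. This gives $r=8$ and $n=6$, hence $q=1/(36+18+1)\cdot 5=5/55=1/11$, so the only quadratic exception is $D_8$ at $1/11$.

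\textbf{Main obstacle.} The only delicate point is confirming that no other gamma argument reaches a nonpositive integer at $q$. This is what guarantees that the pole is exactly simple and that $K_0^D(q)\neq0$ in the regular case. It rests on the two inequalities $(n+1)q<1$ and $(n+3)q<1$ above, both of which follow from $\ell\geq2$.
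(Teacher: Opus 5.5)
Your proposal is correct and follows the paper's proof essentially step for step. Both arguments use \eqref{eq:D-nonzero} to reduce to the gamma arguments of $K_0^D$, locate the unique possible vanishing argument $a_{j_0}$, and then use the parametrization $d=n+1-\ell$, $C=\ell(\ell+1)-1$ to obtain the divisor list. You also make explicit two points the paper leaves implicit: the inequalities $(n+1)q<1$ and $(n+3)q<1$, and the equivalence $d(d-1)\geq C\iff d\geq\ell+1$ (the paper phrases the latter through $j_0=d-1-C/d\geq 0$).
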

\begin{proof}
The first numerator gamma arguments of $K_0^D$ are
\[
 a_j=-\frac12+\frac{2n+2-j}{2}q,\qquad0\leq j\leq n-1.
\]
They lie in $(-1/2,1/2)$. All remaining numerator and denominator gamma arguments are positive; the denominator arguments depending on $j$ are $1/2+(n-j-1)q/2$. Thus a pole occurs only when a unique $a_{j_0}$ is zero, equivalently $j_0=2n+2-k\in\{0,\ldots,n-1\}$. It is simple, with reciprocal derivative $2/k$ in that gamma factor. The factor $F_0(q)$ is finite and nonzero by \eqref{eq:D-nonzero}, so it neither removes nor raises this pole.

For the fixed-shift form, set $d=n+1-\ell$ and $C=\ell(\ell+1)-1$. Then
\[
 k=d+2\ell+1+C/d,\qquad j_0=d-1-C/d.
\]
Condition \eqref{eq:D-endpoint-condition} is equivalent to $d\mid C$ and $d\geq\ell+1$. Writing $e=C/d$ gives exactly \eqref{eq:D-fixed-shift}. For $\ell=2$, $C=5$ and the only permitted small divisor is $e=1$, yielding $r=8$.
\end{proof}

The same exceptional ranks can be read directly from coincident parabolic candidates. Let a rank-$R$ system have a connected normal complement of the same classical type and rank $L$. Its degree-zero outer candidate is
\[
 \frac{R-L}{R(R-1)-L(L-1)}=\frac1{R+L-1}
 \quad\text{in type $D$},
 \qquad
 \frac{R-L}{R^2-L^2}=\frac1{R+L}
 \quad\text{in types $B,C$}.
\]
Equating this value with the degree-$\ell$ singleton candidate in rank $L$ and clearing denominators gives
\begin{equation}\label{eq:singleton-reflection}
 (L-\ell-1)(R-\ell-1)=
 \begin{cases}
  \ell(\ell+1)-1,&D,\\
  \ell(\ell+2),&B,C.
 \end{cases}
\end{equation}
The small divisor $e$ in \eqref{eq:D-fixed-shift} or \eqref{eq:fixed-shift-ranks} gives $L=\ell+1+e$. Thus the divisor lists classify precisely the candidate coincidences. A coincidence alone does not prove a pole; the complete Laurent coefficient must still be evaluated.

\Needspace{13\baselineskip}
\subsection{The quadratic insertion}\label{sec:quadratic-insertion}
After centering the normal gap, the quadratic Taylor coefficient contributes the following factor to the Selberg density. Put $y_i=(1-x_i)^{-1}$ and $p_j=\sum_i y_i^j$:
\[
 H_\varepsilon=\varepsilon q/4+q p_2-q p_1/2,
\]
where $\varepsilon=1$ for types $B,C$ and $\varepsilon=0$ for type $D$.

\begin{proposition}[Singleton quadratic insertion]\label{prop:singleton-insertion}
For the density $W$ in \eqref{eq:mixed-cube}, evaluated at $s=q$, define
$\nabla_iP=\partial_iP+P\partial_i\log W$ and
\begin{equation}\label{eq:singleton-divergence}
 P_i=x_i(A+By_i+Cp_1),
\end{equation}
where
\[
 A=\frac{1-n+q\{n^2+3n+6+\varepsilon(n+4)\}}{20(2q+1)},
 \quad B=\frac{4q}{5(2q+1)},\quad C=\frac{q}{5(2q+1)}.
\]
Then
\[
 H_\varepsilon-\kappa_\varepsilon=\sum_i\nabla_iP_i,
 \qquad
 \kappa_\varepsilon=\frac{\varepsilon q}{4}
 -\frac n2\{1-(n+3+\varepsilon)q\}A.
\]
At the quadratic candidate,
\begin{align}
 \kappa_{BC}&=\frac{(n-1)(n+2)(3n+5)}{40(n+1)(n+3)(n^2+6n+1)}>0,\label{eq:kappa-BC}\\
 \kappa_D&=-\frac{n(n-1)(n+4)}{8(n^2+3n+1)(n^2+5n-1)}<0.\label{eq:kappa-D}
\end{align}
Consequently, the projective period of the insertion is $\kappa_\varepsilon$ times the corresponding uninserted chamber period.
\end{proposition}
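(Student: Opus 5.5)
The plan follows the pattern of Proposition~\ref{prop:common-insertion}: verify a rational divergence identity at independent Selberg parameters, and only then specialize.

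\textbf{Step 1: the divergence at independent parameters.} Write $W=\prod_i x_i^{a-1}|1-x_i|^{b-1}|\Delta(x)|^{2c}$. Then
\[
\partial_i\log W=\frac{a-1}{x_i}+(b-1)y_i+2c\sum_{j\neq i}\frac{1}{x_i-x_j},
\]
where we use $\partial_i\log|1-x_i|=-1/(1-x_i)=-y_i$, so the second term carries the sign of $b-1$. I will compute $\sum_i\nabla_i$ separately for the three fields $x_i$, $x_iy_i$ and $x_ip_1$. The key rewriting is $x_i=1-1/y_i$, together with $x_iy_i=y_i-1$.

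\textbf{Step 2: the three field contributions.} For the field $x_i$, the divergence is $n+(a-1)n+(b-1)(n-p_1)+cn(n-1)$. This follows from $x_iy_i=y_i-1$ and $\sum_{i\ne j}x_i/(x_i-x_j)=n(n-1)/2$.

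For the field $x_iy_i=y_i-1$, the self term is $\partial_iy_i=y_i^2$. The boundary terms give $(a-1)(y_i-1)/x_i=(a-1)y_i$ and $(b-1)(y_i^2-y_i)$. The pair terms use $y_i-y_j=(x_i-x_j)y_iy_j$, so the symmetrized interaction is $c\sum_{i\ne j}y_iy_j=c(p_1^2-p_2)$ up to the affine correction from the $-1$ part, which reduces to the previous case.

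For the field $x_ip_1$, the derivative of $p_1$ contributes $x_iy_i^2$. The pair sum uses the same symmetrization with $x_i$ in place of $y_i$, giving $p_1\cdot n(n-1)/2$.

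In each case the result is a polynomial in $p_1$, $p_2$ and $p_1^2$, with no higher-degree terms.

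\textbf{Step 3: solve for the coefficients.} Form $A\cdot(\text{field }x)+B\cdot(\text{field }xy)+C\cdot(\text{field }xp_1)$ and require the coefficients of $p_2$, $p_1$ and $p_1^2$ to be $q$, $-q/2$ and $0$, respectively. This is a linear system for $A,B,C$, and I expect it to reproduce the displayed values once we substitute $a=a_\varepsilon(q)$, $b=1-2q$, $c=-q/2$. The constant term $\varepsilon q/4$ is then matched by $\kappa_\varepsilon$, which yields the stated formula $\kappa_\varepsilon=\varepsilon q/4-\tfrac n2\{1-(n+3+\varepsilon)q\}A$. The closed forms \eqref{eq:kappa-BC}--\eqref{eq:kappa-D} then follow by inserting $q=(n-1)/((n+1)(n+3))$ and $q=(n-1)/(n^2+3n+1)$, respectively. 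Their signs are then evident for $n\ge2$.

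\textbf{Step 4: integration.} As in Proposition~\ref{prop:common-insertion}, I keep $a,b,c$ independent and integrate the divergence over each regularized loaded chamber $(0,1)^p\times(1,\infty)^{n-p}$, where an exact twisted form has zero period. Meromorphic continuation then gives the period identity at $s=q$. This holds coefficientwise, and also after summation against the chamber weights.

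\textbf{Main obstacle.} The hard part is the bookkeeping in Steps 2--3, in particular checking that the $p_1^2$ terms from the $y$-field and the $p_1$-field cancel exactly. I would first verify the identity for $n=1$ and $n=2$ symbolically, to fix signs, before trusting the general coefficient matching. A secondary concern is that at $q_1$ the reference period $K_0$ may have a pole (ranks $7$ and $11$). There the identity should be read as an equality of meromorphic functions, continued before specializing, rather than as an equality of values.
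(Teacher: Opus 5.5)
Your proposal follows the paper's proof. You expand $\sum_i\nabla_iP_i$ for the three fields into $\{C+B(2-b-c)\}p_2+\{C(1-b)+cB\}p_1^2+\{B(a+b-2)-(b-1)A+C(nh-1)\}p_1+nAh$, with $h=a+b-1+c(n-1)$, match this against $H_\varepsilon$ to get $A,B,C$ and $\kappa_\varepsilon=\varepsilon q/4-nAh$, and integrate the exact twisted differential on regularized loaded chambers before continuing to $s=q$. One sign needs fixing: $\partial_i\log W$ contains $-(b-1)y_i$, not $+(b-1)y_i$, so the boundary term of the field $x_iy_i$ is $-(b-1)(y_i^2-y_i)$ (you already used the correct sign for the field $x_i$); with that sign the field $x_ip_1$ contributes $(1-b)p_1^2=2q\,p_1^2$, the $p_1^2$ equation forces $B=4C$, and the triangular system returns exactly the displayed $A,B,C$ without using the candidate value of $q$.
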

\begin{proof}
Write
\[
 \alpha=\frac{1-\varepsilon q}{2},\qquad
 \beta=1-2q,\qquad \gamma=-\frac q2,
 \qquad h=\alpha+\beta-1+\gamma(n-1).
\]
Expanding $\sum_i\nabla_iP_i$ in the symmetric functions $p_1,p_2$ gives
\begin{align*}
 \sum_i\nabla_iP_i={}&
 \{C+B(2-\beta-\gamma)\}p_2
 +\{C(1-\beta)+\gamma B\}p_1^2\\
 &+\{B(\alpha+\beta-2)-(\beta-1)A+C(nh-1)\}p_1+nAh.
\end{align*}
Substitution of the displayed values of $A,B,C$ reduces the three coefficients to $q,0,-q/2$, respectively. Since $h=\{1-(n+3+\varepsilon)q\}/2$, the constant term is $\varepsilon q/4-\kappa_\varepsilon$. This proves the pointwise identity. After multiplication by the density, its right-hand side is an exact twisted differential. For generic parameters each loaded chamber admits the regularization described in Section~\ref{sec:integral-conventions}, so the period of this differential is zero. Meromorphic continuation of the complete identity gives the displayed period formula at the shifted parameters; no common domain of ordinary convergence for the unbounded chambers is required.
\end{proof}
Since $\kappa_{BC}\ne0$, cancellation in the $B/C$ calculation comes from the chamber sum, not from the insertion itself.

\section{Quadratic residues in types \texorpdfstring{$B$}{B}, \texorpdfstring{$C$}{C}, and \texorpdfstring{$D$}{D}}\label{sec:actual-singletons}
We now use the chamber evaluations to prove Theorems~\ref{thm:D-family} and~\ref{thm:BC-family}. We first list every support at the candidate value, then compute its coefficient with the original lattice measure.

\subsection{The contributing supports}

\begin{lemma}[Complete quadratic singleton support list]\label{lem:singleton-census}
Every support with candidate value $q_r^D$ has a singleton normal complement and Taylor degree two, except at $D_8$. There is then one additional degree-zero complement: the connected $D_4$ subdiagram containing the fork. This case has four flags of length two.

Every support with candidate value $q_r^{BC}$ likewise has a singleton complement and degree two, except in ranks seven and eleven. Rank seven also has the connected $B_5$ or $C_5$ complement containing the terminal node; rank eleven has the corresponding $B_4$ or $C_4$ complement. Their Taylor degree is zero. The two cases have five and four flags of length two, respectively. No flag at these candidates has length three.
\end{lemma}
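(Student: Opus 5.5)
The plan is to copy the divisibility argument of Theorem~\ref{thm:complete-A}, with the root counts of $D_r$ and of $B_r,C_r$ in place of those of $A_r$. Write $j=|J|$, $N_J=|\Psi_J^+|$ and $u=r-3$. The candidate equation \eqref{eq:count} then reads
\[
 \frac{r-j-\ell}{N-N_J}=\frac{u}{M},
\]
where $N=r(r-1)$ and $M=N-1$ in type $D$, and $N=r^2$ and $M=N-1$ in types $B,C$.

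\textbf{Reduction to a finite list.} The key identities are
\[
 M=u(u+5)+5\quad(D),\qquad M=u(u+6)+8\quad(B,C).
\]
Hence $g=\gcd(u,M)$ divides $5$ in type $D$ and divides $8$ in types $B,C$. As before, $N-N_J=tM/g$ and $r-j-\ell=tu/g$ for some positive integer $t$. The bound $N(S)\le N=M+1<2M/g$ forces $t\le g$.

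If $t=g$, then $N_J=1$. The only parabolic subsystem with one positive root is a single node, so $j=1$ and $\ell=2$. These are the singleton supports, and every node of the diagram gives one.

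If $t<g$, put $x=(g-t)/g\ge 1/g$. Then $N_J=1+xM$ and $j+\ell=3+xu$. I bound $N_J$ by the largest positive-root count of a parabolic subsystem of rank at most $j$. This count is $j(j-1)$ in type $D$ (for $j\ge3$; the type-$A$ count $j(j+1)/2$ is never larger there) and $j^2$ in types $B,C$. Expanding, the linear terms in $u$ cancel exactly as in type $A$, and one obtains
\[
 x(1-x)u^2\le 5(1-x)\ \text{in type }D,\qquad x(1-x)u^2\le 8(1-x)\ \text{in types }B,C.
\]
So $xu^2\le5$, respectively $xu^2\le8$.

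\textbf{Checking the surviving cases.} In type $D$, $g=5$ forces $5\mid u$, while $xu^2\le5$ with $x\ge1/5$ forces $u\le5$. So $u=5$, that is $r=8$, with $x=1/5$. This gives $N_J=12$ and $j+\ell=4$. A rank-$4$ parabolic subsystem with $12$ positive roots must be $D_4$, since $A_4$ has only $10$. So $\ell=0$, and $J$ is the unique $D_4$ subdiagram containing the fork.

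In types $B,C$, the condition $g>1$ together with $xu^2\le8$ leaves $u\in\{2,4,8\}$; for $u=1,3,5,7$ one has $g=1$, and $u=6$ fails the inequality. I then check each admissible $x$:
\begin{itemize}
\item $u=2$ ($r=5$): $x=1/2$ requires a rank-$\le4$ parabolic subsystem of $B_5$ or $C_5$ with $13$ roots. The possible counts are $16,12,10,\dots$, so none exists.
\item $u=4$ ($r=7$): $x=1/4$ again requires $13$ roots, which is impossible. $x=1/2$ gives $N_J=25$ and $j=5$, forcing the terminal $B_5$ or $C_5$ with $\ell=0$.
\item $u=8$ ($r=11$): only $x=1/8$ is allowed. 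It gives $N_J=16$ and $j+\ell=4$, forcing the terminal $B_4$ or $C_4$ with $\ell=0$.
\end{itemize}
I expect this step to be the main obstacle. It needs a careful enumeration of the possible positive-root counts of reducible parabolic subdiagrams of $B_r$, $C_r$ and $D_r$, to exclude coincidences such as $13$ and to confirm that the maximum is attained only by the connected subdiagram of the same type containing the special end. For small $j$ I would simply list the partitions of the $j$ nodes into connected pieces.

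\textbf{Counting flags.} A length-two flag is a pair $U\subsetneq U\cup T$, where $U$ has the degree-zero complement and $T$ is a singleton complement inside it. Equivalently, the flag is determined by choosing one node of the connected exceptional complement. This gives $4$ flags for $D_8$, $5$ flags for rank $7$, and $4$ flags for rank $11$. At each candidate only two support sizes occur, namely $r-1$ and $r-j$. A strict chain of length three is therefore impossible.
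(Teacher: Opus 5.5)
Your proposal is correct and follows the paper's proof essentially step for step: the same reduction through $g=\gcd(u,M)$, the same inequalities $xu^2\le5$ and $xu^2\le8$, the same surviving cases, and the same flag count obtained by choosing one node of the exceptional complement. There are two harmless slips: the bound giving $t\le g$ should read $N(S)\le M+1<(g+1)M/g$, i.e.\ $M/g>1$ (your $M+1<2M/g$ fails for $g\ge2$), and $12$ is not the root count of any rank-four parabolic of $B_r$ or $C_r$ (the counts are $16$ for the terminal $B_4/C_4$ and at most $10$ otherwise), though this does not affect the exclusion of $13$.
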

\begin{proof}
Put $u=r-3$. In type $D$, the total root count is $N=r(r-1)$ and $M=N-1=u^2+5u+5$. In types $B,C$, $N=r^2$ and $M=N-1=u^2+6u+8$. Let a competing candidate have normal rank $j$, normal root count $N_J$, and Taylor order $a\geq0$. If $g=\gcd(u,M)$, the equality of candidates implies
\[
 N-N_J=tM/g,\qquad r-j-a=tu/g,
 \qquad1\leq t\leq g.
\]
The last bound follows from $N=M+1$ and $M/g>1$. For $t=g$, necessarily $N_J=1$, so $J$ is a singleton and $a=2$.

For $t<g$, write $x=(g-t)/g$. Then
\[
 N_J=1+xM,\qquad j+a=3+xu.
\]
In type $D$, $g\mid5$. Such a competing subsystem has $N_J\geq12$ and hence $j\geq4$. A parabolic subsystem of $D_r$ of rank $j\geq3$ has at most $j(j-1)$ positive roots: its components are of types $A$ and at most one $D$, and the bound follows by merging their rank counts. Thus
\[
 1+x(u^2+5u+5)\leq(3+xu)(2+xu),
 \qquad xu^2\leq5.
\]
Since $x\geq1/5$, $u\leq5$. The only case with $g>1$ is $u=5$, and only $x=1/5$ survives the inequality. It gives $r=8$, $j\leq4$, $N_J=12$, hence precisely $J=D_4$ and $a=0$.

For $B,C$, $g\mid8$ and $N_J\leq j^2$, so
\[
 1+x(u^2+6u+8)\leq(3+xu)^2,
 \qquad xu^2\leq8.
\]
Now $u\leq8$. The surviving divisibility tests are
\[
\begin{array}{c|c|c|c}
 r&x&N_J&j\text{ at most}\\\hline
 5&1/2&13&4\\
 7&1/4&13&4\\
 7&1/2&25&5\\
 11&1/8&16&4.
\end{array}
\]
The first two rows are impossible: a rank-four $B/C$ parabolic is either the connected terminal subdiagram, with sixteen positive roots, or has at most ten. The last two force the connected tail of rank five and four, respectively, and $a=0$.

A tail of the stated orthogonal type has only one placement in the Dynkin diagram. An outer support is contained in a singleton-complement support precisely when the omitted singleton lies in that tail. The chain counts are therefore four, five, and four. There are at most two support sizes, so no chain has length three.
\end{proof}

\subsection{The type-\texorpdfstring{$D$}{D} wall integrals}\label{sec:D-normalization}
We compute the quadratic period with its lattice normalization. In this subsection set $n=r-2$, $M=n^2+3n+1$, $q=q_r^D=(n-1)/M$, and
\[
 \alpha_0=-\frac12+(n+1)q,\qquad
 \kappa_D=-\frac{n(n-1)(n+4)}{8(n^2+3n+1)(n^2+5n-1)}.
\]
For $r\ne8$, the residue is
\begin{equation}\label{eq:D-singleton-residue}
 \boxed{\displaystyle
 R_r^D=\frac{\zeta(q-2)}{M}\,2^{2-n-q}\kappa_D
 \frac{S_n(\alpha_0,1-2q,-q/2)}{n!}\,F_0(q).}
\end{equation}
Here $F_0(q)$ is the sine product \eqref{eq:D-nonzero} with $m=1$.

To prove \eqref{eq:D-singleton-residue}, use the orthogonal coordinates
\[
 t_1>\cdots>t_{r-1}>|t_r|,\qquad
 P_D(t)=\prod_{i<j}(t_i^2-t_j^2).
\]
The map from $t$ to the simple-root gaps has absolute determinant two. The two fork walls are $t_{r-1}=t_r$ and $t_{r-1}=-t_r$; the other walls are $t_i=t_{i+1}$ for $1\leq i\leq r-2$. At a difference wall write $t_i=c+a/2$, $t_{i+1}=c-a/2$, and let $v_j$ be the remaining coordinates. Removing the normal factor $a$ gives the exterior polynomial
\begin{equation}\label{eq:D-wall-polynomial}
 Q(c,v)=2c\prod_{j=1}^{n}(c^2-v_j^2)^2
              \prod_{j<k}(v_j^2-v_k^2),
\end{equation}
with absolute values on each chamber. Its centered quadratic coefficient is
\[
 Q^{-s}\frac{s}{c^2}\sum_j
 \left(\frac1{(1-v_j^2/c^2)^2}-\frac1{2(1-v_j^2/c^2)}\right).
\]
The comparison with the actual normal coordinate is the single-parameter version of Lemma~\ref{lem:centering}. Its primitive is again \eqref{eq:projective-primitive}, with one normal gap and exterior dimension $r-1$. The remaining simple roots supply the adjacent-gap forms; the root whose simple-root coefficients are all one supplies their total sum. Thus the independent-exponent convergence argument applies on these walls too.

Fix $c=1$ and put $x_j=v_j^2$. Before inversion, the mixed-chamber parameters are $(1/2,1-2q,-q/2)$. The normalization at a nonfork wall is the product
\[
\begin{aligned}
 &2\;\text{(simple-root Jacobian)}
 \times 2\;\text{(two signs of the last coordinate)}\\
 &\qquad\times 2^{-n}\;\text{(squaring Jacobians)}
 \times 2^{-q}\;\text{(the factor $2c$ in $Q$)}
 =2^{2-n-q}.
\end{aligned}
\]
The nonfork walls give the mixed chambers $p=n,n-1,\ldots,1$. Each of the two fork walls contributes half of the same normalization at $p=0$; together they give exactly one $p=0$ chamber. Therefore the sum of the uninserted wall periods is
\[
 2^{2-n-q}K_0^D(q)F_0(q)
 =2^{2-n-q}\frac{S_n(\alpha_0,1-2q,-q/2)}{n!}F_0(q),
\]
where the equality is \eqref{eq:singleton-endpoint} after inversion of the reference chamber. Proposition~\ref{prop:singleton-insertion} replaces the insertion by $\kappa_D$. Summing over the original normal gap gives $\zeta(s-2)$, and the diagonal denominator has slope $M$. This proves \eqref{eq:D-singleton-residue} whenever Lemma~\ref{lem:singleton-census} leaves only the singleton sources.

\subsection{The difference walls in types \texorpdfstring{$B$}{B} and \texorpdfstring{$C$}{C}}\label{sec:BC-normalization}
In this subsection reset $n=r-2$ and $q=q_r^{BC}=(r-3)/(r^2-1)$. A nonterminal wall is a difference wall $t_i=t_{i+1}$, $1\leq i<r$; the terminal wall is $t_r=0$. In the convention \eqref{eq:def-xi}, positive orthogonal coordinates satisfy $t_1>\cdots>t_r>0$, and
\[
 P_C(t)=\prod_i t_i\prod_{i<j}(t_i^2-t_j^2),\qquad P_B(t)=2^rP_C(t).
\]
For $C$, the simple-coroot gaps are $m_i=t_i-t_{i+1}$ for $i<r$ and $m_r=t_r$, with Jacobian one. For $B$ they have $m_r=2t_r$, with Jacobian two. At a difference wall set $t_i=c+a/2$, $t_{i+1}=c-a/2$. Removing the normal factor $a$ and setting $a=0$ leaves
\[
 Q_C(c,v)=2c^3\prod_{j=1}^n v_j
          \prod_j(c^2-v_j^2)^2\prod_{j<k}(v_j^2-v_k^2),\qquad
 Q_B=2^rQ_C.
\]
The centered quadratic coefficient is $Q_C^{-s}c^{-2}H_1$, with $H_1$ from Section~\ref{sec:quadratic-insertion}. Fixing $c=1$ and squaring the remaining positive coordinates gives the parameters $((1-q)/2,1-2q,-q/2)$ and the common factor $2^{-n-q}$. Each of the $r-1=n+1$ difference walls contributes one of $K_n,K_{n-1},\ldots,K_0$, once. Therefore the complete nonterminal projective coefficients are
\begin{equation}\label{eq:BC-actual-sum}
 \cA_{C,\mathrm{nt}}=2^{-n-q}\kappa_{BC}\mathcal K_n(q),\qquad
 \cA_{B,\mathrm{nt}}=2^{1-rq}\cA_{C,\mathrm{nt}},
\end{equation}
where $\mathcal K_n(q)$ is the value of the continued product $K_0F_1$, as in Theorem~\ref{thm:BC-complete-endpoint}. At a singular endpoint it is not the product of two separately specialized values. The normal lattice factor $\zeta(q-2)$ and the radial factor $1/(r^2-1)$ are the same for all these nonterminal walls. These common factors show that the chamber cancellation has the required wall multiplicities.

\subsection{The terminal wall in types \texorpdfstring{$B$}{B} and \texorpdfstring{$C$}{C}}
After removal of its normal root factor, the exterior density, apart from a nonzero constant, is
$W=\prod_i t_i^{-3s}\prod_{i<j}(t_i^2-t_j^2)^{-s}$. Grouping the two contributions from each pair $i,j$ gives
\begin{equation}\label{eq:BC-terminal-quadratic}
 s\sum_i t_i^{-2}
 =-\frac{s}{1+3s}\sum_i
 \left(\partial_i(t_i^{-1})+t_i^{-1}\partial_i\log W\right).
\end{equation}
After multiplication by $W$, the right-hand side is an exact twisted differential. For generic parameters its period on the regularized loaded chamber is zero; meromorphic continuation of the entire identity therefore gives zero at the critical exponent.

\subsection{Completion of the pole and holomorphy proof}
\begin{proof}[Proof of Theorems~\ref{thm:D-family} and~\ref{thm:BC-family}]
First take $q=q_r^D$. For $r\ne8$, Proposition~\ref{prop:D-endpoint} shows that the Selberg endpoint in \eqref{eq:D-singleton-residue} is finite and nonzero. The scalar $\kappa_D$ and the sine product are nonzero, and $\zeta(q-2)<0$. Lemma~\ref{lem:singleton-census} excludes all other target supports and higher-order terms. Hence the point is a simple pole.

The only possible negative gamma factors of the endpoint have arguments
$-1/2+(2n+2-j)q/2$, $0\leq j\leq n-1$. Their sign changes at
\[
 j_*=n-2-\frac5{n-1}.
\]
There are two negative factors for $n=2$ and for $n\geq7$, and three for $n=3,4,5$. The omitted case $n=6$ is precisely the resonant case $D_8$. All other gamma factors are positive. The product $\zeta(q-2)\kappa_DF_0(q)$ is positive, proving the residue signs in the theorem.

At $D_8$, the additional normal complement is the $D_4$ subdiagram containing the fork. The outer support is $\{1,2,3,4\}$, with count $44$; the four singleton supports in that complement have root count $11$. Their cumulative multiplier is $11/(44\cdot55)=1/220$. On the outer face, the restricted polynomial is
\[
 \prod_{i=1}^4 y_i^8\prod_{i<j\leq4}(y_i^2-y_j^2),
 \qquad y_1>y_2>y_3>y_4>0.
\]
Fixing $y_1=1$ and squaring the other three variables gives the ordinary positive period
\[
 \frac1{48}S_3(3/22,10/11,-1/22).
\]
The product with $R_4^D>0$ and the cumulative multiplier proves \eqref{eq:D8-main}. All four target flags have been included and no longer chain exists. Thus the pole order is exactly two.

Now take $q=q_r^{BC}$. Outside ranks $7,11$, Corollary~\ref{prop:B7} gives $\mathcal K_n(q)=0$ with a regular endpoint. Equation~\eqref{eq:BC-actual-sum} and the terminal identity \eqref{eq:BC-terminal-quadratic} therefore make the complete quadratic coefficient zero. Lemma~\ref{lem:singleton-census} then proves holomorphy. At rank seven the only length-two terms come from the complementary $B_5/C_5$ coefficient at $1/12$; at rank eleven they come from the complementary $B_4/C_4$ coefficient at $1/15$.

The nonexceptional part of the argument has already proved that these lower-rank complementary residues vanish. Every potentially contributing support in the complement has the same root count ($24$ in rank five and $15$ in rank four). Lemma~\ref{lem:two-step}, including its zero-residue case, consequently makes the aggregate double coefficient zero. The remaining pole order is at most one; its simple coefficient is not determined by this argument.
\end{proof}

For an explicit base value in \eqref{eq:D8-main}, set $h=\pi/22$. Formula~\eqref{eq:D-singleton-residue} gives
\begin{equation}\label{eq:D4-base}
 R_4^D=\frac{\zeta(-21/11)}{11}\,2^{-1/11}
 \left(-\frac3{286}\right)
 \frac{S_2(-5/22,9/11,-1/22)}2
 \frac{\sin h\,\sin^2(2h)}{4\prod_{j=1}^4\cos(jh)}>0.
\end{equation}
Thus every factor in the double-pole coefficient \eqref{eq:D8-main} is explicitly evaluated.

\Needspace{12\baselineskip}
\section{Conclusion}
Theorem~\ref{thm:main} gives a shifted pole at a positive real point in every type-$A$ rank $r\geq5$, with double poles only at $A_{12}$ and $A_{20}$. Theorem~\ref{thm:D-family} gives the corresponding type-$D$ family, with its only double pole at $D_8$. The leading coefficients at all three resonant ranks are explicit. Theorem~\ref{thm:BC-family} proves that the analogous quadratic candidates in types $B$ and $C$ are removable outside ranks $7$ and $11$; at those ranks the possible double terms also cancel.

The remaining simple coefficients at $B_7,C_7,B_{11},C_{11}$ are not determined here. The higher-even-degree results classify continued chamber sums, not the full Witten coefficients: the normal insertion and every incident support must still be included. These are the natural remaining cases.

\appendix
\section{Verification scope}\label{app:verification}
The ancillary programs perform exact symbolic checks of the finite sine, divergence, and $q$-difference identities, together with root and support enumerations in the stated finite ranges. Separate high-precision calculations compare equivalent formulas for the three double-pole coefficients; these comparisons are diagnostic, not proofs of nonvanishing.

Run \path{anc/verify_main.py} with Python for the exact checks; pass \texttt{--diagnostics} to add the numerical comparisons. These programs verify finite algebra and enumerations. They do not prove the residue theorem quoted in Section~\ref{sec:local}, nor do they replace the meromorphic-continuation and Stokes arguments in the text.

\end{document}